\providecommand{\tabularnewline}{\\}
\numberwithin{equation}{section}
\numberwithin{figure}{section}
\theoremstyle{plain}
\newtheorem{thm}{\protect\theoremname}
\theoremstyle{plain}
\newtheorem{prop}[thm]{\protect\propositionname}
\theoremstyle{remark}
\newtheorem{rem}[thm]{\protect\remarkname}
\theoremstyle{plain}
\newtheorem{lem}[thm]{\protect\lemmaname}
\theoremstyle{definition}
\newtheorem{defn}[thm]{\protect\definitionname}
\theoremstyle{plain}
\newtheorem{cor}[thm]{\protect\corollaryname}
\theoremstyle{definition}
\newtheorem{example}[thm]{\protect\examplename}
\numberwithin{thm}{section}
\providecommand{\corollaryname}{Corollary}
\providecommand{\definitionname}{Definition}
\providecommand{\examplename}{Example}
\providecommand{\lemmaname}{Lemma}
\providecommand{\propositionname}{Proposition}
\providecommand{\remarkname}{Remark}
\providecommand{\theoremname}{Theorem}
\begin{document}
\title{The enumeration of extreme rigid honeycombs}
\author{Hari Bercovici and Wing Suet Li}
\address{Mathematics Department\\
Indiana University\\
Bloomington, IN 47405}
\email{bercovic@indiana.edu}
\address{School of Mathematics\\
Georgia Institute of Technology\\
Atlanta, GA 30332}
\email{li@math.gatech.edu}
\thanks{WSL was supported in part by Simons Foundation collaborative grant
416045.}
\begin{abstract}
Rigid tree honeycombs were introduced by Knutson, Tao, and Woodward
\cite{KTW} and they were shown in \cite{bcdlt} to be sums of extreme
rigid honeycombs, with uniquely determined summands up to permutations.
Two extreme rigid honeycombs are essentially the same if they have
proportional \emph{exit multiplicities} and, up to this identification,
there are countably many equivalence classes of such honeycombs. We
describe two ways to approach the enumeration of these equivalence
classes. The first method produces a (finite) list of all rigid tree
honeycombs of fixed \emph{weight} by looking at the \emph{locking
patterns} that can be obtained from a certain quadratic Diophantine
equation. The second method constructs arbitrary rigid tree honeycombs
from \emph{rigid overlays} of two rigid tree honeycombs with strictly
smaller weights. This allows, in principle, for an inductive construction
of all rigid tree honeycombs starting with those of unit weight. We
also show that some rigid overlays of two rigid tree honeycombs give
rise to an infinite sequence of rigid tree honeycombs of increasing
complexity but with a fixed number of nonzero exit multiplicities.
This last result involves a new inflation/deflation construction that
also produces other infinite sequences of rigid tree honeycombs.
\end{abstract}

\maketitle

\section{Introduction \label{sec:Introduction}}

The notion of a \emph{rigid honeycomb} was introduced in \cite{KTW}
in relation to the Horn problem. It also played a central role in
proving that the Horn inequalities hold in an arbitrary factor of
type II$_{1}$ in \cite{bcdlt}, where it was shown that every rigid
honeycomb arises as a special kind of \emph{overlay} (more general
than the \emph{clockwise overlays }of \cite{KTW}) of \emph{extreme}
rigid honeycombs. Extreme rigid honeycombs are scalar multiples of
the rigid tree measures (which we now call \emph{rigid tree honeycombs})
studied in \cite{blt}.

In retrospect, the results of \cite{bcdlt} depended on the study
of these extreme rigid honeycombs, and one of the initial difficulties
was the lack of examples of sufficiently complicated such honeycombs,
which in turn hampered effective experimentation. (Producing just
six such honeycombs of weight four required an extensive search; an
intersection problem generated by one of these six examples is fully
analyzed at the end of \cite[Section 6]{bcdlt}.) Subsequently, several
infinite families of extreme rigid honeycombs were constructed in
\cite{blt}.

Our purpose is to address the following two questions:
\begin{enumerate}
\item [(a)]Can one effectively enumerate all the rigid tree honeycombs
of a given weight $\omega?$
\item [(b)]How can one generate rigid tree honeycombs of high weight from
rigid tree honeycombs of smaller weight?
\end{enumerate}
In answer to (a), we offer a method based on the study of certain
combinatorial objects called \emph{locking patterns.} This makes the
enumeration of (types of) rigid tree honeycombs possible, though still
tedious for large weights. For (b), we show that every rigid tree
honeycomb of weight at least two can be constructed from an overlay
of honeycombs with strictly lower weights. In principle, this allows
one to use the enumeration of rigid tree honeycombs of weight less
than a fixed $k\in\mathbb{N}$ to construct all possible such honeycombs
of weight $k$. In practice, this is difficult to achieve because
there are usually many overlays of two honeycombs of given type and
these overlays are hard to construct directly. However, once an overlay
of two honeycombs, or even a single honeycomb, is known, we show how
to construct honeycombs of much greater complexity and weight. These
constructions recover, for instance, some of the infinite families
constructed in \cite{blt} and produce many additional infinite families
of rigid tree honeycombs. The honeycombs constructed this way should
be instrumental in the experimental study of related conjectures,
such as those of \cite{danilov}.

The paper is organized as follows. Basic definitions about honeycombs,
puzzles, and duality are covered in Sections \ref{sec:Honeycombs-and-rigidity}
and \ref{sec:Puzzles-and-duality}. Locking patterns and their connection
to the enumeration of rigid tree honeycombs are also covered in Section
\ref{sec:Puzzles-and-duality}. (The term \emph{locking pattern} was
suggested by Ken Dykema before we really understood what these overlays
are, and this is why the notion is not mentioned in \cite{bcdlt}.)
In Section \ref{sec:Honeycombs-compatible-with-a}, we describe two
constructions that help calculate the exit multiplicities of honeycombs
supported in the edges of the puzzle of a given rigid honeycomb. One
of these results yields the basic inductive construction of rigid
tree honeycombs from other rigid tree honeycombs of smaller weights.
Section \ref{sec:Degeneration-of-a rigid honey} is dedicated to the
study of degenerations of a rigid honeycomb, particularly \emph{simple
degenerations.} These simple degenerations, applied to a rigid tree
honeycomb, produce either an extreme rigid honeycomb or an overlay
of two extreme rigid honeycombs. Necessary conditions are derived
on such overlays and Section \ref{sec:de-degeneration} is dedicated
to showing that these necessary conditions are sufficient as well.
The arguments are based on a new construction that starts with a puzzle
and a compatible honeycomb and produces another puzzle and compatible
honeycomb using an inflation and a partial deflation. The constructions
produce infinite sequences of rigid tree honeycombs whose weights
increase rapidly (like the powers of a number $>1$) but with a fixed
number of nonzero exit multiplicities. Some (but not all) of the infinite
families produced in \cite{blt} are shown to arise from this construction
and some new infinite families are produced.

\section{Honeycombs and rigidity\label{sec:Honeycombs-and-rigidity}}

We review the definition of a honeycomb as presented in \cite{kt}.
Fix vectors $u_{1},u_{2},u_{3}$ of length $\sqrt{3}/3$ in the plane,
arranged in counterclockwise order such that $u_{1}+u_{2}+u_{3}=0$
(see Figure \ref{fig:The-vectors-uj and wj}), and define unit vectors
\[
w_{1}=u_{3}-u_{2},\quad w_{2}=u_{1}-u_{3},\quad w_{3}=u_{2}-u_{1}.
\]
Every point $X\in\mathbb{R}^{2}$ can be written uniquely as $X=x_{1}u_{1}+x_{2}u_{2}+x_{3}u_{3}$
such that $x_{1},x_{2},x_{3}\in\mathbb{R}$ and $x_{1}+x_{2}+x_{3}=0$.
We view the numbers $x_{j}$ as the coordinates of $x$. Given a fixed
$c\in\mathbb{R}$, the equation $x_{j}=c$ represents a line parallel
to $w_{j}$.
\begin{figure}
\begin{picture}(100,100)
\put(0,0){\includegraphics{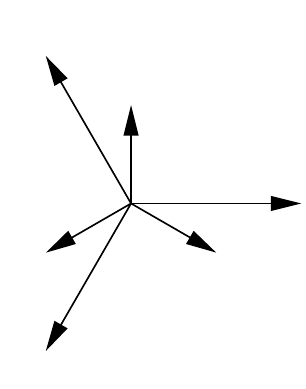}} 
\put(2,3){$w_1$} 
\put(90,52){$w_2$} 
\put(10,100){$w_3$}
\put(0,35){$u_3$}
\put(35,85){$u_2$}
\put(64,35){$u_1$}
\end{picture}%

\caption{\label{fig:The-vectors-uj and wj}The vectors $u_{j}$ and $w_{j}$}

\end{figure}

A honeycomb $\mu$ is described by two elements, namely,
\begin{enumerate}
\item the support of $\mu$. This consists of a finite union $I_{1}\cup\cdots\cup I_{k}\subset\mathbb{R}^{2}$
where
\begin{enumerate}
\item each $I_{j}$ is a closed segment, a closed ray, or a line,
\item each $I_{j}$ is parallel to one of the vectors $w_{1},w_{2},w_{3}$,
and
\item if $i\ne j$ then $I_{i}$ and $I_{j}$ have at most one point in
common that is an endpoint of both $I_{i}$ and $I_{j}$.
\end{enumerate}
\item the multiplicities of the honeycomb. These are (not necessarily integer)
numbers $\mu(I_{1}),\dots,\mu(I_{k})\in(0,+\infty)$. 
\end{enumerate}
It is convenient to consider that a honeycomb $\mu$ assigns zero
multiplicity to segments parallel to some $w_{i}$ that intersect
the support of $\mu$ in finitely many points, and that $\mu$ assigns
multiplicity $\mu(I_{j})$ to all subintervals of $I_{j}$. A \emph{branch
point} of a honeycomb is a point where at least three of the sets
$I_{j}$ meet. The segments $I_{j}$ are called \emph{edges }of\emph{
$\mu$}. The multiplicities of a honeycomb are subject to the following
\emph{balance }condition: Suppose that that $J_{1},J_{2},J_{3},J_{4},J_{5},J_{6}$
are six segments, each parallel to some $w_{i}$, containing no branch
points in their interior, having a common endpoint $X$ and arranged
clockwise around $X$. Then
\begin{equation}
\mu(J_{1})-\mu(J_{4})=\mu(J_{5})-\mu(J_{2})=\mu(J_{3})-\mu(J_{6}).\label{eq:balance condition}
\end{equation}
It may be easier to think of a honeycomb as a Borel measure in the
plane that assigns $\mu(I_{j})\times\text{length}(A)$ to each Borel
subset $A\subset I_{j}$. This allows us to add honeycombs, thus giving
the collection of all honeycombs the structure of a convex cone.

In this work, we do not distinguish between two honeycombs that are
simply translates of each other, with the translation preserving multiplicities.

Suppose that $\mu$ is a honeycomb with support $I_{1}\cup\cdots\cup I_{k}$.
It is easy to see that any endpoint of some $I_{j}$ must also be
an endpoint of another $I_{\ell}$. The support of a honeycomb $\mu$
must contain lines or rays. Each ray points in the direction of one
of the vectors $\pm w_{j}$, $j=1,2,3$. Figure \ref{fig:Some-honeycombs}
shows a few examples of honeycombs. All edge multiplicities, with
one exception indicated by a thicker line, are equal to $1$.
\begin{figure}
\includegraphics[scale=0.4]{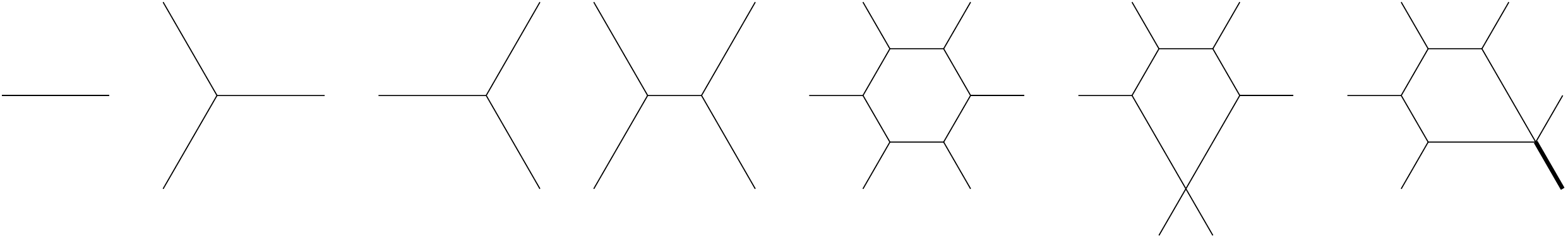}

\caption{Some honeycombs\label{fig:Some-honeycombs}}

\end{figure}

We denote by $\mathcal{M}$ (respectively, $\mathcal{M}_{*}$) the
collection of all honeycombs with the property that the rays in the
support of $\mu$ point in the direction of $w_{1},w_{2},$ or $w_{3}$
(respectively $-w_{1},-w_{2}$, or $-w_{3}$). The second and third
examples in Figure \ref{fig:Some-honeycombs} represent a honeycomb
in $\mathcal{M}$ and its mirror image (relative to a vertical line)
in $\mathcal{M}_{*}$. Suppose that $\mu$ is a honeycomb in $\mathcal{M}$,
$c\in\mathbb{R}$, and $j\in\{1,2,3\}.$ If the support of $\mu$
contains a ray $I\subset\{x_{j}=c\}$, we define
\begin{equation}
\mu^{(j)}(c)=\mu(I).\label{eq:exit densities}
\end{equation}
Otherwise, we set $\mu^{(j)}(c)=0.$ The numbers $\mu^{(j)}(c)$ are
called the \emph{exit multiplicities} of $\mu$. (The ordered collection
of the nonzero exit multiplicities will later be called the \emph{exit
pattern }of $\mu.)$ Of course, $\mu$ only has finitely many nonzero
exit multiplicities and the balance condition (\ref{eq:balance condition})
is easily seen (see, for instance, \cite{kt}) to imply the identity
\begin{equation}
\sum_{c\in\mathbb{R}}\mu^{(1)}(c)=\sum_{c\in\mathbb{R}}\mu^{(2)}(c)=\sum_{c\in\mathbb{R}}\mu^{(3)}(c).\label{eq:omega is defined}
\end{equation}
This common value is called the \emph{weight }of $\mu$ and is denoted
\begin{equation}
\omega(\mu)=\sum_{c\in\mathbb{R}}\mu^{(1)}(c).\label{eq:definition of weight}
\end{equation}
The exit multiplicities also satisfy the equality
\begin{equation}
\sum_{c\in\mathbb{R}}c(\mu^{(1)}(c)+\mu^{(2)}(c)+\mu^{(3)}(c))=0,\label{eq:trace identity}
\end{equation}
sometimes known as the \emph{trace identity} on account of its connection
with linear algebra.

A honeycomb $\mu\in\mathcal{M}$ (or $\mu\in\mathcal{M}_{*}$) is
said to be \emph{rigid} if there is no other honeycomb that has the
same exit multiplicities as $\mu$. It is easy to see that an arbitrary
honeycomb is not rigid if it has a branch point adjacent to six edges
in its support; see Figure \ref{fig:Three-honeycombs-with}.
\begin{figure}
\includegraphics{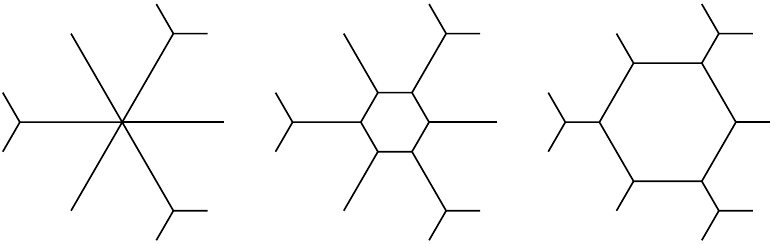}

\caption{\label{fig:Three-honeycombs-with}Three honeycombs with identical
exit multiplicities}

\end{figure}
 In fact, one can determine whether a honeycomb $\mu$ is rigid just
by looking at its support. Let $\mu$ be a honeycomb and let $A,B,C$
be branch points of $\mu$ such that the segments $AB$ and $BC$
are edges of $\mu$. We say that $ABC$ is an \emph{evil turn }\cite[p. 1592]{bcdlt}
if one of the following situations occurs:
\begin{enumerate}
\item $C=A$, and the support of $\mu$ contains edges $BX,BY,BZ$ that
are $120^{\circ},180^{\circ}$, and $240^{\circ}$ clockwise from
$AB$.
\item $BC$ is $120^{\circ}$ clockwise from $AB$.
\item $C\ne A$ and $A,B,C$ are collinear.
\item $BC$ is $120^{\circ}$ counterclockwise from $AB$ and the support
of $\mu$ contains an edge $BX$ that is $120^{\circ}$ clockwise
from $AB$.
\item $BC$ is $60^{\circ}$ counterclockwise from $AB$ and the support
of $\mu$ contains edges $BX,BY$ which are $120^{\circ}$ and $180^{\circ}$
clockwise from $AB$.
\end{enumerate}
\begin{figure}

\begin{picture}(275,100)
\put(0,0){\includegraphics[scale=.8]{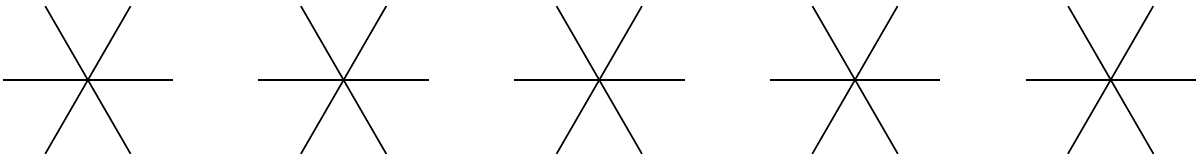}}
\put(-30,20){$A=C$}
\put(16,23){$B$}
\put(38,20){$Y$}
\put(31,32){$X$}
\put(31,0){$Z$}
\put(53,20){$A$}
\put(75,23){$B$}
\put(90,32){$C$}
\put(110,20){$A$}
\put(134,23){$B$}
\put(155,20){$C$}
\put(174,20){$A$}
\put(194,23){$B$}
\put(208,32){$X$}
\put(208,0){$C$}
\put(230,20){$A$}
\put(253,23){$B$}
\put(240,0){$C$}
\put(274,20){$Y$}
\put(267,32){$X$}
\end{picture}
\caption{\label{fig:Evil-turns}Evil turns}
\end{figure}
The possible evil turns are illustrated in Figure \ref{fig:Evil-turns}
where the edges that \emph{must }be in the support of\emph{ $\mu$}
are labeled, but the support may contain all six edges incident to
$B$. A sequence $A_{1},A_{2},\dots,A_{n},A_{n+1}=A_{1}$ of branch
points of $\mu$ is called an \emph{evil loop} if $A_{j-1}A_{j}A_{j+1}$
is an evil turn for every $j=1,\dots,n$. Each of the first three
configurations in Figure \ref{fig:Some-evil-loops} contains exactly
one evil loop (up to cyclic permutations) but the fourth one contains
none. 
\begin{figure}
\includegraphics[scale=0.8]{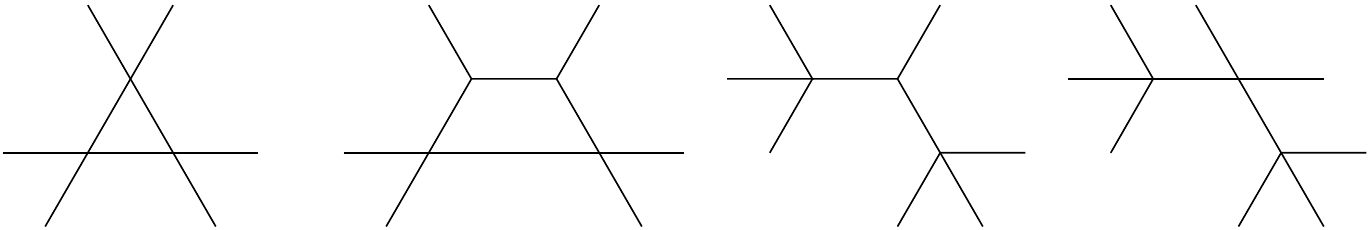}

\caption{Some loops\label{fig:Some-evil-loops}}

\end{figure}

The following result is proved as in \cite[Proposition 2.2]{bcdlt}.
(The proof uses a characterization of rigidity via puzzles; see \cite[Theorems 4 and 7]{KTW}.)
\begin{prop}
\label{prop:rigid in terms of evil}A honeycomb $\mu\in\mathcal{M}$
is rigid if and only if its support contains none of the following
configurations:
\begin{enumerate}
\item Six edges meeting in one branch point.
\item An evil loop.
\end{enumerate}
\end{prop}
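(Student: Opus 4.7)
The plan is to split the biconditional and use the puzzle--honeycomb duality invoked in the excerpt. In one direction one exhibits an explicit deformation of $\mu$ whenever it contains a forbidden configuration; in the other, one imports the rigidity criterion of \cite[Theorems 4 and 7]{KTW} via the puzzle dual to $\mu$.

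First I would handle the easier implication: if the support of $\mu$ contains a six-valent branch point or an evil loop, then $\mu$ is not rigid. When a branch point $X$ of $\mu$ has all six incident edges, one shrinks each edge incident to $X$ by the same small amount $t>0$ and opens a central regular hexagon around $X$; the balance condition is preserved at $X$ and at each of the six new vertices, producing a continuous family $\mu_{t}$ of honeycombs with the same exit multiplicities as $\mu$. When $\mu$ contains an evil loop $A_{1}A_{2}\cdots A_{n}A_{1}$, inspection of each of the five evil-turn types in Figure \ref{fig:Evil-turns} shows that at each $A_{j}$ there is a local balance-preserving move of magnitude $t$ that lengthens one of the two loop edges incident to $A_{j}$ while shortening the other (and, in the cases with auxiliary edges at $B$, adjusts those edges accordingly). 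The defining geometry of an evil turn is precisely what makes the move at $A_{j}$ compatible with the move at $A_{j+1}$ along the shared edge $A_{j}A_{j+1}$; summing the local moves around the closed loop yields a genuine deformation $\mu_{t}$ of $\mu$ with the same exit multiplicities, witnessing non-rigidity.

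For the converse, suppose the support of $\mu$ contains neither a six-valent vertex nor an evil loop; I want to conclude that $\mu$ is rigid. After scaling, one may assume the exit multiplicities are integers and associate to $\mu$ its dual puzzle tiling a large triangle, in the sense of \cite{KTW}. By \cite[Theorems 4 and 7]{KTW}, $\mu$ is rigid if and only if the puzzle is the unique puzzle with its boundary, and this holds if and only if the puzzle admits no \emph{gentle loop} in the sense of KTW. The heart of the argument is then a local dictionary translating the transitions of a gentle loop across each puzzle vertex into either a six-valent branch point of $\mu$ or one of the five evil-turn configurations at a branch point of $\mu$. Reading off this dictionary, every gentle loop in the puzzle yields an evil loop in $\mu$ (or forces a six-valent branch point), so the hypothesis rules out gentle loops and thereby forces puzzle rigidity and hence honeycomb rigidity.

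The main obstacle is precisely this dictionary and the global compatibility check that the sequence of evil turns produced by a gentle loop actually closes up into an evil loop. Each of the five evil-turn types must be matched with the corresponding puzzle-side gentle-turn type, and one must verify that the matching is bijective at each vertex---neither omitting cases nor double-counting. This case analysis is already executed in \cite[Proposition 2.2]{bcdlt}, which is why the present proof is merely cited; without the dictionary, however, the inductive structure of the enumeration program that follows in later sections would not get off the ground.
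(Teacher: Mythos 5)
The paper offers no proof of this proposition: it is quoted from \cite[Proposition 2.2]{bcdlt}, with only the parenthetical remark that the proof rests on the puzzle characterization of rigidity in \cite[Theorems 4 and 7]{KTW}. Your converse direction --- no six-valent vertex and no evil loop implies no gentle loop in the puzzle, hence rigidity --- is exactly the route of that cited proof, and your acknowledgement that the case-by-case dictionary is carried out in \cite{bcdlt} is appropriate.

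The forward direction is where you depart from the cited argument, and where there is a genuine gap. The six-valent case is fine (it is literally Figure \ref{fig:Three-honeycombs-with}). For an evil loop, however, the claimed ``local balance-preserving move of magnitude $t$ that lengthens one of the two loop edges incident to $A_{j}$ while shortening the other'' is not a local operation: changing the lengths of two edges meeting at a branch point means translating that branch point, which simultaneously changes the lengths (or the directions, which is not permitted) of every non-loop edge incident to it, so the perturbation propagates off the loop and must be controlled globally rather than ``summed'' turn by turn. Moreover, for evil turns of types (1), (4), and (5) the witnessing deformation does not merely reparametrize lengths --- it splits the branch point and creates new edges of small positive multiplicity, exactly as the six-valent vertex opens into a hexagon in Figure \ref{fig:Three-honeycombs-with}; this is the reason the definition of those turn types insists that the auxiliary edges $BX,BY,BZ$ be present in the support. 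Your sketch neither specifies the global perturbation nor accounts for the change of support. The clean way to produce the deformation is the one the cited proof uses: translate the evil loop into a gentle loop of the puzzle, apply the KTW breathing of a puzzle along a gentle loop to obtain a second puzzle with the same boundary, and deflate back to a second honeycomb with the same exit multiplicities. Either run both directions through that dictionary, or supply the global honeycomb-side construction that the present sketch only gestures at.
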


Suppose that $e=AB$ and $f=BC$ are edges of a rigid honeycomb $\mu$.
As in \cite{bcdlt}, we write $e\to_{\mu}f$ (or $e\to f$ when $\mu$
is understood; see Figure \ref{fig:The-relation-=00005Cto}) if one
of the following two situations arises:
\begin{enumerate}
\item $\varangle ABC=120^{\circ}$ and the segment $BX$ opposite $AB$
is not in the support of $\mu$.
\item $e$ and $f$ are opposite and there exists a segment $BX$ not in
the support of $\mu$ such that $\varangle CBX=60^{\circ}$.
\end{enumerate}
\begin{figure}
\begin{picture}(100,100)
\put(0,0){\includegraphics{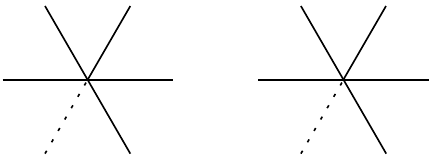}}
\put(13,23){$B$}
\put(30,44){$A$}
\put(-3,23){$C$}
\put(5,0){$X$}
\put(86,23){$B$}
\put(68,23){$C$}
\put(120,23){$A$}
\put(80,0){$X$}
\end{picture}

\caption{\label{fig:The-relation-=00005Cto}The relation `$\to$'}

\end{figure}
The balance condition shows that $\mu(e)\le\mu(f)$ whenever $e\to f$.
In fact, $\mu(f)$ equals $\mu(e)$ if $e\to f$ and $f\to e$, and
it equals $\mu(e)+\mu(e')$ when $e\to f$, $e'\to f$, and $e\ne e'$.
The transitive closure $\Rightarrow$ of the relation $\to$ was called
\emph{descendant} in \cite{bcdlt}. Thus, $e\Rightarrow f$ if there
exists a \emph{descendance path}, that is, a sequence $e_{1}\dots e_{n}$
of edges such that $e_{1}=e,e_{n}=f,$ and $e_{j}\to e_{j+1}$, $j=1,\dots,n-1$.
Given such a path, $e$ is called an \emph{ancestor} of $f$ and $f$
is called a \emph{descendant} of $e$. We write $e\Leftrightarrow f$,
and we say that $e$ and $f$ are \emph{equivalent}, if either $e=f$
or both $e\Rightarrow f$ and $f\Rightarrow e$ hold. An edge $e$
of $\mu$ is said to be a \emph{root edge} if the relation $f\Rightarrow e$
implies $e\Leftrightarrow f$. The above observations show that the
multiplicity of an arbitrary edge of $\mu$ can be written as a linear
combination with positive integer coefficients of the multiplicities
of root edges. More precisely, suppose that $\mu$ is a rigid honeycomb
and that $e_{1},\dots,e_{n}$ is a maximal sequence of pairwise inequivalent
root edges. Then an arbitrary edge $f$ of $\mu$ satisfies
\[
\mu(f)=\sum_{j=1}^{n}d_{j}\mu(e_{j}),
\]
where $d_{j}\ge0$ is the number of distinct descendance paths from
$e_{j}$ to $f$. Another way of formulating this is to say that
\[
\mu=\sum_{j=1}^{n}\mu(e_{j})\mu_{j},
\]
where each $\mu_{j}$ is itself a rigid honeycomb (see \cite[Section 3]{bcdlt}).
The summands $\mu_{j}$ belong to extreme rays in the cone $\mathcal{M}$.
Extreme rigid honeycombs are characterized by the fact that they have
a unique equivalence class of root edges. An extreme rigid honeycomb
is a \emph{tree honeycomb} precisely when the multiplicity assigned
to its root edges is equal to $1$. Alternatively, rigid tree honeycombs
are obtained as immersions of a special kind of tree that we now define.

The trees relevant to the construction of rigid honeycombs are finite
trees whose edges are labeled $1,2,$ or $3$, and the following conditions
are satisfied:
\begin{enumerate}
\item each vertex has order $1$ or $3$, and
\item the three edges adjacent to a vertex of order $3$ have distinct labels.
\end{enumerate}
An \emph{immersion }of a tree is a map $f$ that associates to each
vertex $v$ of order 3 of a tree a point $f(v)$ in the plane such
that the following conditions are satisfied:
\begin{enumerate}
\item [(i)]if $a$ and $b$ are joined by an edge labeled $j$, then $f(a)\ne f(b)$
and the segment $f(a)f(b)$ is parallel to $w_{j}$, and
\item [(ii)] if $xa,xb$, and $xc$ are three edges adjacent to a vertex
$x$ then the angle between any two of the segments $f(x)f(a),f(x)f(b)$,
and $f(x)f(c)$ equals $120^{\circ}$.
\end{enumerate}
Given a tree $T$ that has at least two vertices of order $3$, and
an immersion $f$ of $T$, we define a honeycomb $\mu_{f}$ as follows.
Let $ab$ be an edge of $T$. If $a$ and $b$ have order $3$, we
add the segment $f(a)f(b)$ to the support of $\mu_{f}$ and we add
$1$ to the multiplicity of this segment. If $a$ is of order $1$,
$ab$ is labeled $j$, and $c,d$ are the other vertices connected
to $b$, we add to the support of $\mu_{f}$ a ray $h$ with endpoint
$f(b)$ and parallel to $w_{j}$ and we add $1$ to the multiplicity
of this ray. The ray is chosen such that the angles between any two
of $h,f(b)f(c)$, and $f(b)f(d)$ equal $120^{\circ}$. The segments
$f(a)f(b)$ and the rays constructed above can be viewed as images
under the immersion $f$ of the edges of $T$. The honeycomb $\mu_{f}$
is simply determined by adding all of the multiplicities just defined.
If $T$ has exactly one vertex of order $3$, there are two possible
honeycombs associated to an immersion $f$, one in $\mathcal{M}$
and the other in $\mathcal{M}_{*}$. The honeycombs obtained from
immersions of trees are called \emph{tree honeycombs.}

\begin{figure}
\begin{picture}(250,100)
\put(0,0){\includegraphics[scale=.5]{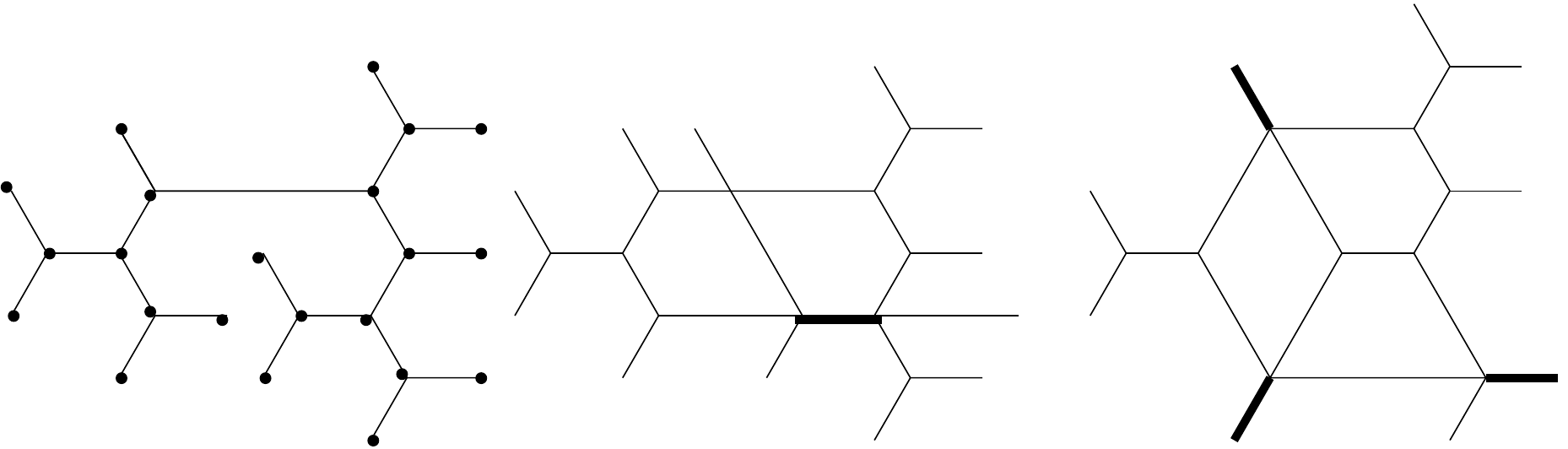}}
\put(12,35){$2$}
\put(73,35){$2$}
\put(73,56){$2$}
\put(73,13){$2$}
\put(30,24){$2$}
\put(55,24){$2$}
\put(42,46){$2$}
\put(6,23){$1$}
\put(68,23){$1$}
\put(68,45){$1$}
\put(68,1){$1$}
\put(49,12){$1$}
\put(25,12){$1$}
\put(25,33){$1$}
\put(17,45){$3$}
\put(-1,35){$3$}
\put(60,35){$3$}
\put(60,13){$3$}
\put(60,56){$3$}
\put(43,23){$3$}
\put(18,23){$3$}
\end{picture}

\caption{A tree and two of its immersions\label{fig:A-tree-and-immersions-1}}
\end{figure}

Not every tree honeycomb is rigid. Figure \ref{fig:A-tree-and-immersions-1}
represents a tree $T$ and two immersions of $T$, the second of which
yields a rigid honeycomb.

Suppose that $\mu\in\mathcal{M}$ is a rigid honeycomb with a unique
equivalence class of root edges, all of which have multiplicity $1$.
We construct a tree $T_{0}$ as follows. Fix a root edge $e_{0}=XY$
of $\mu$. The vertices of $T_{0}$ are $x,y$, and the descendance
paths $e_{0}\dots e_{n}$. Given such a descendance path, the vertices
$v=e_{0}\dots e_{n-1}$ and $v'=e_{0}\dots e_{n}$ are joined by an
edge of $T_{0}$ labeled $j$ if $e_{n}$ is parallel to $w_{j}$.
In addition, $x$ is joined to $y$ by an edge labeled $j$ if $XY$
is parallel to $w_{j}$, $x$ is joined to $e_{0}e_{1}$ if $X$ is
a an endpoint of $e_{1}$, and $y$ is joined to $e_{0}e_{1}$ if
$Y$ an endpoint of $e_{1}$. The same labeling rule applies to the
last two kinds of edges. The tree $T_{0}$ obtained this way may have
some vertices of order $2$. To obtain a tree $T$ satisfying condition
(1) we simply remove the vertices of order $2$ as follows: suppose
that $v_{0}v_{1}\dots v_{n}$ is a path in $T_{0}$ such that $v_{0}$
and $v_{n}$ have order $1$ or $3$ and $v_{1},\dots,v_{n-1}$ have
order $2$. Then $v_{1},\dots,v_{n-1}$ are removed from $T_{0}$
and an edge joining $v_{0}$ and $v_{n}$, carrying the label of $v_{0}v_{1}$,
is added instead. Clearly, we have $\mu=\mu_{f}$ for some immersion
$f$ of $T$. The construction of $T_{0}$ and $T$ is illustrated
in Figure \ref{fig:The-construction-ofT} for a particular rigid tree
honeycomb.
\begin{figure}
\begin{picture}(250,100)
\put(0,0){\includegraphics[scale=.7]{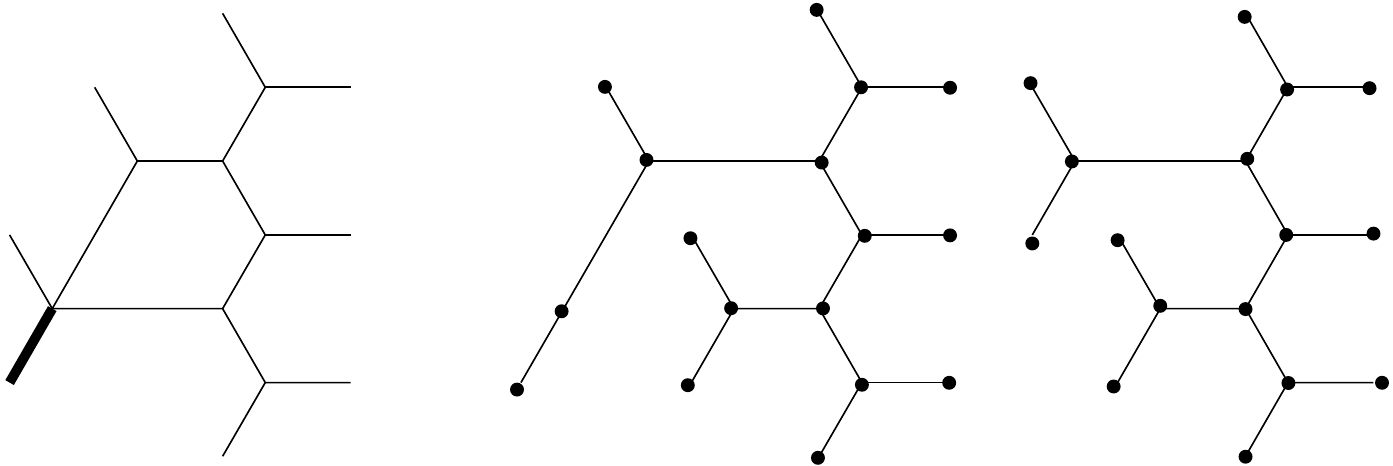}}
\put(17,59){$X$}
\put(49,59){$Y$}
\put(121,59){$x$}
\put(169,59){$y$}
\put(208,59){$x$}
\put(256,59){$y$}

\put(145,63){$2$}
\put(232,63){$2$}
\put(181,78){$2$}
\put(267,78){$2$}
\put(181,49){$2$}
\put(267,49){$2$}
\put(181,18){$2$}
\put(267,18){$2$}
\put(155,33){$2$}
\put(241,33){$2$}

\put(128,49){$1$}
\put(214,49){$1$}
\put(112,20){$1$}
\put(146,20){$1$}
\put(232,20){$1$}
\put(172,35){$1$}
\put(258,35){$1$}
\put(249,67){$1$}
\put(258,6){$1$}
\put(163,67){$1$}
\put(172,6){$1$}

\put(117,67){$3$}
\put(203,67){$3$}
\put(137,35){$3$}
\put(223,35){$3$}
\put(163,49){$3$}
\put(163,20){$3$}
\put(163,78){$3$}
\put(249,49){$3$}
\put(249,20){$3$}
\put(249,78){$3$}
\end{picture}

\caption{\label{fig:The-construction-ofT}The construction of $T_{0}$ and
$T$}

\end{figure}

Suppose that $T$ is a tree, that $f$ is an immersion of $T$, and
that $\mu=\mu_{f}$ belongs to $\mathcal{M}$. Suppose also that there
exists an edge $xy$ of $T$ such that $\mu$ assigns unit multiplicity
to part of $f(x)f(y)$. Orient the edges of $T$ other than $xy$
away from $xy$. We say that the immersion $f$ is \emph{coherent}
if the following condition is satisfied: if a segment $I$ is contained
in the images under $f$ of two different edges of $T$, then the
two orientations induced on $I$ are the same. The following result
is \cite[Theorem 4.1]{blTR}.
\begin{thm}
\label{thm:rigid immersion} Suppose that $T$ is a tree, that $f$
is an immersion of $T$, and that $\mu=\mu_{f}\in\mathcal{M}$. Then
$\mu$ is rigid if and only if the following three conditions are
satisfied.
\begin{enumerate}
\item There exists an edge $xy$ of $T$ such that $\mu$ assigns multiplicity
$1$ to part of $f(x)f(y)$.
\item The immersion $f$ is coherent.
\item There is no branch point $X$ of $\mu$ such that four segment $XA_{1},XA_{2},XB_{1},$
and $XB_{2}$ have nonzero multiplicity, the points $A_{1}XA_{2}$
are collinear, and the points $B_{1}XB_{2}$ are collinear; see Figure
\emph{\ref{fig:A-crossing}}.
\end{enumerate}
\end{thm}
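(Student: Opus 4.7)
The plan is to use Proposition~\ref{prop:rigid in terms of evil}, which characterizes rigidity of $\mu$ by the absence of six-edge branch points and of evil loops in its support, to convert both directions into combinatorial statements about $T$ and $f$.

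\emph{Necessity.} Assume $\mu = \mu_f$ is rigid. Condition~(3) is the easiest: a branch point $X$ with the four segments $XA_1, XA_2, XB_1, XB_2$ described there admits a local deformation that pulls the crossing apart into two non-crossing paths through $X$, preserving all exit multiplicities, so rigidity forbids it. For condition~(1), the root-edge analysis preceding the theorem shows that $\mu$ has at least one edge realizing the minimum multiplicity on its descendance class; such an edge must be covered by exactly one edge of $T$ in the immersion, so that edge $xy$ satisfies~(1). For condition~(2), suppose a segment $I$ lies in $f(e)\cap f(e')$ for two edges $e\ne e'$ of $T$ with conflicting induced orientations; one shows that this conflict propagates along $T$ under the immersion, eventually forcing either a crossing forbidden by~(3) or an evil loop forbidden by rigidity itself, a contradiction.

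\emph{Sufficiency.} Assume (1), (2), and (3); by Proposition~\ref{prop:rigid in terms of evil} it suffices to rule out six-edge branch points and evil loops. A six-edge branch point automatically contains three pairs of opposite collinear edges, any two of which verify the hypothesis of~(3); so~(3) excludes it.

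To exclude evil loops, fix $xy$ as in~(1), orient the remaining edges of $T$ away from $xy$, and use coherence~(2) to transport this orientation onto a well-defined orientation on the union $\bigcup_e f(e)$, which is the support of $\mu$. The key claim is that this orientation is compatible with the descendance relation $\to_\mu$ of Section~\ref{sec:Honeycombs-and-rigidity}: whenever $e\to_\mu e'$, the forward direction on $e$ continues into the forward direction on $e'$ at their common endpoint. The verification is a short case analysis using the two clauses in the definition of $\to_\mu$ together with the balance condition, and condition~(3) is precisely what rules out the ambiguous configuration of Figure~\ref{fig:A-crossing}, where orientation could fail to propagate consistently through a branch point. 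Once this claim is established, an evil loop $A_1,\dots,A_n,A_{n+1}=A_1$ yields a sequence of edges $A_iA_{i+1}$ that are chained by $\to_\mu$ and close up into an oriented cycle; pulling the cycle back through $f$ gives a cycle in $T$, contradicting the fact that $T$ is a tree.

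The main obstacle is the orientation/descendance compatibility claim in the sufficiency direction: each of the five evil-turn types must be traced through the immersion $f$, and one must check that condition~(3) is exactly strong enough to forbid every branch-point configuration in which the tree-induced orientation would be multi-valued.
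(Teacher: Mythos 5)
First, note that the paper does not prove this statement at all: it is quoted verbatim as \cite[Theorem 4.1]{blTR}, so there is no internal proof to compare against, and your argument has to stand on its own.

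As it stands it does not. The most serious problem is in the sufficiency direction, in the step ``an evil loop $A_1,\dots,A_n,A_{n+1}=A_1$ yields a sequence of edges $A_iA_{i+1}$ that are chained by $\to_{\mu}$.'' This conflates two genuinely different relations. Compare the five evil-turn types with the two clauses defining $e\to_{\mu}f$: an evil turn of type (2) ($BC$ is $120^{\circ}$ clockwise from $AB$) is an evil turn with \emph{no} hypothesis on the other edges at $B$, whereas $AB\to_{\mu}BC$ for a $120^{\circ}$ turn requires the edge opposite $AB$ at $B$ to be \emph{absent}; type (4) likewise imposes no such absence. So an evil loop need not be a descendance cycle, and the contradiction you want does not follow. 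Even if every turn of the loop did respect the orientation transported from $T$, the final step ``pulling the cycle back through $f$ gives a cycle in $T$'' is exactly the point where the argument must work hardest: $f$ is not injective, a single branch point of $\mu$ is typically the image of several vertices of $T$, and consecutive edges of a plane loop may be covered by tree edges that are nowhere adjacent in $T$. A closed oriented path in the support does not automatically lift to a closed path in $T$; ruling it out requires a genuine local analysis of how tree edges overlap at each branch point (this is where coherence and condition (3) must actually be used), which you have named but not carried out.

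The necessity direction also has unproved assertions. For condition (3), the claimed ``local deformation that pulls the crossing apart'' is not exhibited; when the two transversal lines through $X$ carry different multiplicities $a\neq b$, the naive unzipping of $X$ into two trivalent vertices violates the balance condition (a trivalent vertex forces all three multiplicities equal), so a correct deformation, or a derivation of an evil loop elsewhere in the support, must be supplied. For condition (1), the claim that rigidity forces some edge of $T$ to be the unique one covering part of its image is precisely what has to be proved --- a rigid extreme honeycomb can perfectly well have all edge multiplicities $\ge 2$ (e.g.\ $2\nu$ for $\nu$ a rigid tree honeycomb), so one must show such a honeycomb cannot arise as $\mu_{f}$ for a tree immersion, and your appeal to ``the root-edge analysis'' does not do this. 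The necessity of coherence is likewise only a statement of intent (``one shows that this conflict propagates\dots''). In short, the skeleton --- reduce to Proposition \ref{prop:rigid in terms of evil}, orient the support via the tree, and analyze branch points --- is reasonable, but every load-bearing verification is either missing or, in the evil-loop step, based on an incorrect identification of evil turns with descendance steps.
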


\begin{figure}
\begin{picture}(80,100)
\put(0,0){\includegraphics[scale=.35]{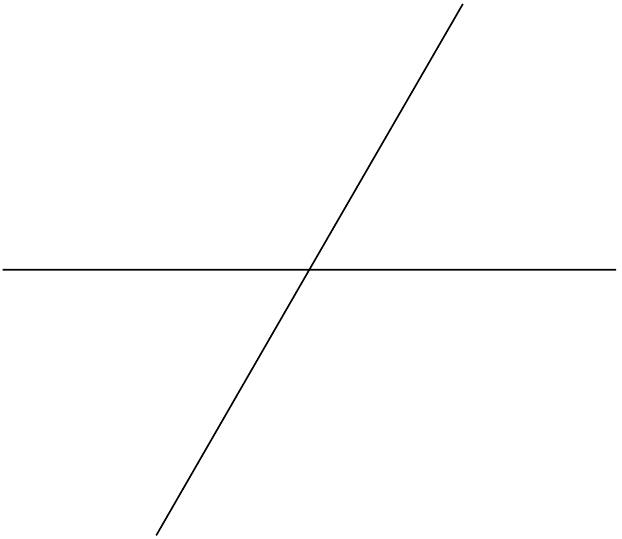}}
\put(-11,25){$A_1$}
\put(64,25){$A_2$}
\put(24,30){$X$}
\put(17,-3){$B_1$}
\put(46,55){$B_2$}

\end{picture}

\caption{\label{fig:A-crossing}A crossing}

\end{figure}
Having obtained a description of rigid tree honeycombs, it is important
to determine which linear combinations $c_{1}\mu_{1}+\cdots+c_{n}\mu_{n}$,
$c_{1},\dots,c_{n}>0$, of rigid tree honeycombs are rigid. Such combinations
are called \emph{rigid overlays} and their description involves a
bilinear map \cite{blt} defined on honeycombs in $\mathcal{M}$ as
follows: given $\mu,\nu\in\mathcal{M}$, we set
\begin{equation}
\Sigma_{\nu}(\mu)=\sum_{c<d}[\mu^{(1)}(c)\nu^{(1)}(d)+\mu^{(2)}(c)\nu^{(2)}(d)+\mu^{(3)}(c)\nu^{(3)}(d)]-\omega(\mu)\omega(\nu),\label{eq:the function sigma}
\end{equation}
where $\mu^{(j)}(c)$ is as defined above (\ref{eq:exit densities}).
It was seen in \cite[end of Section 2]{blt} that 
\begin{equation}
\Sigma_{\mu}(\mu)=\frac{1}{2}\left[\omega(\mu)^{2}-\sum_{c\in\mathbb{R}}[\mu^{(1)}(c)^{2}+\mu^{(2)}(c)^{2}+\mu^{(3)}(c)^{2}]\right].\label{eq:sigms(mu,mu)}
\end{equation}
The following result is \cite[Theorem 4.2]{blt}.
\begin{thm}
\label{thm:sigma(tree,tree)}Let $\mu$ and $\nu$ be two distinct
rigid tree honeycombs in $\mathcal{M}$. Then $\Sigma_{\nu}(\mu)\ge0$
and $\Sigma_{\nu}(\nu)=-1$.
\end{thm}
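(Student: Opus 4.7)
My plan is to prove the two assertions separately: the intrinsic identity $\Sigma_\nu(\nu)=-1$ first, via a combinatorial count on the tree $T$ underlying $\nu$, and then the non-negativity $\Sigma_\nu(\mu)\geq 0$ via an induction on the total weight $\omega(\mu)+\omega(\nu)$.

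For $\Sigma_\nu(\nu)=-1$, formula (\ref{eq:sigms(mu,mu)}) reduces the claim to
\[
\sum_{c\in\mathbb{R}}\sum_{j=1}^{3}\bigl(\nu^{(j)}(c)\bigr)^{2}=\omega(\nu)^{2}+2.
\]
Write $\nu=\mu_f$ for an immersion $f$ of a tree $T$ satisfying the conditions of Theorem \ref{thm:rigid immersion}. A tree with $v_3$ trivalent vertices has $v_1=v_3+2$ leaves, and (\ref{eq:omega is defined}) forces exactly $\omega(\nu)$ leaves to carry each label $j\in\{1,2,3\}$, so $v_1=3\omega(\nu)$. Expanding $k^{2}=k+2\binom{k}{2}$ and using $\sum_c \nu^{(j)}(c)=\omega(\nu)$, the identity reduces to
\[
\sum_{j,c}\binom{\nu^{(j)}(c)}{2}=\binom{\omega(\nu)-1}{2},
\]
i.e.\ the number of unordered pairs of leaves of $T$ whose rays are identified by $f$ equals $\binom{\omega(\nu)-1}{2}$. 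I would prove this by induction on $\omega(\nu)$: the base case $\omega(\nu)=1$ is the tripod, with no merged pairs. For the inductive step, I would locate a pendant sub-tripod of $T$ or another configuration that can be collapsed, use the coherence and no-crossing conditions of Theorem \ref{thm:rigid immersion} to ensure the resulting smaller honeycomb is still a rigid tree honeycomb, and verify that the merged-pair total drops by exactly $\omega(\nu)-2$.

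For $\Sigma_\nu(\mu)\geq 0$, the base case $\omega(\mu)=\omega(\nu)=1$ is a direct computation: writing $\mu$ and $\nu$ as tripods centred at distinct points $P$ and $Q$, with coordinates $(q_1,q_2,q_3)$ of $Q-P$ satisfying $q_1+q_2+q_3=0$, formula (\ref{eq:the function sigma}) yields $\Sigma_\nu(\mu)=|\{j:q_j>0\}|-1\geq 0$, since the $q_j$ sum to zero and do not all vanish. For the inductive step I would peel off a terminal root-equivalence class of edges from one of $\mu$ or $\nu$ using the descendance structure of Section \ref{sec:Honeycombs-and-rigidity}, producing a rigid tree honeycomb of strictly lower weight, and track the change in $\Sigma_\nu(\mu)$. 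The main obstacle is precisely this inductive step: because $\Sigma$ is asymmetric in its two arguments, the pruning must be chosen carefully depending on which side of the pair it acts on, and one must ensure that the pruned object remains a single rigid tree honeycomb so that the inductive hypothesis applies. The first assertion, by contrast, reduces to a cleaner induction on the underlying combinatorial tree.
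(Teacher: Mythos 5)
Note first that the paper contains no internal proof of this statement: it is quoted verbatim as \cite[Theorem 4.2]{blt}, so the only comparison available is with the argument in that reference. Judged on its own terms, your proposal is a correct set-up around two missing cores. The reductions are fine: passing from $\Sigma_{\nu}(\nu)=-1$ to (\ref{eq:omega square +2}) via (\ref{eq:sigms(mu,mu)}), the leaf count $v_{1}=3\omega(\nu)$, the restatement as ``the number of merged leaf pairs equals $\binom{\omega(\nu)-1}{2}$,'' and both base cases (including the two-tripod computation giving $\#\{j:q_{j}>0\}-1\ge0$) are all correct. But the two inductive steps, which carry the entire weight of the argument, are absent, and the only concrete surgery you propose would fail. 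Collapsing a pendant sub-tripod (a trivalent vertex with two leaf neighbours) deletes two leaves and creates one, so the leaf count drops from $3\omega(\nu)$ to $3\omega(\nu)-1$; since a tree honeycomb in $\mathcal{M}$ must have exactly $\omega$ leaves of each of the three labels, the collapsed object violates (\ref{eq:omega is defined}) --- equivalently, the balance condition (\ref{eq:balance condition}) fails at the collapsed vertex --- and is not a tree honeycomb of weight $\omega(\nu)-1$, so the induction hypothesis cannot be applied to it. Your claim that the merged-pair total drops by exactly $\omega(\nu)-2$ corresponds to a weight drop of exactly $1$, which forces the removal of exactly three leaves, one of each label; a genuinely different surgery is therefore required. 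Similarly, in the second part, ``peeling off a terminal root-equivalence class'' is not well defined (a rigid tree honeycomb has a \emph{unique} equivalence class of root edges), and deleting a subset of edges again breaks the balance condition, so the pruned object is not a honeycomb to which the inductive hypothesis could apply; you acknowledge yourself that this step is the obstacle.

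For orientation, the proof in \cite{blt} does not proceed by induction on weight at all. It establishes the combinatorial interpretation recorded in Remark \ref{rem:calculation of Sigma}: after fixing a root edge of $\nu$ off the support of $\mu$ and orienting the tree of $\nu$ away from it, $\Sigma_{\nu}(\mu)$ is exhibited as the number of incidence events of four explicit local types between the two immersed trees, whence $\Sigma_{\nu}(\mu)\ge0$ is immediate; the identity $\Sigma_{\nu}(\nu)=-1$ comes from the analogous count performed on a single tree against itself. If you wish to salvage your approach, that counting formula is exactly the lemma you are missing, and proving it directly is likely easier than repairing either induction.
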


It is possible that $\Sigma_{\nu}(\mu)>0$ and $\Sigma_{\mu}(\nu)>0$
for two rigid tree honeycombs $\mu$ and $\nu$, see Figure \ref{fig:positive-sigmas}.
\begin{figure}
\includegraphics[scale=0.8]{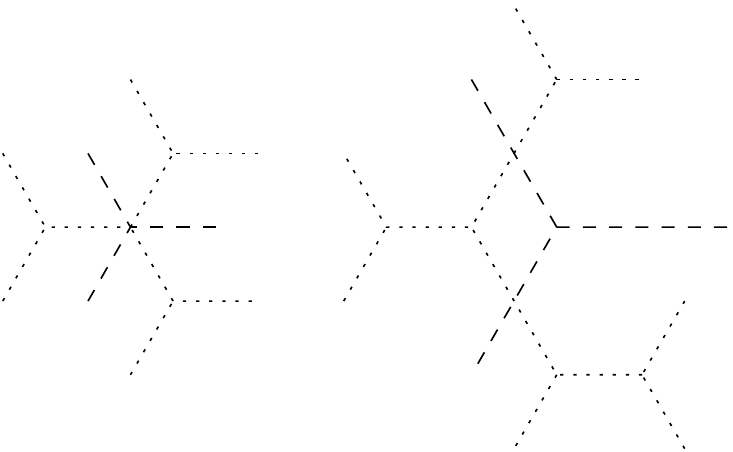}

\caption{\label{fig:positive-sigmas}$\Sigma_{\mu}(\nu)>0$ and $\Sigma_{\nu}(\mu)>0$}

\end{figure}
 It follows from (\ref{eq:sigms(mu,mu)}) that the second assertion
in Theorem \ref{thm:sigma(tree,tree)} can be written as
\begin{equation}
\sum_{c\in\mathbb{R}}[\mu^{(1)}(c)^{2}+\mu^{(2)}(c)^{2}+\mu^{(3)}(c)^{2}]=\omega(\mu)^{2}+2.\label{eq:omega square +2}
\end{equation}

\begin{rem}
\label{rem:calculation of Sigma} Suppose that $\mu$ and $\nu$ are
distinct rigid honeycombs in $\mathcal{M}$ obtained from immersions
$f$ of $T$ and $g$ of $S$, respectively. Then \cite[Theorem 5.1]{blt}
provides a method for the calculation of $\Sigma_{\nu}(\mu)$. Fix
an edge $e$ in $S$ that is mapped by $g$ onto a root edge of $\nu$
that is not contained in the support of $\mu$. Orient all the other
edges of $S$ away from $e$. Then $\Sigma_{\nu}(\mu)$ is the total
number of occurrences of events of the four types indicated in the
Figure \ref{fig:Calculation-of-sigma}.
\begin{figure}
\begin{picture}(250,100)
\put(0,0){\includegraphics[scale=.60]{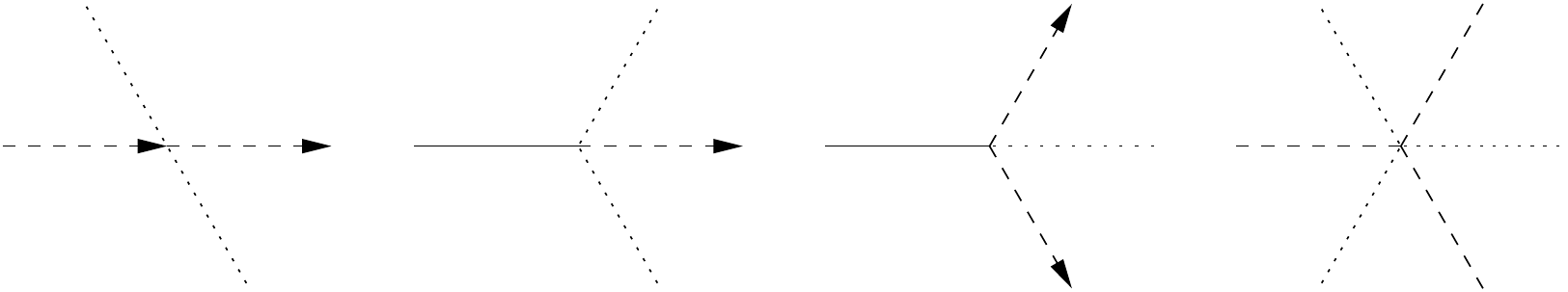}}
\put(-10,25){$\mu$}
\put(10,52){$\nu$}

\put(66,25){$\mu$}
\put(119,52){$\nu$}

\put(194,52){$\mu$}
\put(209,25){$\nu$}

\put(268,52){$\mu$}
\put(285,25){$\nu$}
\end{picture}

\caption{\label{fig:Calculation-of-sigma}Calculation of $\Sigma_{\nu}(\mu)$}
\end{figure}
 In these diagrams, the dashed lines represent the images under $g$
of edges of $S$, the dotted lines represent the images under $f$
of edges of $T$, and the solid lines represent common edges of $\mu$
and $\nu$. (Note that, in counting these events we consider edges
of the trees rather than edges of the honeycombs themselves. Thus,
when looking at edges of the honeycombs, one must take into account
their multiplicities.) 
\begin{figure}
\begin{picture}(80,100)
\put(0,0){\includegraphics[scale=1]{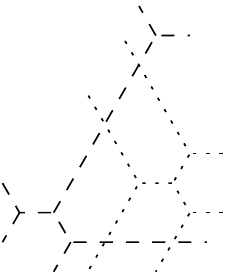}}
\put(-10,25){$\mu$}
\put(70,32){$\nu$}
\end{picture}\caption{$\mu$ is clockwise from $\nu$\label{fig:clockwise overlay}}

\end{figure}
\end{rem}

The following characterization of rigid overlays is \cite[Corollary 4.5]{blt}
(see also \cite[Theorem 3.8]{bcdlt}).
\begin{thm}
\label{thm:rigid overlays}Let $\mu_{1},\dots,\mu_{n}$ be distinct
rigid tree honeycombs and let $c_{1},\dots,c_{n}$ be positive numbers.
Then the following are equivalent:
\begin{enumerate}
\item The honeycomb $\sum_{j=1}^{n}c_{j}\mu_{j}$ is rigid.
\item There exists a permutation $\nu_{1},\dots,\nu_{n}$ of $\mu_{1},\dots,\mu_{n}$
such that $\Sigma_{\nu_{i}}(\nu_{j})=0$ for $i>j$.
\end{enumerate}
\end{thm}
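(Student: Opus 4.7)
The plan is to prove each implication separately, using the bilinearity of $\Sigma$ in both its arguments (immediate from \eqref{eq:the function sigma}) together with Theorem \ref{thm:sigma(tree,tree)} and the combinatorial interpretation of $\Sigma$ given in Remark \ref{rem:calculation of Sigma}. That interpretation reads $\Sigma_\nu(\mu)$ as a count of the four overlay events of Figure \ref{fig:Calculation-of-sigma}; a vanishing $\Sigma_\nu(\mu)$ therefore asserts that $\mu$ and $\nu$ can be superimposed without producing any of those configurations, which is exactly the combinatorial content of ``clockwise'' as used in \cite{KTW}.

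For the direction (2) $\Rightarrow$ (1), I would proceed by induction on $n$. The base case $n=1$ is immediate since $c_1\mu_1$ is a positive multiple of a rigid tree honeycomb. For the step, the sub-permutation $\nu_2,\dots,\nu_n$ inherits the vanishing condition, so by the induction hypothesis $\mu' = c_2\nu_2+\cdots+c_n\nu_n$ is rigid. Bilinearity combined with the hypothesis yields
\[
\Sigma_{\mu'}(\nu_1) \;=\; \sum_{i=2}^{n} c_i\,\Sigma_{\nu_i}(\nu_1) \;=\; 0.
\]
I would then invoke, or prove as a separate lemma, the statement that whenever $\mu$ and $\nu$ are rigid honeycombs with $\Sigma_\nu(\mu)=0$, the sum $\mu+\nu$ is rigid; applied to $\mu'$ and $c_1\nu_1$ this gives rigidity of $\mu$. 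That lemma is verified via Proposition \ref{prop:rigid in terms of evil}: each of the four events in Remark \ref{rem:calculation of Sigma} is precisely what would produce a six-edge branch point or an evil loop in the combined support, so the vanishing $\Sigma_\nu(\mu)=0$ rules all of them out.

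For the direction (1) $\Rightarrow$ (2), I would build the permutation inductively by extracting, at each stage, a distinguished tree component $\nu_1$ satisfying $\Sigma_{\mu_i}(\nu_1) = 0$ for every $i\ne 1$. To locate $\nu_1$, I would argue by extremality: fix one of the directions $w_1,w_2,w_3$ and pick an outermost ray of $\mu$ in that direction; this ray belongs to exactly one of the $\mu_j$, which we designate $\nu_1$. Because the root ray of $\nu_1$ is extreme, none of the four local configurations of Figure \ref{fig:Calculation-of-sigma} with the roles $(\mu,\nu) = (\mu_i,\nu_1)$ can occur (any such event would require a ray of $\mu_i$ farther out than $\nu_1$'s), so $\Sigma_{\mu_i}(\nu_1)=0$ for each $i\ne 1$. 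One then checks, again via Proposition \ref{prop:rigid in terms of evil}, that the sub-overlay $\mu - c_1\nu_1 = \sum_{j\ge 2} c_j\mu_j$ is still rigid --- peeling the outermost layer cannot produce a new six-edge branch point or evil loop --- so the induction hypothesis provides the remaining permutation $\nu_2,\dots,\nu_n$.

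The main obstacle is the inductive step in the (1) $\Rightarrow$ (2) direction: rigorously establishing that an extreme component $\nu_1$ with the vanishing property always exists and that its removal preserves rigidity. The combinatorial heart of this is the case analysis of the four event types in Figure \ref{fig:Calculation-of-sigma} near the extreme ray, together with the verification that neither obstruction of Proposition \ref{prop:rigid in terms of evil} is introduced when $c_1\nu_1$ is subtracted; this is essentially a ``peeling'' argument that mirrors the clockwise-overlay decomposition of \cite{KTW} but must be carried out for the more general overlays considered here.
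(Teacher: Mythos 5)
The paper does not prove this statement; it quotes it as \cite[Corollary 4.5]{blt} (see also \cite[Theorem 3.8]{bcdlt}), so there is no internal proof to compare against. Judged on its own terms, your proposal has two genuine gaps, one in each direction.

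In the direction (1) $\Rightarrow$ (2), the selection of $\nu_{1}$ as ``the component containing an outermost ray'' breaks down. First, such a ray need not belong to exactly one of the $\mu_{j}$: the paper itself exhibits rigid overlays of two tree honeycombs in which \emph{every} ray of the support carries positive multiplicity from both summands (see the overlay of Figure \ref{fig:honeycomb and stretch} and the examples of Figure \ref{fig:same-exits}), so your selection rule does not even produce a candidate. Second, even when the outermost ray is unshared, extremality of a single ray says nothing about the four events of Figure \ref{fig:Calculation-of-sigma}, which are local configurations occurring at crossings and branch points anywhere in the overlap of the two supports, not comparisons of how far out the rays sit. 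Which summand must be placed first is governed by the global ``clockwise'' structure of the overlay, not by a one-ray extremality criterion; indeed Figure \ref{fig:positive-sigmas} shows two rigid tree honeycombs with both $\Sigma_{\mu}(\nu)>0$ and $\Sigma_{\nu}(\mu)>0$, so no naive geometric tie-break can decide the order in general. (The part of your argument asserting that $\sum_{j\ge2}c_{j}\mu_{j}$ remains rigid is fine: its support is contained in that of $\mu$, and deleting edges can neither create a six-valent branch point nor turn a non-evil turn into an evil one, so Proposition \ref{prop:rigid in terms of evil} applies.)

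In the direction (2) $\Rightarrow$ (1), the induction scheme and the bilinearity computation $\Sigma_{\mu'}(\nu_{1})=\sum_{i\ge2}c_{i}\Sigma_{\nu_{i}}(\nu_{1})=0$ are sound, but the lemma you lean on --- that $\Sigma_{\nu}(\mu)=0$ forces $\mu+\nu$ to be rigid --- is exactly the hard core of the theorem, and your justification for it does not work. The four events of Remark \ref{rem:calculation of Sigma} are not ``precisely what would produce a six-edge branch point or an evil loop'': an evil loop is a global cycle in the union of the two supports, and ruling it out from the vanishing of a count of local crossing events requires showing that any evil loop in $\mu+\nu$ must contain at least one such event; this is a nontrivial winding/alternation argument (the substance of \cite[Theorem 3.8]{bcdlt}), not a pointwise identification. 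There is also a scope issue: the combinatorial interpretation of $\Sigma$ in Remark \ref{rem:calculation of Sigma} is stated for a pair of tree honeycombs given by immersions, whereas in your inductive step one of the two arguments is the general rigid overlay $\mu'$. Both implications therefore need the actual machinery of \cite{blt,bcdlt} rather than the shortcuts proposed.
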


\section{Puzzles and duality \label{sec:Puzzles-and-duality}}

Suppose that $\mu\in\mathcal{M}$. The \emph{puzzle} of $\mu$ is
obtained by a process of \emph{inflation} that we describe next. Cut
the plane along the edges of $\mu$ to obtain a collection of \emph{white
}puzzle pieces and translate these pieces away from each other in
such a way that, given an edge $AB$ of $\mu$, the parallelogram
formed by the two translates of $AB$ has two sides of length $\mu(AB)$
that are $60^{\circ}$ clockwise from $AB$. We call this parallelogram
the \emph{inflation }of\emph{ $AB$} and we color it \emph{dark gray}.
The balance condition for $\mu$ implies that these white and dark
gray pieces fit together and leave a space corresponding to each branch
point of $\mu$. These extra spaces are colored \emph{light gray}.
The light gray piece associated to a branch point $X$ of $\mu$ is
a convex polygon with as many sides as there are edges incident to
$X$. One can recover the honeycomb $\mu$, up to a translation, from
its puzzle by \emph{deflation}. This process consists simply of moving
the white pieces together and replacing each dark gray parallelogram
(or half-infinite strip) by a segment parallel to its white sides
and with multiplicity equal to the length of its light gray side.
There are similar processes of $*$\emph{-inflation} and\emph{ $*$}-\emph{deflation
}which are the same as inflation and deflation except that `clockwise'
is replaced by `counterclockwise' in the construction of the $*$-inflation.

Starting with a nonzero honeycomb $\mu\in\mathcal{M}$, one can construct
the puzzle of $\mu$ and apply $*$-deflation to this puzzle. The
only difficulty is that the segments arising from dark gray infinite
strips would be assigned infinite multiplicity. To overcome this difficulty,
choose a closed equilateral triangle $\Delta$ containing all the
branch points of $\mu$ whose sides are $60^{\circ}$ clockwise from
those rays in the support of $\mu$ that they intersect. Apply now
the process of inflation only to the part of the support of $\mu$
that is contained in $\Delta$ and apply $*$-deflation to the resulting
puzzle. The result of these operations is the part of a honeycomb
$\mu_{\Delta}^{*}\in\mathcal{M}_{*}$ contained in an equilateral
triangle $\Delta_{*}$ with sides equal to $\omega(\mu)$. (A simple
example is illustrated in Figure \ref{fig:honey and dual}.) 
\begin{figure}
\includegraphics{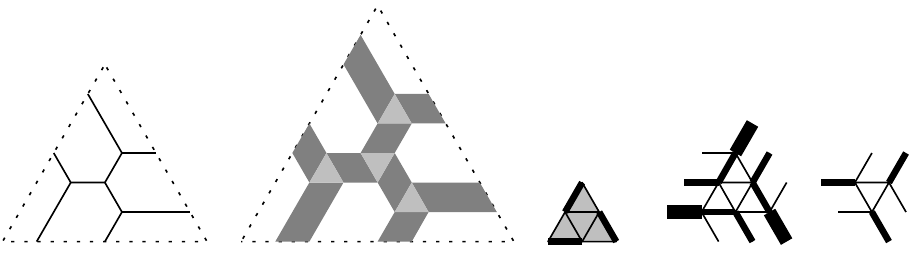}

\caption{\label{fig:honey and dual}A honeycomb $\mu$ in $\Delta$, its puzzle,
the $*$-deflation, and the honeycombs $\mu_{\Delta}^{*}$ and $\mu^{*}$}

\end{figure}
Choosing a different triangle $\Delta$ changes the multiplicities
of $\mu_{\Delta}^{*}$ only on the boundary of $\Delta_{*}$. Denote
by $\Delta_{0}$ the smallest triangle satisfying the requirements
above. We define $\mu^{*}$ to be equal to $\mu_{\Delta_{0}}^{*}$.
The honeycomb $\mu^{*}$ has the property that each side of $\Delta_{*}$
contains a segment with zero multiplicity.

In general, we observe that the sides of $\Delta_{*}$ are divided
by the rays in the support of $\mu^{*}$ into segments with lengths
equal to the exit multiplicities of $\mu$. (The triangle $\Delta_{*}$
is partitioned by the support of $\mu^{*}$ into light gray pieces
that form the \emph{degeneracy graph} of $\mu$ introduced in \cite{kt}.)
If $\mu$ has only one branch point, we define $\mu^{*}=0$.
\begin{rem}
\label{rem:stuff about mu versus mu*} As just observed, the nonzero
exit multiplicities of a honeycomb $\mu$ are precisely the lengths
of the segments on $\partial\Delta_{*}$ between consecutive exit
rays of $\mu^{*}$. For instance, suppose that the support of $\mu^{*}$
is contained in the support of $\nu^{*.}$ If ${\rm exit(\mu)={\rm exit}(\nu)},$then
$\mu^{*}$ and $\nu^{*}$ share their exit rays, and therefore $\mu$
and $\nu$ have precisely the same nonzero exit multiplicities. On
the other hand, if ${\rm exit}(\mu)={\rm exit}(\nu)-1$, then $\mu^{*}$
has one fewer exit ray, and thus $\mu$ has the same nonzero exit
multiplicities, except that the two nonzero exit multiplicities of
$\nu$ corresponding to segments separated by the extra exit ray are
replaced by their sum. (If ${\rm exit}(\mu)$ is even smaller, more
of the multiplicities of $\nu$ are aggregated in this manner into
multiplicities of $\mu$.)
\end{rem}

It is easily seen that a honeycomb $\mu\in\mathcal{M}$ is rigid if
and only if $\mu^{*}\in\mathcal{M}_{*}$ is rigid (see \cite[p. 1591]{bcdlt}).
Given a rigid honeycomb $\mu\in\mathcal{M}$, we denote by ${\rm exit}(\mu)$
the number of nonzero exit multiplicities of $\mu$, and we denote
by ${\rm root}(\mu)$ the number of equivalence classes of root edges
in the support of $\mu$. As noted earlier, $\mu$ is a linear combination
with positive coefficients of ${\rm root}(\mu)$ distinct rigid tree
honeycombs. The following result is equivalent to \cite[Theorem 1.1]{ab}.
(To see the equivalence, observe that the number of \emph{attachment
points} of $\mu$ relative to the minimal triangle $\Delta_{0}$ considered
above is equal to ${\rm exit}(\mu)-3$.)
\begin{thm}
\label{thm:exit(mu)=00003Droot(mu) etc} For every rigid honeycomb
$\mu\in\mathcal{M}$ we have
\[
{\rm root}(\mu)+{\rm root}(\mu^{*})={\rm exit}(\mu)-2.
\]
\end{thm}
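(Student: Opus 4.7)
The plan is to derive this equality from \cite[Theorem 1.1]{ab} via the dictionary indicated in the parenthetical remark following the statement: the number of attachment points of $\mu$ relative to the minimal triangle $\Delta_{0}$ equals ${\rm exit}(\mu)-3$. Once this identification is in place, the statement becomes a direct reformulation of \cite[Theorem 1.1]{ab}, which computes ${\rm root}(\mu)+{\rm root}(\mu^{*})$ in terms of attachment points.

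First I would recall the characterization of $\Delta_{0}$ from Section \ref{sec:Puzzles-and-duality}: it is the smallest equilateral triangle whose sides are $60^{\circ}$ clockwise from the rays they meet, that contains every branch point of $\mu$, and such that each side of the dual triangle $\Delta_{*}$ contains a segment of zero multiplicity for $\mu^{*}$. By Remark \ref{rem:stuff about mu versus mu*}, the exit rays of $\mu^{*}$ divide $\partial\Delta_{*}$ into segments whose lengths are exactly the nonzero exit multiplicities of $\mu$, so there are ${\rm exit}(\mu)$ exit-ray/boundary incidences in total.

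Second, I would verify that exactly three of these incidences, one per corner, are located at the corners of $\Delta_{*}$; the remaining ${\rm exit}(\mu)-3$ are then attachment points in the sense of \cite{ab}. The minimality of $\Delta_{0}$ drives the argument. If some corner had no exit ray of $\mu^{*}$ passing through it, then both sides of $\Delta_{0}$ incident to that corner would carry a zero-multiplicity segment abutting the corner, and one could translate those two sides slightly inward without losing branch-point containment or the zero-multiplicity boundary-segment condition, contradicting the minimality of $\Delta_{0}$. A companion perturbation argument, moving one side inward while adjusting the opposite vertex, rules out two or more exit rays meeting at the same corner. Substituting the resulting count into \cite[Theorem 1.1]{ab} then yields ${\rm root}(\mu)+{\rm root}(\mu^{*})={\rm exit}(\mu)-2$.

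The main obstacle is the corner-count just described. The delicate point is to phrase the minimality of $\Delta_{0}$ precisely enough to read off corner incidences while tracking how the three coupled sides of the rigid equilateral triangle interact under small perturbations; generic edge/corner coincidences have to be excluded, not merely argued against in a picture. I would expect the remainder of the argument to be routine bookkeeping once the identification of attachment points with ${\rm exit}(\mu)-3$ is secured.
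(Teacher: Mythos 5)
Your plan coincides with the paper's own argument, which consists of nothing more than the citation of \cite[Theorem 1.1]{ab} together with the parenthetical dictionary identifying the number of attachment points of $\mu$ relative to $\Delta_{0}$ with ${\rm exit}(\mu)-3$; so the reduction itself is exactly what the paper intends.

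The only place you go beyond the paper is the corner-counting step, and that is where the write-up is shaky. The count of ${\rm exit}(\mu)-3$ interior incidences on $\partial\Delta_{*}$ follows directly from the observation (already stated in Section \ref{sec:Puzzles-and-duality} and in Remark \ref{rem:stuff about mu versus mu*}) that the side of $\Delta_{*}$ in direction $j$ is cut by the rays of $\mu^{*}$ into $k_{j}$ segments, hence carries $k_{j}-1$ interior division points, summing to $k_{1}+k_{2}+k_{3}-3={\rm exit}(\mu)-3$; no perturbation of the triangle is needed for this. What your argument tries to establish instead --- that each corner of $\Delta_{*}$ carries exactly one exit ray of $\mu^{*}$ --- is neither what that count requires nor true in general: for a honeycomb with a single branch point one has $\mu^{*}=0$, so no corner carries any ray, yet the identity ${\rm root}(\mu)+{\rm root}(\mu^{*})=1+0={\rm exit}(\mu)-2$ holds. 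Moreover, the perturbation sketch conflates the two triangles: the exit rays of $\mu^{*}$ live on $\partial\Delta_{*}$, while the sides you propose to translate inward belong to $\Delta_{0}$, and the ``zero-multiplicity segment'' guaranteed by minimality of $\Delta_{0}$ is a segment of $\partial\Delta_{*}$ to which $\mu^{*}$ assigns zero multiplicity, not a gap between exit rays. If the attachment points of \cite{ab} are to be read off from $\mu$ and $\partial\Delta_{0}$ rather than from $\mu^{*}$ and $\partial\Delta_{*}$, then the statement actually in need of verification is how many rays of $\mu$ meet $\partial\Delta_{0}$ at its vertices, and that must be argued from the definition used in \cite{ab}; as written, your corner analysis does not pin this down.
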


In order to study the exit multiplicities of a rigid tree honeycomb,
we require an auxiliary result of convex analysis.
\begin{lem}
\label{lem:affine map}Let $K_{1}$ and $K_{2}$ be two convex sets
such that $K_{2}$ is finite-dimensional, and let $f:K_{1}\to K_{2}$
be a surjective affine map. Suppose that there exists an interior
point $b_{0}\in K_{2}$ such that $f^{-1}(b_{0})$ is a singleton.
Then $f$ is injective.
\end{lem}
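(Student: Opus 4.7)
The plan is to argue by contradiction: assume $x_1 \ne x_2$ in $K_1$ satisfy $f(x_1)=f(x_2)=:b$, and produce two distinct preimages of $b_0$, contradicting the hypothesis. The key geometric idea is to exploit the fact that $b_0$ is interior to $K_2$: this lets us express $b_0$ as a (strictly) convex combination of $b$ and some auxiliary point $c$ of $K_2$ lying on the opposite side of $b_0$. Then, by surjectivity, $c$ has a preimage $w\in K_1$, and by taking convex combinations of $w$ with $x_1$ and with $x_2$ we obtain two distinct preimages of $b_0$.

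In more detail, let $y_0$ denote the unique element of $f^{-1}(b_0)$. Since $K_2$ is finite-dimensional and $b_0$ lies in its (relative) interior, there is $\alpha>0$ such that $c:=b_0+\alpha(b_0-b)$ belongs to $K_2$. Solving, $b_0=\frac{1}{1+\alpha}c+\frac{\alpha}{1+\alpha}b$, which is a convex combination with coefficient $s:=\frac{\alpha}{1+\alpha}\in(0,1)$. By surjectivity of $f$ pick $w\in K_1$ with $f(w)=c$, and set
\[
z_i=(1-s)w+sx_i,\qquad i=1,2.
\]
Both $z_1,z_2$ lie in $K_1$ by convexity, and affinity of $f$ gives $f(z_i)=(1-s)c+sb=b_0$. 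Since $s>0$ and $x_1\ne x_2$, we have $z_1\ne z_2$, so $f^{-1}(b_0)$ contains at least two points, contradicting the hypothesis.

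The only step that uses anything nontrivial is the production of $c$; here \emph{interior} must mean interior in the affine hull of $K_2$ (the only sensible reading given that $K_2$ is finite-dimensional but not assumed to have nonempty interior in any ambient space), and this is where the finite-dimensionality of $K_2$ enters. No real obstacle arises: the entire argument is just the observation that a nontrivial fiber, together with one nontrivial convex combination reaching an interior point, forces the target fiber to be nontrivial.
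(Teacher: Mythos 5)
Your proof is correct and is essentially the same as the paper's: both express $b_0$ as a strict convex combination of the common image point and an auxiliary point of $K_2$ (using that $b_0$ is interior), pull back by surjectivity, and use the singleton fiber over $b_0$ to force the two preimages to coincide. The only difference is presentational (you argue by contradiction, the paper argues directly), so nothing further is needed.
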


\begin{proof}
Let $a_{0}\in K_{1}$ satisfy $f(a_{0})=b_{0}$ and suppose that $a'_{1},a_{1}''\in K_{1}$
are such that $f(a_{1}')=f(a_{1}'')=b_{1}$. Since $b_{0}$ is an
interior point of $K_{2}$, there exist $b_{2}\in K_{2}$ and $t\in(0,1)$
such that $b_{0}=tb_{1}+(1-t)b_{2}$. Choose $a_{2}\in f^{-1}(b_{2})$.
Since $f$ is affine, we deduce that
\begin{align*}
f(ta_{1}'+(1-t)a_{2}) & =tb_{1}+(1-t)b_{2}=b_{0},\\
f(ta_{1}''+(1-t)a_{2}) & =tb_{1}+(1-t)b_{2}=b_{0},
\end{align*}
and thus
\[
ta_{1}'+(1-t)a_{2}=ta_{1}''+(1-t)a_{2}=a_{0}.
\]
It follows immediately that $a'_{1}=a_{1}''$.
\end{proof}
Fix a positive number $r$ and consider an equilateral triangle $\Delta=ABC$
with side length $r$ and sides parallel to $w_{1},w_{2},$ and $w_{3}$.
For simplicity, suppose that $A,B,$ and $C$ are arranged counterclockwise
around $\Delta$ and $AB$ is parallel to $w_{1}$. 
\begin{figure}
\begin{picture}(80,100)
\put(0,0){\includegraphics[scale=1.2]{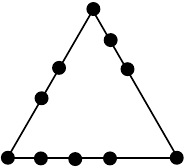}}
\put(5,35){$A_1$}
\put(0,25){$A_2$}
\put(8,-9){$B_1$}
\put(21,-9){$B_2$}
\put(34,-9){$B_3$}
\put(48,30){$C_1$}
\put(43,42){$C_2$}
\end{picture}

\caption{\label{fig:a pattern}The pattern $2,1,2|1,1,1,2|3,1,1$}

\end{figure}
 A \emph{pattern }consists of a sequence $A_{0}=A,A_{1},\dots,A_{k_{1}-1},A_{k_{1}}=B=B_{0},B_{1},\dots,B_{k_{2}-1},B_{k_{2}}=C=C_{0},C_{1},\dots,C_{k_{3}-1},C_{k_{3}}=A$
of points arranged counterclockwise on the boundary of $\Delta$;
see Figure \ref{fig:a pattern}. Alternatively, a pattern is determined
by the distances $\alpha_{1}^{(1)},\dots,\alpha_{k_{1}}^{(1)}|\alpha_{1}^{(2)},\dots,\alpha_{k_{2}}^{(2)}|\alpha_{1}^{(3)},\dots,\alpha_{k_{3}}^{(3)}$
defined by 
\[
\alpha_{j}^{(1)}=\text{length}(A_{j-1},A_{j}),\ \alpha_{j}^{(2)}=\text{length}(B_{j-1},B_{j}),\ \alpha_{j}^{(3)}=\text{length}(C_{j-1},C_{j}).
\]
These positive numbers are subject to
\[
\sum_{j=1}^{k_{1}}\alpha_{j}^{(1)}=\sum_{j=1}^{k_{2}}\alpha_{j}^{(2)}=\sum_{j=1}^{k_{3}}\alpha_{j}^{(3)}=r.
\]

\begin{defn}
We say that a pattern is a \emph{locking pattern} if the following
condition is satisfied: Suppose that $\mu\in\mathcal{M}$ is such
that the branch points of $\mu$ are contained in $\Delta$ and that
the boundary of $\Delta$ only intersects the rays in the support
of $\mu$ in a subset of $\{A_{j}\}_{j=0}^{k_{1}}\cup\{B_{j}\}_{j=0}^{k_{2}}\cup\{C_{j}\}_{j=0}^{k_{3}}.$
Then $\mu$ is rigid.
\end{defn}

It is easy to see that $\mathcal{M}$ can be replaced by $\mathcal{M}_{*}$
in the above definition. Moreover, the above condition need only be
satisfied for honeycombs $\mu\in\mathcal{M}$ with rays intersecting
$\partial\Delta$ in a subset of $\{A_{j}\}_{j=0}^{k_{1}-1}\cup\{B_{j}\}_{j=0}^{k_{2}-1}\cup\{C_{j}\}_{j=0}^{k_{3}-1}$;
such honeycombs are allowed to have a ray meeting $\partial\Delta$
in $A_{0}(=C_{k_{3}})$ that is parallel to the other rays passing
through some $A_{j}$ with $j\ne0$ but not such a ray that is parallel
to the ones passing through some $C_{j}$ for $j\ne k_{3}.$ A similar
observation (with $A_{0},B_{0},C_{0}$ in place of $A_{k_{1}},B_{k_{2}},C_{k_{3}}$)
applies to honeycombs in $\mathcal{M}_{*}$. It is also obvious that
a pattern that has a locking refinement must itself be locking.

Suppose now that $\mu\in\mathcal{M}$ and the only nonzero exit multiplicities
of $\mu$ are $\alpha_{j}^{(1)}=\mu^{(1)}(c_{j}^{(1)})$, $\alpha_{j}^{(2)}=\mu^{(2)}(c_{j}^{(2)})$,
and $\alpha_{j}^{(3)}=\mu^{(3)}(c_{j}^{3})$, where $c_{1}^{(1)}<\cdots<c_{k_{1}}^{(1)}$,
$c_{1}^{(2)}<\cdots<c_{k_{2}}^{(2)}$, and $c_{1}^{(3)}<\cdots<c_{k_{3}}^{(3)}$.
These numbers define a pattern with $r=\omega(\mu)$ that we call
the \emph{exit pattern} of $\mu$.
\begin{prop}
\label{prop:locking pattern} Suppose that $\mu\in\mathcal{M}$ is
a rigid tree honeycomb. Then the exit  pattern of $\mu$ is a locking
pattern.
\end{prop}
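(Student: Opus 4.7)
The strategy is to apply Lemma~\ref{lem:affine map}. Let $K_{1}$ be the convex cone of honeycombs $\nu\in\mathcal{M}$ whose branch points lie in $\Delta$ and whose rays meet $\partial\Delta$ only in $\{A_{j}\}\cup\{B_{j}\}\cup\{C_{j}\}$, and let $f:K_{1}\to\mathbb{R}^{k_{1}+k_{2}+k_{3}}$ be the linear (hence affine) map sending $\nu$ to its vector of exit multiplicities at the allowed positions. Set $K_{2}=f(K_{1})$, which is a finite-dimensional convex cone. Since $\mu\in K_{1}$ is rigid, $f^{-1}(f(\mu))=\{\mu\}$.

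If $b_{0}=f(\mu)$ is a relative interior point of $K_{2}$, then Lemma~\ref{lem:affine map} forces $f$ to be injective on $K_{1}$, and this injectivity is exactly the locking property. Indeed, given any $\nu\in K_{1}$ and any $\nu'\in\mathcal{M}$ with the same exit multiplicities as $\nu$, the rays of $\nu'$ lie on the same lines $x_{i}=c_{j}^{(i)}$; a translation (which preserves exit multiplicities) placing the branch points in $\Delta$ then puts $\nu'\in K_{1}$, and injectivity gives $\nu'=\nu$, so $\nu$ is rigid.

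The technical heart is therefore verifying the interior-point claim for $b_{0}=f(\mu)$. The affine hull of $K_{2}$ is contained in the codimension-three subspace of $\mathbb{R}^{k_{1}+k_{2}+k_{3}}$ cut out by the two weight-equality relations and the trace identity~(\ref{eq:trace identity}); since every coordinate $\alpha_{j}^{(i)}$ of $f(\mu)$ is strictly positive by definition of the exit pattern, interiorness reduces to the dimensional statement that $K_{2}$ fills this subspace near $f(\mu)$. The expected dimension, ${\rm exit}(\mu)-3$, matches ${\rm root}(\mu^{*})$ via Theorem~\ref{thm:exit(mu)=00003Droot(mu) etc}; I would use the decomposition of $\mu^{*}\in\mathcal{M}_{*}$ into its rigid tree summands, together with the duality between $\mathcal{M}$ and $\mathcal{M}_{*}$, to produce honeycombs in $K_{1}$ realising the necessary perturbations of $f(\mu)$. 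Constructing enough such honeycombs to span the codimension-three subspace near $f(\mu)$, and confirming that $f(\mu)$ lies strictly inside the resulting cone rather than on a facet, is the principal obstacle of the argument.
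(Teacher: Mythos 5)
Your overall strategy---apply Lemma \ref{lem:affine map} to the affine map sending a honeycomb to its vector of exit multiplicities at the pattern points---is the same as the paper's, but there is a genuine gap at the very first step: the claim that $\mu\in K_{1}$. The points $A_{j},B_{j},C_{j}$ of the exit pattern are spaced along $\partial\Delta$ according to the exit \emph{multiplicities} $\alpha_{j}^{(i)}$ of $\mu$, whereas the rays of $\mu$ itself meet the boundary of a triangle containing its branch points at points spaced according to the differences $c_{j+1}^{(i)}-c_{j}^{(i)}$ of the ray \emph{positions}. These two sets of spacings are unrelated in general (homologous honeycombs share all exit multiplicities but have different ray positions), and moreover $\Delta$ has side $\omega(\mu)$ while the branch points of $\mu$ need not fit inside a triangle that small. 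So $\mu$ does not lie in your $K_{1}$, and the singleton-fiber hypothesis of Lemma \ref{lem:affine map} is never actually verified for your map. The honeycomb whose rays really do pass through the pattern points on a triangle of side $\omega(\mu)$ is the dual $\mu^{*}\in\mathcal{M}_{*}$ (this is exactly how $\Delta_{*}$ is subdivided), which is why the paper works with the $\mathcal{M}_{*}$ version of the locking condition, takes $K_{1}\subset\mathcal{M}_{*}$, and uses $b_{0}=f(\mu^{*})$ together with the fact that $\mu^{*}$ is rigid because $\mu$ is.

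The second problem is that you leave the interior-point verification as ``the principal obstacle'' rather than carrying it out. Once one works with $\mu^{*}$, this step is short: write $\mu^{*}=\sum_{j=1}^{n}c_{j}\rho_{j}$ with the $\rho_{j}$ rigid tree honeycombs and $n={\rm root}(\mu^{*})={\rm exit}(\mu)-3$ by Theorem \ref{thm:exit(mu)=00003Droot(mu) etc}; every $\sum_{j}t_{j}\rho_{j}$ with $t_{j}>0$ lies in $K_{1}$ and is rigid, so $f$ is injective on this $n$-parameter family and $\dim K_{2}\ge n$; the constraints (\ref{eq:omega is defined}) and (\ref{eq:trace identity}) give $\dim K_{2}\le n$; hence $f(\mu^{*})$ lies in an $n$-dimensional open subset of the $n$-dimensional convex set $K_{2}$ and is therefore interior. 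Your plan of ``producing honeycombs in $K_{1}$ realising the necessary perturbations'' by duality is, in substance, this argument transported back to $\mathcal{M}$, but as written it is not completed, and the strict positivity of the coordinates of the exit vector is not by itself enough to conclude interiority. Finally, your reduction of the locking property to injectivity invokes ``a translation (which preserves exit multiplicities)''; a translation shifts the positions $c$ at which $\mu^{(j)}(c)\ne0$, so this phrase should be replaced by the observation that a honeycomb with the same exit multiplicities as some $\nu\in K_{1}$ has its rays on the same lines and its branch points automatically inside $\Delta$, hence already belongs to $K_{1}$.
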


\begin{proof}
Choose rigid tree honeycombs $\rho_{1},\dots,\rho_{n}\in\mathcal{M}_{*}$
and $c_{1},\dots,c_{n}>0$ such that $\mu^{*}=\sum_{j=1}^{n}c_{j}\rho_{j}$.
Theorem \ref{thm:exit(mu)=00003Droot(mu) etc} and the hypothesis
imply that $n=\text{exit}(\mu)-3$. Recall that all the branch points
of $\mu^{*}$ are contained in a triangle $\Delta_{*}$ with side
lengths $\omega(\mu)$ and that the rays of $\mu^{*}$ determine precisely
the exit  pattern of $\mu$ on the boundary of $\Delta_{*}$. Denote
by $K_{1}$ the collection of all honeycombs in $\mathcal{M}_{*}$
whose branch points are contained in $\Delta_{*}$ and whose rays
intersect the boundary of $\Delta_{*}$ in a subset of $\{A_{j}\}_{j=1}^{k_{1}}\cup\{B_{j}\}_{j=1}^{k_{2}}\cup\{C_{j}\}_{j=1}^{k_{3}}.$
Observe that $k_{1}+k_{2}+k_{3}={\rm exit}(\mu)$. Given $\nu\in K_{1}$,
denote by $f(\nu)\in\mathbb{R}^{{\rm exit}(\mu)}$ the vector $(a_{1},\dots,a_{k_{1}},b_{1},\dots,b_{k_{2}},c_{1},\dots,c_{k_{3}})$,
where $a_{j}$ is the exit multiplicity that $\nu$ assigns to the
ray passing through $A_{j}$, and $b_{j},c_{j}$ are defined similarly.
Denote by $K_{2}\subset\mathbb{R}^{{\rm exit}(\mu)}$ the range of
$f$. In order to apply Lemma \ref{lem:affine map}, we verify that
$f(\mu^{*})$ is an interior point of $K_{2}$. First, we note that
the dimension of $K_{2}$ is at most ${\rm exit}(\mu)-3$ because
every honeycomb must satisfy (\ref{eq:omega is defined}) and (\ref{eq:trace identity}).
On the other hand, $f$ is injective on the set $S=\{\sum_{j=1}^{n}t_{j}\rho_{j}:t_{1},\dots,t_{n}>0\}$
and thus $f(K_{1})$ has dimension at least ${\rm exit}(\mu)-3$.
It follows that $K_{2}$ has dimension exactly ${\rm exit}(\mu)-3$
and that each point in $f(S)$ is an interior point of $K_{2}$. The
proposition follows from Lemma \ref{lem:affine map}.
\end{proof}
The multiplicity pattern of a rigid tree honeycomb has an additional
property that helps determine whether a given pattern arises from
such a honeycomb. Suppose that an arbitrary pattern $A_{0},\dots,A_{k_{1}},B_{0},\dots,B_{k_{2}},C_{0},\dots C_{k_{3}}$
is given on a triangle $\Delta$ with sides parallel to $w_{1},w_{2},$
and $w_{3}$. A convex region $\Omega\subset\Delta$ is said to be
\emph{flat }relative to this pattern if every honeycomb $\mu\in\mathcal{M}$,
whose  rays intersect $\partial\Delta$ in a subset of $\{A_{j},B_{j},C_{j}\}$,
assigns zero multiplicity to all segments contained in the interior
of $\Omega$.
\begin{prop}
\label{prop:flat pieces}Let $\mu\in\mathcal{M}$ be a rigid tree
honeycomb. Then the light gray pieces determined in $\Delta_{*}$
by the support of $\mu^{*}$ are flat relative to the exit  pattern
of $\mu$.
\end{prop}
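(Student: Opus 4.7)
The plan is to reduce the claim to a linear algebra statement within the framework of the proof of Proposition~\ref{prop:locking pattern}. Decompose $\mu^{*}=\sum_{j=1}^{n}c_{j}\rho_{j}$ as a positive combination of distinct rigid tree honeycombs $\rho_{j}\in\mathcal{M}_{*}$, where $n={\rm exit}(\mu)-3$ by Theorem~\ref{thm:exit(mu)=00003Droot(mu) etc}. Since $\text{supp}(\rho_{j})\subseteq\text{supp}(\mu^{*})$ avoids the interior of every light gray piece, for any fixed segment $I$ contained in the interior of such a piece $L$ and parallel to some $w_{k}$, the linear functional $g(\nu):=\nu(I)$ (defined on the ambient space of signed honeycomb-like measures) satisfies $g(\rho_{j})=0$ for every $j$, so $g$ vanishes on the linear span $V:=\text{span}\{\rho_{1},\dots,\rho_{n}\}$.

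Let $K_{1}$ denote the convex set of honeycombs $\nu\in\mathcal{M}_{*}$ with branch points in $\Delta_{*}$ and rays meeting $\partial\Delta_{*}$ only at points of the exit pattern of $\mu$, and let $f\colon K_{1}\to K_{2}\subseteq\mathbb{R}^{{\rm exit}(\mu)}$ be the affine exit-multiplicity map. The proof of Proposition~\ref{prop:locking pattern}, via Lemma~\ref{lem:affine map}, showed that $f$ is an affine bijection and that $K_{2}$, and therefore $K_{1}$, has affine dimension $n$. That same proof also showed that $f$ is injective on $S:=\{\sum_{j}t_{j}\rho_{j}:t_{j}>0\}$, which forces $\{f(\rho_{j})\}$, and hence $\{\rho_{j}\}$, to be linearly independent. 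The closed cone $\bar S:=\{\sum_{j}t_{j}\rho_{j}:t_{j}\ge 0\}$ is contained in $K_{1}$ and contains both $0$ and the $n$ linearly independent vectors $\rho_{j}$, so its affine span equals $V$. Since $\bar S\subseteq K_{1}$ and both have affine dimension $n$, their affine spans coincide, giving $K_{1}\subseteq V$.

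Combining the two paragraphs yields $g(\nu)=0$ for every $\nu\in K_{1}$, which is precisely the claimed flatness of $L$. The main technical step is the inclusion $K_{1}\subseteq V$: it uses the surjectivity of $f$ (to bound $\dim K_{1}$ from above by $n$) together with the linear independence of the $\rho_{j}$ (to exhibit $n$ elements of $K_{1}$ whose affine span is already $V$). The degenerate case ${\rm exit}(\mu)=3$, in which $\mu^{*}=0$ and $\Delta_{*}$ is a single point, is vacuous.
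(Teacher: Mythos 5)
Your argument is correct in substance, but it takes a genuinely different route from the paper's. The paper proves Proposition \ref{prop:flat pieces} by a local combinatorial induction: it calls a segment flat if no admissible honeycomb crosses it transversally at an interior point (condition $(\dagger)$), verifies three propagation rules at the branch points of $\mu$, and deduces along descendance paths that every dual edge $e^{*}$ is flat, whence so are the light gray pieces they bound. You instead reuse the sets $K_{1},K_{2}$ and the map $f$ from the proof of Proposition \ref{prop:locking pattern} and make a global dimension count showing that the affine span of $K_{1}$ equals $V=\mathrm{span}\{\rho_{1},\dots,\rho_{n}\}$, so that every admissible honeycomb is a signed combination of the $\rho_{j}$ and hence charges no segment in the complement of the support of $\mu^{*}$. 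This is shorter and in fact proves slightly more (every $\nu\in K_{1}$ is supported in the support of $\mu^{*}$, which also yields the $(\dagger)$-flatness of each $e^{*}$); what it does not deliver is the constructive stage-by-stage propagation of flatness from $\partial\Delta_{*}$ inward, which is precisely what the paper exploits right after the proposition to obtain an algorithm for testing patterns. One step of yours needs repair: the linear independence of $\rho_{1},\dots,\rho_{n}$ does not follow from the injectivity of $f$ on $S$ (that injectivity carries no information about independence if $S$ itself has dimension less than $n$); it requires a separate justification, for instance that $\rho_{i}$ assigns positive multiplicity to a root edge of the $j$-th equivalence class only when $i=j$ --- the same fact is tacitly used in the paper's own proof of Proposition \ref{prop:locking pattern} to conclude that $f(K_{1})$ has dimension at least $\mathrm{exit}(\mu)-3$, so this is a shared, easily patched omission rather than a flaw specific to your approach. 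Finally, when $\mathrm{exit}(\mu)=3$ one has $\omega(\mu)=1$ and $\Delta_{*}$ is a unit triangle rather than a single point, so that case is not vacuous; your argument still covers it, since then $K_{2}=\{0\}$ and $f^{-1}(0)=\{0\}$ force $K_{1}=\{0\}$.
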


\begin{proof}
We denote by $\mathcal{N}\subset\mathcal{M}_{*}$ the collection of
those honeycombs that have branch points contained in $\Delta_{*}$
and exit points among $A_{0},\dots,A_{k_{1}},B_{0},\dots,B_{k_{2}},C_{0},\dots C_{k_{3}}$.
Suppose that $I\subset\Delta_{*}$ is a segment parallel to $w_{j}$
for some $j=1,2,3$. We say that $I$ is \emph{flat} if the following
condition is satisfied for every $\nu\in\mathcal{N}$:
\begin{enumerate}
\item [($\dagger$)]There is no edge $J$ of $\nu$ such that $J\cap I$
is a single point in the interior of $I$.
\end{enumerate}
If $AB$ and $AC$ are two flat edges, each parallel to $w_{1},w_{2},$
or $w_{3},$ and $\varangle BAC=60^{\circ}$, then the smallest convex
polygon with sides parallel to $w_{1},w_{2},$ or $w_{3}$ containing
$A,B,$ and $C$ is flat relative to the exit pattern of $\mu$. This
polygon is either an equilateral triangle or a trapezoid. It suffices
to show that all the edges of $\mu^{*}$ in $\Delta_{*}$ are flat.
These edges are of the form $e^{*}$, with $e$ an edge of $\mu$,
where $e^{*}$ is a segment of length $\mu(e)$ that is $60^{\circ}$
clockwise from \emph{$e$. }Denote by $S$ the set of all edges of
$\mu$ and by $S_{1}\subset S$ the collection of those edges $e$
with the property that $e^{*}$ is flat. It is clear that the rays
of $\mu$ belong to $S_{1}$. We show that the set $S_{1}$ has the
following properties (see Figure \ref{fig:e1e2e3}):
\begin{figure}
\begin{picture}(80,100)
\put(0,0){\includegraphics[scale=2]{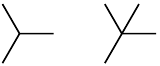}}
\put(-3,-5){$e_1$}
\put(34,17){$e_2$}
\put(-3,39){$e_3$}
\put(55,-5){$e_1$}
\put(92,17){$e_2$}
\put(55,39){$e_3$}
\put(80,37){$e_4$}
\end{picture}

\caption{\label{fig:e1e2e3}The edges $e_{j}$}

\end{figure}
\begin{enumerate}
\item If $e_{1},e_{2},$ and $e_{3}$ are the only three edges meeting at
a branch point, and if $e_{1},e_{2}\in S_{1}$, then $e_{3}\in S_{1}$.
\item If $e_{1},e_{2},e_{3}$, and $e_{4}$ are four edges meeting at a
branch point of $\mu$ such that $\mu(e_{1})=\mu(e_{2})+\mu(e_{4})$,
and if $e_{1},e_{2}\in S_{1}$, then $e_{3},e_{4}\in S_{1}$.
\item If $e_{1},e_{2},e_{3}$, and $e_{4}$ are four edges meeting at a
branch point of $\mu$ such that $\mu(e_{1})=\mu(e_{2})+\mu(e_{4})$
and $e_{1}$ is collinear with $e_{4}$, and if $e_{1}\in S_{1}$,
then $e_{4}\in S_{1}$.
\end{enumerate}
Supposing for the moment that properties (1)--(3) have been proved,
we argue by contradiction that $S_{1}=S$. If $S_{1}\ne S$, choose
an edge $e\in S\backslash S_{1}$ with the property that the length
of a descendance path from some root edge to $e$ is the largest possible.
All the descendants of $e$ have longer descendance paths from the
same root edge and hence they belong to $S_{1}$. Since $e$ is not
a ray, it must have descendants. There are three possibilities:
\begin{enumerate}
\item [(a)]$e$ has two descendants $e_{2}$ and $e_{3}$ and there is
no other edge adjacent to their common endpoint. This possibility
is excluded by property (1) above because $e_{2},e_{3}\in S_{1}$.
\item [(b)]$e$ has two descendants $e_{1},e_{3}$, and there is a fourth
edge $e_{4}$ adjacent to their common endpoint. This possibility
is excluded by (2) for similar reasons.
\item [(c)]$e$ has only one descendant $e_{1}$, in which case there must
exist $e_{2}$ and $e_{3}$ adjacent to their common point such that
$\mu(e_{1})=\mu(e)+\mu(e_{2})$. This is excluded by (3).
\end{enumerate}
All three possibilities being excluded, we conclude that $S_{1}=S$.

Finally, we verify properties (1)--(3). In case (1), the edges $e_{1}^{*},e_{2}^{*},$
and $e_{3}^{*}$ form an equilateral triangle. Suppose that $\nu\in\mathcal{N}$
has an edge that intersects $e_{3}^{*}$ in a single interior point.
It is easy to see that in this case there must also be some edge of
$\nu$ that intersects either $e_{1}^{*}$ or $e_{2}^{*}$ in its
interior, thus contradicting the flatness of these segments.

In cases (2) and (3), the edges $e_{1}^{*}$ and $e_{4}^{*}$ are
the two parallel edges of an isosceles trapezoid and $e_{2}^{*},e_{3}^{*}$
are the other two edges. Suppose that $\nu\in\mathcal{N}$ has an
edge that intersects $e_{3}^{*}$ in a single interior point. Then
$\nu$ must also have an edge that intersects either $e_{1}^{*}$
or $e_{2}^{*}$ in a single interior point, so $e_{3}\in S_{1}$ if
$e_{1},e_{2}\in S_{1}$. Finally, suppose that $\nu\in\mathcal{N}$
has an edge that intersects $e_{4}^{*}$ in a single interior point.
Then $\nu$ must also have an edge that intersects $e_{1}^{*}$ in
a single interior point, so $e_{4}\in S_{1}$ if $e_{1}\in S_{1}$.
This concludes the proof.
\end{proof}
The last two results indicate an algorithm for checking whether a
given pattern arises from the exit multiplicities of a rigid tree
honeycomb. Thus, starting with a pattern, we construct flat segments
and flat regions in stages, as follows. At stage $0$, we have the
collection of flat segments determined by the pattern on the boundary
of the triangle. Suppose that the flat segments and flat regions of
stage $k$ have been constructed, and let $AB$ and $CD$ be two nonparallel
flat edges that appear in this configuration. Suppose that $AB$ and
$CD$ lie on different sides of a $60^{\circ}$ angle, and that the
interiors of two equilateral triangles $\Delta_{1}$ and $\Delta_{2}$,
based on $AB$ and $CD$ respectively, intersect. Then we construct
segments $A'B'\subset\Delta_{1}$ parallel to $AB$ with $A',B'\in\partial\Delta_{1}$
and $C'D'\subset\Delta_{2}$ parallel to $CD$ with $C',D'\subset\partial\Delta_{2}$
(see Figure \ref{fig:Flat-regions}) 
\begin{figure}
\begin{picture}(160,100)
\put(0,0){\includegraphics[scale=1.5]{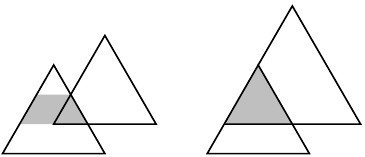}}
\put(-8,0){$A$}
\put(47,0){$B$}
\put(81,0){$A$}
\put(135,0){$B$}
\put(0,15){$A'$}
\put(38,16){$B'$}
\put(89,15){$A'$}
\put(127,16){$B'$}
\put(68,15){$D$}
\put(157,15){$D$}
\put(43,55){$C$}
\put(123,68){$C$}
\put(27,34){$C'$}
\put(104,41){$C'$}
\end{picture}\caption{Flat regions\label{fig:Flat-regions}, $D'=B'$}

\end{figure}
that have one endpoint in common. Denote by $\Omega$ the smallest
trapezoid (or equilateral triangle) with edges parallel to $w_{1},w_{2,}$
or $w_{3}$ that contains $A'B'$ and $C'D'$. Then, in stage $k+1$
we declare that $\Omega$ and all of its sides are flat. If the pattern
is in fact the exit  pattern of a rigid tree honeycomb, then the preceding
result shows that this process ends in a finite number of stages and
that the flat regions it creates (which are only equilateral triangles
and trapezoids) tile the original triangle. Moreover, the shortest
flat segments, corresponding to root edges of the rigid tree honeycomb,
must have unit length. 

In order to verify that a given rigid pattern, for which the preceding
process yields a collection of flat triangles and trapezoids that
tile the entire triangle $\Delta$, really arises as the exit  pattern
of some rigid tree honeycomb $\mu$, we must construct $\mu$ or some
honeycomb homologous to it. One way to do this is to find a honeycomb
$\nu$ in $\mathcal{M}_{*}$ whose support consists of the flat segments
created in the process, construct $\mu$ such that $\mu^{*}=\nu$,
and verify that it is in fact a rigid tree honeycomb. This may be
a laborious process. Fortunately, there exists a simpler process that
yields such as honeycomb $\mu$: each flat region being a triangle
or a trapezoid, it has a circumcenter. The segment joining the circumcenters
of two adjacent flat regions is part of the perpendicular bisector
of the common edge. Add to these segments rays contained in the perpendicular
bisector of each flat segment in $\partial\Delta$ terminating at
the circumcenter of the adjacent flat region. Assign to each bisecting
segment a multiplicity equal to the length of the original flat segment.
This way, we obtain a honeycomb, rotated counterclockwise by $30^{\circ}$.
It is fairly easy to check whether this is a rigid tree honeycomb
since we know what its root edges are.
\begin{figure}
\begin{picture}(200,100)
\put(0,0){\includegraphics[scale=1.5]{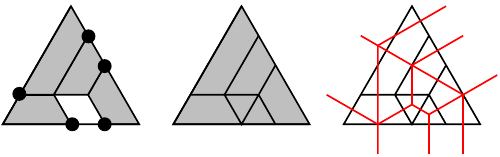}}
\put(102,18){$2$}
\put(109,15){$2$}

\put(22,39){$1$}
\put(13,17){$1$}
\put(45,19){$1$}
\put(33,29){$1$}
\end{picture}

\caption{Determining flat regions\label{fig:Determining-flat-regions}}

\end{figure}

Figure \ref{fig:Determining-flat-regions} illustrates the two above
processes for the pattern $3,1|2,1,1|2,1,1$. The flat regions are
determined in two stages, they tile $\Delta$, and the shortest flat
edges have unit length. However, the (red) honeycomb constructed using
circumcenters is rigid but not extreme because not all the root edges
are equivalent. For the pattern $1,2,1|1,2,1|1,2,1$ shown in Figure
\ref{fig:Flat-regions-do-not-cover}, all the flat regions are determined
in stage $1$ and they do not cover $\Delta$, so this is not the
exit  pattern of a rigid tree honeycomb.
\begin{figure}
\includegraphics[scale=1.5]{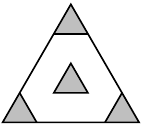}

\caption{\label{fig:Flat-regions-do-not-cover}Flat regions do not cover $\Delta$}

\end{figure}
 On the other hand, for the pattern $4,2|2,2,2|2,2,2$ the flat regions
do cover $\Delta$ but the shortest flat edge has length $2$. In
this case, the circumcenter construction yields twice a rigid tree
honeycomb. There are locking patterns for which the flat regions created
by the method described above overlap and thereby create greater flat
regions that may not by triangles or trapezoids. Such an example is
$3,2|1,3,2|2,2,1$, illustrated in Figure \ref{fig:A-rigid-pattern}.
\begin{figure}
\includegraphics{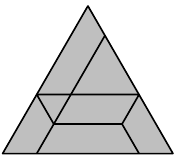}

\caption{\label{fig:A-rigid-pattern}A rigid pattern}

\end{figure}

Suppose that $\mu\in\mathcal{M}$ is a rigid tree honeycomb and its
dual is written as in the above proof $\mu^{*}=\sum_{j=1}^{n}c_{j}\rho_{j}$.
If $t_{1},\dots,t_{n}>0$ and $\nu\in\mathcal{M}$ is such that $\nu^{*}=\sum_{j=1}^{n}t_{j}\rho_{j}$
then $\mu$ and $\nu$ are homologous in the sense defined in \cite{blt}.
In other words, up to a translation, $\nu$ can be obtained by stretching
or shrinking the edges of $\mu$ while preserving their multiplicities
(see Figure \ref{fig:Two-homologous-honeycombs}).
\begin{figure}
\includegraphics[scale=0.7]{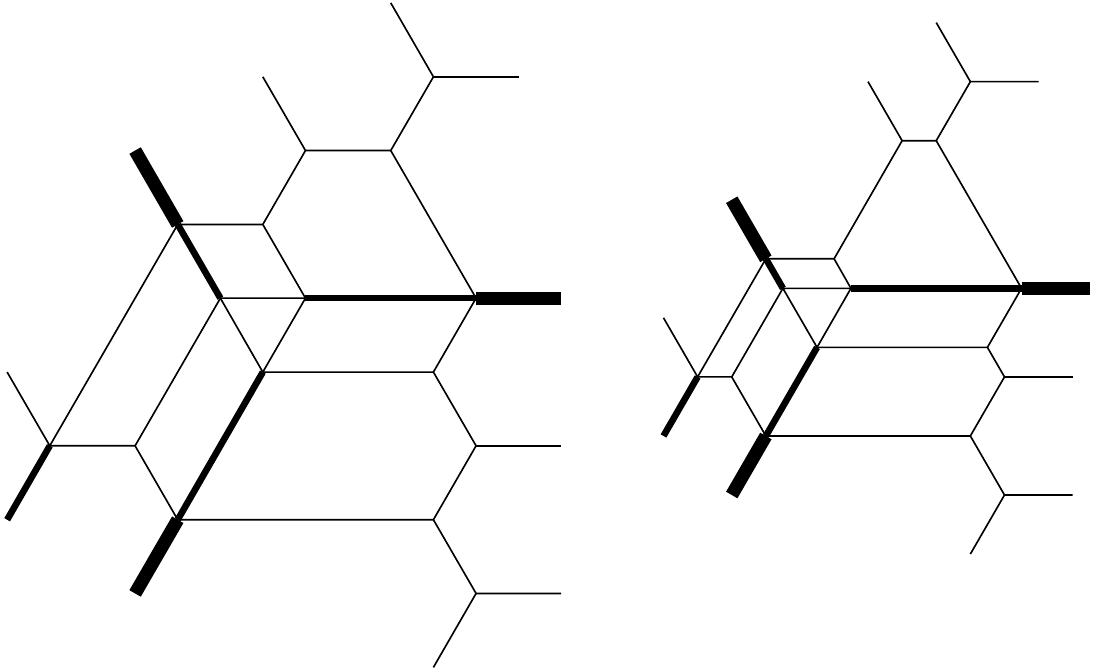}

\caption{\label{fig:Two-homologous-honeycombs}Two homologous honeycombs with
$\omega=6$}

\end{figure}
 Conversely, if $\nu$ is an arbitrary rigid tree honeycomb of weight
$\omega(\mu)$ that determines the same pattern on $\Delta_{*}$,
then $\nu$ must satisfy $\nu^{*}=\sum_{j=1}^{n}t_{j}\rho_{j}$ for
some $t_{1},\dots,t_{n}>0$. Thus, in order to classify the rigid
tree honeycombs of a given weight $\omega$, it suffices to find the
locking patterns on a triangle $\Delta$ with sides equal to $\omega$
that also correspond to rigid tree honeycombs. There is only a finite
number of such patterns because the lengths of the segments in the
boundary of $\Delta$ determined by the relevant patterns are exit
multiplicities of a tree honeycomb and are therefore integers.

Let $\mu\in\mathcal{M}$ be a rigid tree honeycomb of weight $\omega$.
According to (\ref{eq:omega square +2}), the integers $\mu^{(j)}(c)$,
$j=1,2,3,c\in\mathbb{R}$, are subject to the following requirements:

\begin{align*}
\sum_{c\in\mathbb{R}}\mu^{(j)}(c) & =\omega,\quad j=1,2,3,\\
\sum_{c\in\mathbb{R}}[\mu^{(1)}(c)^{2}+\mu^{(2)}(c)^{2}+\mu^{(3)}(c)^{2}] & =\omega^{2}+2.
\end{align*}
If we arrange the nonzero exit multiplicities of $\mu$ in a nonincreasing
list $\alpha_{1},\dots,\alpha_{N}$, we have
\begin{equation}
\sum_{j=1}^{N}\alpha_{j}=3\omega,\quad\sum_{j=1}^{N}\alpha_{j}^{2}=\omega^{2}+2.\label{eq:diophantus}
\end{equation}
This system of Diophantine equations has finitely many solutions that
are fairly easy to list. We observe that subtracting the first equation
above from the second we obtain
\begin{equation}
\sum_{j=1}^{N}\frac{\alpha_{j}(\alpha_{j}-1)}{2}=\frac{(\omega-1)(\omega-2)}{2}.\label{eq:triangular sum}
\end{equation}
In other words, we need to write the triangular number $(\omega-1)(\omega-2)/2$
as a sum of triangular numbers, and this sum will determine precisely
the values of those $\alpha_{j}$ that are greater than $1$. Once
numbers $\alpha_{j}$ have been determined, there may be several ways
that they can be arranged to make a pattern on a triangle with sides
$\omega$. For each of these arrangements, we can use the algorithm
outlined above to determine whether it is the exit  pattern of a rigid
tree honeycomb. We illustrate this for small values of $\omega$.
For $\omega\le2$ we have $(\omega-1)(\omega-2)/2=0$ and thus $\alpha_{j}=1$
for all $j$. The corresponding types of rigid tree honeycombs are
pictured in Figure \ref{fig:Rigid-tree-homeycombs-small-omega}(A)
and (B).
\begin{figure}
\subfloat[$\omega=1$]{\includegraphics{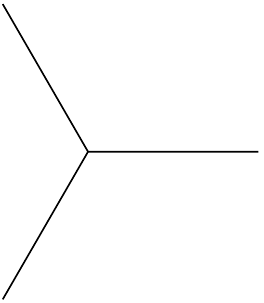}}\subfloat[$\omega=2$]{

\includegraphics[scale=0.5]{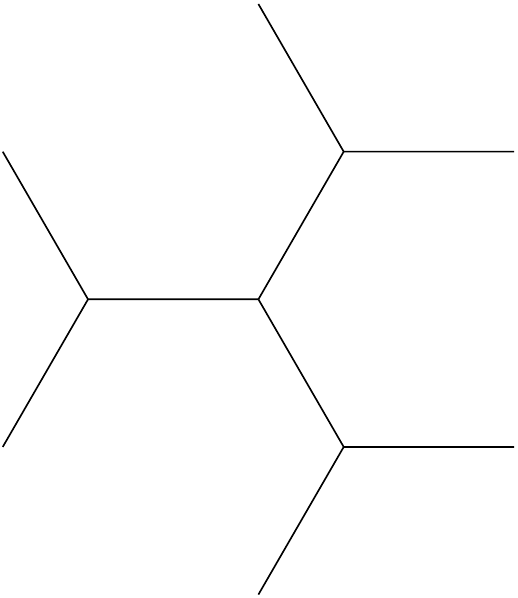}}\subfloat[$\omega=3$]{

\includegraphics[scale=0.4]{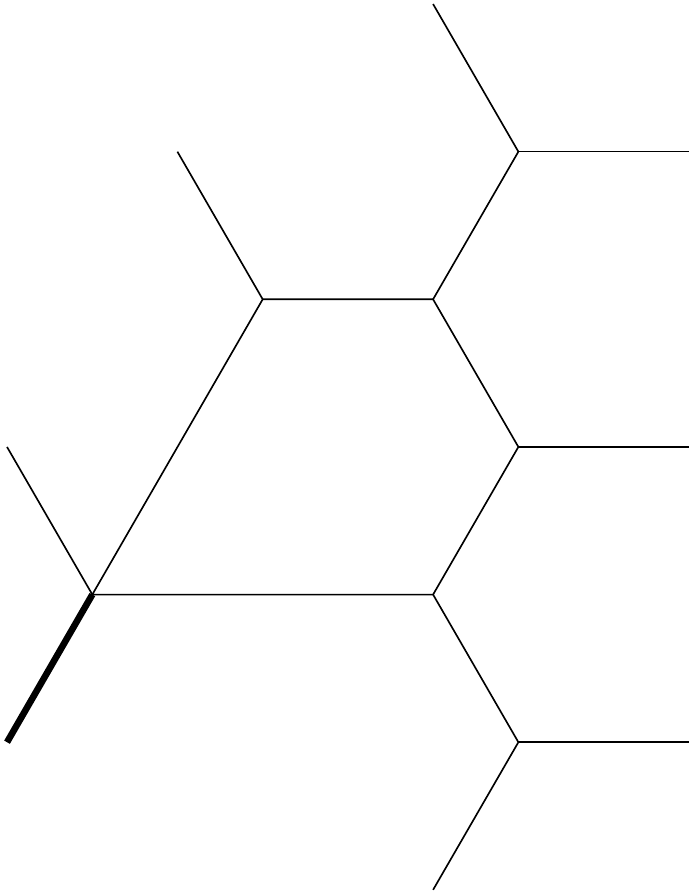}

}

\caption{\label{fig:Rigid-tree-homeycombs-small-omega}Rigid tree honeycombs
with small weight}

\end{figure}
 For $\omega=3$ we have $(\omega-1)(\omega-2)/2=1$ so the only numbers
$\alpha_{j}$ satisfying (\ref{eq:diophantus}) are $2,1,1,1,1,1,1,1$.
These lengths can be arranged around a triangle $\Delta$ in six different
ways but all these arrangements can be obtained from one of them by
rotations or reflections. It is seen immediately that these arrangements
are in fact the exit  patterns of rigid tree honeycombs. Thus, there
are exactly six types of rigid tree honeycombs of weight 3, all of
which can be obtained by rotations and reflections from the one pictured
in Figure \ref{fig:Rigid-tree-homeycombs-small-omega}(C) (the thicker
edge has multiplicity 2).

For $\omega=4$, we have $(\omega-1)(\omega-2)/2=3$, and this can
be written as $3=3$ and $3=1+1+1$. The corresponding solutions $\alpha_{j}$
are 
\begin{align*}
3,1,1,1,1,1,1,1,1,1, & \text{ and}\\
2,2,2,1,1,1,1,1,1.
\end{align*}
We find that there are $44$ distinct types of rigid tree honeycombs,
all of which can be obtained from one of the eight depicted in
\begin{figure}
\includegraphics[scale=0.2]{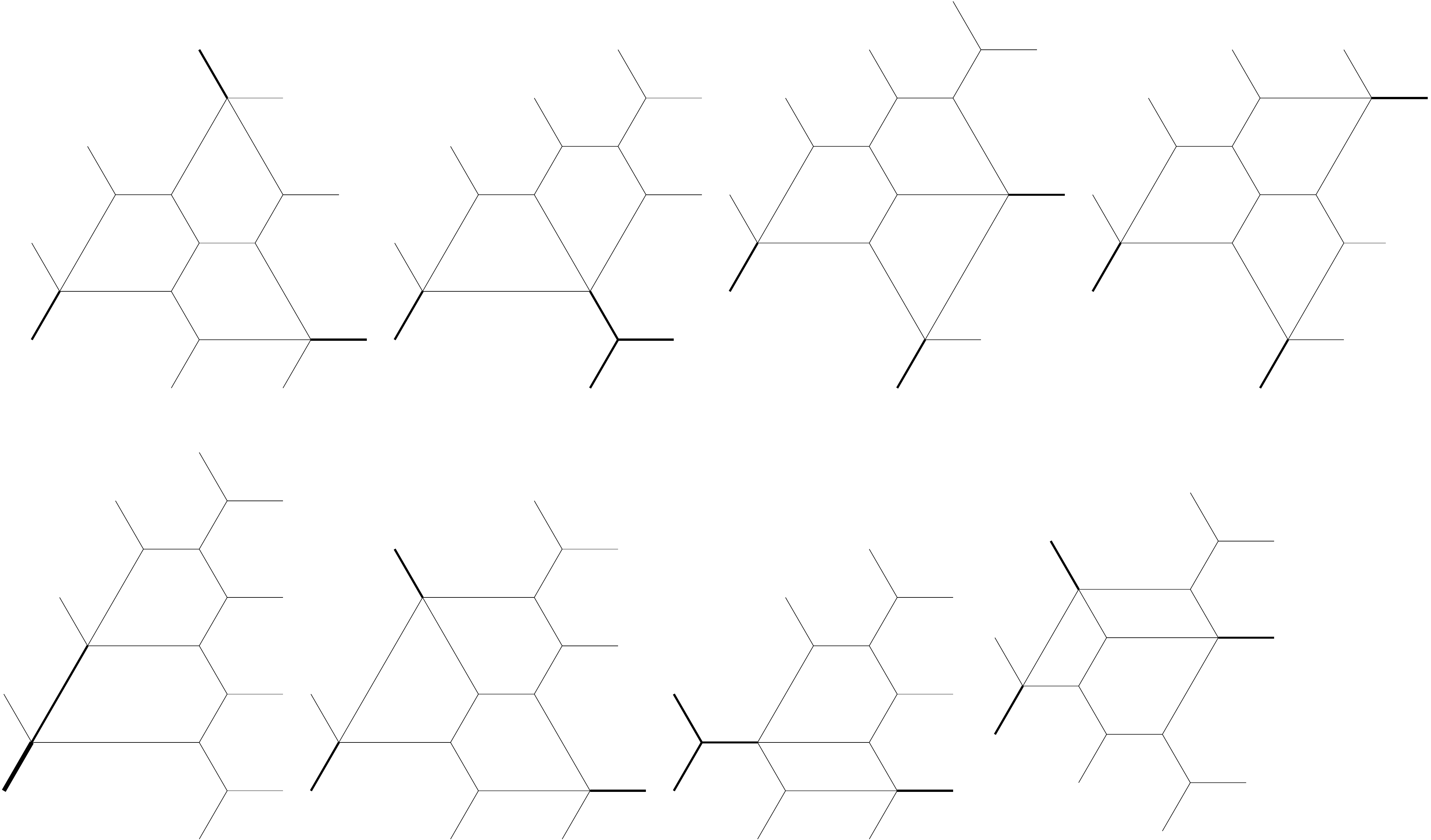}

\caption{\label{fig:honey with omega=00003D4}Rigid tree honeycombs with $\omega=4$}

\end{figure}
 Figure \ref{fig:honey with omega=00003D4} by rotations and reflections.
Each of the types pictured, except for the first one, yields six different
types via rotations and reflections. The first one is invariant under
rotations by $120^{\circ}$. For larger values of $\omega$, the number
of types of rigid tree honeycombs grows fairly rapidly. For instance,
we get 272 such types for $\omega=5$ and more than 1000 for $\omega=6$.
Rigid tree honeycombs with a rotational symmetry exist for every weight
$\omega$ that is not a multiple of $3$. Some examples of such honeycombs
are provided in \cite{blt} but for large $\omega$ there are other
examples. To find them we denote by $\beta_{1}\ge\dots\ge\beta_{k}$
the possible exit multiplicities of such a honeycomb in the direction
of $w_{1}$, and note that these numbers satisfy
\[
\sum_{j=1}^{k}\beta_{j}=\omega,\quad\sum_{j=1}^{k}\beta_{j}^{2}=\frac{\omega^{2}+2}{3}.
\]
 These equations combine to yield
\[
\sum_{j=1}^{k}\frac{\beta_{j}(\beta_{j}-1)}{2}=\frac{(\omega-1)(\omega-2)}{6}.
\]
For instance, if $\omega=8$, we must write $(\omega-1)(\omega-2)/6=7$
as a sum of triangular numbers. The acceptable possibilities are
\[
6+1,3+3+1,
\]
and they correspond to the following sequences of $\beta_{j}$:
\begin{align*}
4,2,1,1\\
3,3,2.
\end{align*}
(The representation $7=1+1+1+1+1+1+1$ requires that $\beta_{1}=\cdots=\beta_{7}=2$
but $7\times2>8$. Similarly, $7=3+1+1+1+1$ requires $\beta_{1}=3$
and $\beta_{2}=\cdots=\beta_{5}=2$.) We obtain, up to symmetry, the
4 distinct rigid tree honeycomb types depicted in Figure \ref{fig:symmetric-8}.
\begin{figure}
\includegraphics[scale=0.7]{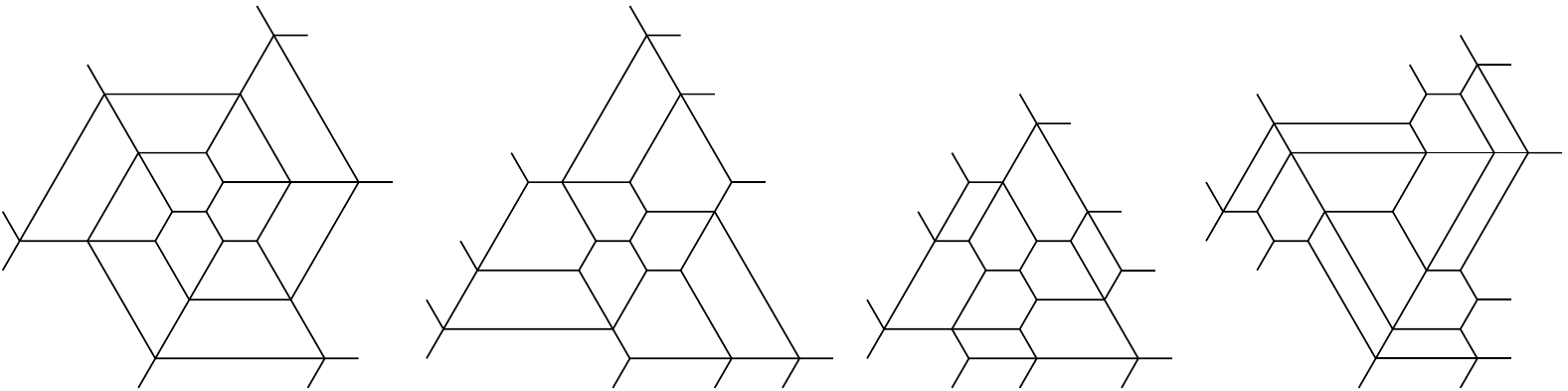}

\caption{\label{fig:symmetric-8}Four honeycombs with $\omega=8$}

\end{figure}

\section{Honeycombs compatible with a puzzle\label{sec:Honeycombs-compatible-with-a}}

Suppose that we have the puzzle of a honeycomb $\nu\in\mathcal{M}$
and, in addition, a honeycomb $\mu\in\mathcal{M}$ superposed on it.
We say that $\mu$ is \emph{compatible} with the puzzle of $\nu$
if the dark gray parallelogram contain no branch points of $\mu$
in their interior and the edges of $\mu$ only cross such a parallelogram
along segments parallel to an edge of the parallelogram. This concept
appeared in \cite{KTW} where it was used to explain saturated Horn
inequalities. It was also employed in \cite[Section 7]{blt} to give
a different argument for some factorizations of Littlewood-Richardson
coefficients first found in \cite{king-tollu,derksen}. Here we are
mostly interested in the existence and construction of such compatible
honeycombs $\mu$, particularly rigid tree honeycombs. It is useful
to observe that a tree honeycomb that is compatible with the puzzle
of $\nu$ crosses the gray parallelograms along segments that are
either all parallel to the white edges or all parallel to the light
gray edges (see 
\begin{figure}
\includegraphics[scale=0.7]{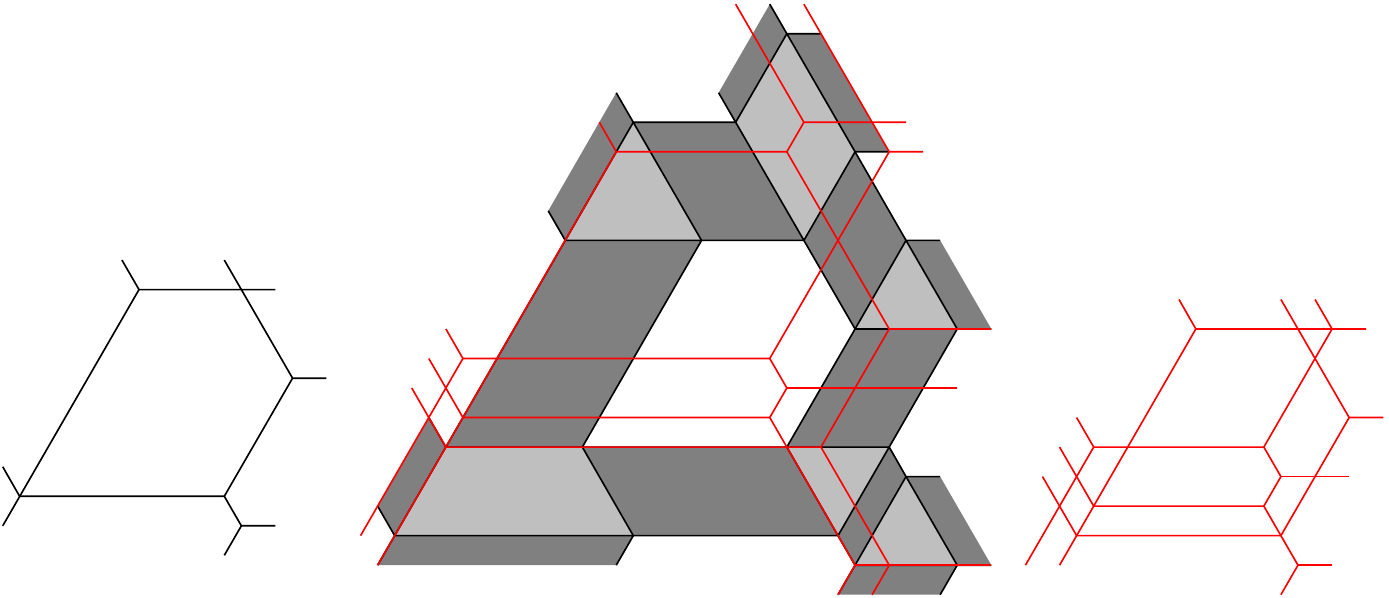}

\caption{\label{fig:compatible honeycomb, deflation}$\nu$, its puzzle, a
compatible $\mu$, and $\mu_{\nu}$}

\end{figure}
Figure \ref{fig:compatible honeycomb, deflation}).  Deflating the
puzzle of $\nu$ yields, in addition to the original honeycomb $\nu$,
a honeycomb $\mu_{\nu}$ constructed as follows: 
\begin{enumerate}
\item the parts of the support of $\mu$ inside the white puzzle pieces
are translated along with those white pieces,
\item the parts of the support of $\mu$ that cross dark gray parallelograms
between white puzzle pieces are discarded, and
\item if $AB$ is an edge of $\nu$ and the support of $\mu$ crosses the
inflation of $AB$ on segments $I$ parallel to $AB$, then $AB$
is also contained in the support of $\mu_{\nu}$ and its multiplicity
is the sum of the multiplicities of all such segments $I$.
\end{enumerate}
A similar construction, using the $*$-deflation of the puzzle of
$\nu$, yields a honeycomb $\mu_{\nu^{*}}$ with $\omega(\mu_{\nu^{*}})=\omega(\mu_{\nu})=\omega(\mu)$.
Moreover, it was shown in \cite{blt} that
\begin{equation}
\mu^{*}=(\mu_{\nu})^{*}+(\mu_{\nu^{*}})^{*}.\label{eq:mu* when mu is compatible with the puzzle of nu}
\end{equation}

Our inductive procedure for constructing honeycombs from rigid overlays
depends on the construction of honeycombs that are compatible with
the puzzle of a \emph{rigid} honeycomb. We begin with compatible honeycombs
whose support does not intersect the interior of any puzzle piece.
Such a support must be contained in the edges of the dark gray parallelograms.
Thus, suppose that $\nu\in\mathcal{M}$ is a rigid honeycomb and regard
the union of the edges of the dark gray parallelograms in its puzzle
as a prospective support for a honeycomb. For simplicity, we simply
call puzzle edges the edges of the dark gray parallelogram. These
edges are either white or light gray, according to the color of the
other neighboring puzzle piece.
\begin{lem}
\label{lem:evil loops in puzzles}Let $\nu$ be a honeycomb in $\mathcal{M}$.
Every evil loop contained in the puzzle edges of $\nu$ is a gentle
loop, that is, consecutive edges in the loop form $120^{\circ}$ angles.
\end{lem}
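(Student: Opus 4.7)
My plan is a local case analysis at each puzzle vertex of $\nu$. Each such vertex $V$ is a corner of the light gray polygon corresponding to some branch point $X$ of $\nu$, and in the non-degenerate case exactly four puzzle edges meet at $V$: the two polygon sides adjacent to $V$ (each a light gray side of a parallelogram) and the two white sides of those same two parallelograms. Writing $\alpha,\beta_1,\gamma,\beta_2$ for the interior angles at $V$ of, respectively, the polygon, the two parallelograms and the white piece, one has $\alpha+\beta_1+\gamma+\beta_2=360^\circ$ with each of the four angles in $\{60^\circ,120^\circ\}$. Hence up to cyclic rotation there are only two angle patterns, $(60,60,120,120)$ and $(60,120,60,120)$, and for each of them I would record which four of the six hexagonal directions carry puzzle edges at $V$. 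Degenerate configurations in which some of $\alpha,\gamma$ equals $180^\circ$ are absorbed into the non-degenerate analysis by merging collinear puzzle edges.

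The key step is then to check every ordered pair $(AV,VC)$ of puzzle edges at $V$ against the five evil turn types. One sees immediately that evil turns of type (2) and type (4) always give $\angle ABC=120^\circ$, so they are harmless for gentleness; type (1) requires the loop to retrace a single puzzle edge and is excluded in a simple loop. This leaves evil turns of type (3) (a collinear $180^\circ$ walk through $V$) and type (5) (a sharp $60^\circ$ counter-clockwise turn with two prescribed extras). Type (3) is locally possible at any puzzle vertex possessing an opposite pair of present directions, while type (5) is locally possible only at vertices of alternating pattern $(60,120,60,120)$ for which the two required extras happen to fall on present directions.

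To complete the proof I would follow the putative evil loop forward from a vertex where a non-gentle turn is claimed to occur. In the type (3) case, the loop lies along a straight line of puzzle edges built from one light gray and one white side of distinct parallelograms; tracing this line, one verifies that the loop must exit it at some vertex where no evil continuation is available. In the type (5) case, the two extras pin down the hexagonal configuration at $V$, which in turn determines the parallelogram the outgoing edge belongs to, and propagating this constraint around the loop eventually violates an extras condition at some later vertex. The main obstacle is precisely this global propagation, because locally each type (3) and type (5) turn is admissible; the contradiction must come from the interplay of consecutive turns with the polygon/parallelogram/white-piece tiling of the puzzle.
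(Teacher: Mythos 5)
There is a genuine gap, and it sits exactly where you locate it yourself: the ``global propagation'' that you defer is the entire content of the lemma, and your proposal does not supply it. Two specific problems. First, you discard evil turns of type (1) on the grounds that they ``retrace a single puzzle edge and are excluded in a simple loop.'' But an evil loop is by definition just a cyclic sequence of branch points each consecutive triple of which is an evil turn; nothing forces it to be simple, and type (1) turns (where $C=A$) are explicitly one of the allowed turn types. For instance the two-term sequence $A,B,A$ consisting of two type (1) turns is a priori a candidate evil loop and must be ruled out, not assumed away. In fact every puzzle vertex is a ``rake'' (a collinear pair of edges plus two teeth at $\pm60^{\circ}$ from one end), and at such a vertex the type (1) configuration $\{0^{\circ},120^{\circ},180^{\circ},240^{\circ}\}$ clockwise from $\vec{BA}$ is realizable; these U-turns along the handle, together with the type (3) straight-throughs, are precisely the non-gentle turns that the paper's proof has to kill. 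Second, for those two surviving turn types your plan is ``tracing this line, one verifies that the loop must exit it at some vertex where no evil continuation is available'' and ``propagating this constraint \dots{} eventually violates an extras condition.'' Neither claim is proved, and the first is not even locally true: a loop can leave the straight line by a gentle ($120^{\circ}$) evil turn and wander off, so no contradiction arises from the line alone. What is needed is a global invariant that survives the gentle turns in between.

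The paper obtains that invariant by orienting puzzle edges: each puzzle edge bounds exactly one dark gray parallelogram and hence has a well-defined acute ($60^{\circ}$) end and obtuse ($120^{\circ}$) end. One checks (this is the content of the paper's Figure of evil turns in a puzzle) that immediately after a $180^{\circ}$ turn the outgoing edge points toward the acute angle of its adjacent parallelogram, and that every admissible evil continuation preserves this property until the next $180^{\circ}$ turn. Traversing the closed loop once then forces the edge at which the first $180^{\circ}$ turn occurs to point toward the acute angle in both of its orientations, which is absurd; hence no $180^{\circ}$ turn occurs and the loop is gentle. A secondary remark: your case $(60^{\circ},120^{\circ},60^{\circ},120^{\circ})$ never occurs. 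At a puzzle vertex the cyclic angle pattern is (light gray polygon, parallelogram, white piece, parallelogram), and a short computation with the $60^{\circ}$-clockwise inflation shows the two parallelograms always contribute one $60^{\circ}$ and one $120^{\circ}$ angle adjacent to the polygon and white angles respectively, so the two $60^{\circ}$ angles are always adjacent. This makes every vertex a rake and eliminates type (5) turns outright, so you need not carry that case into the global argument.
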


\begin{proof}
Since all the branch points in this support are `rakes' (as in the
second part of Figure \ref{fig:e1e2e3}), it follows that every evil
turn of an evil loop is either a gentle turn (of $120^{\circ}$) or
a $180^{\circ}$ turn along the `handle' of a rake. Suppose that an
evil loop $e_{1},\dots,e_{n}=e_{1}$ contains at least one $180^{\circ}$
turn. We may assume that $e_{1}=AB$ and $e_{2}=BA$, in which case
it follows that the edges $e_{2},\dots,e_{k}$ point to the acute
angles of the adjacent parallelograms until we meet again a $180^{\circ}$
turn $e_{k}e_{k+1}$ (see Figure \ref{fig:Evil-turns-in-a-puzzle}).
\begin{figure}
\begin{picture}(320,100)
\put(0,0){\includegraphics[scale=0.8]{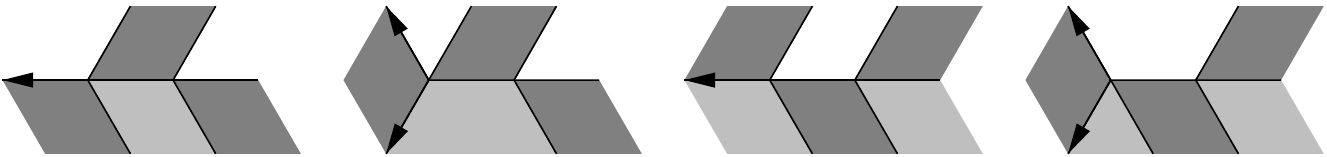}}
\put(12,20){$A$}
\put(95,24){$A$}
\put(181,20){$A$}
\put(257,20){$A$}
\put(45,20){$B$}
\put(124,20){$B$}
\put(191,20){$B$}
\put(269,20){$B$}
\end{picture}\caption{\label{fig:Evil-turns-in-a-puzzle}Evil turns in a rigid puzzle}

\end{figure}
 Continuing this way around the loop, we conclude that $e_{1}$ also
points to the acute angle of the adjacent parallelogram, an obvious
contradiction.
\end{proof}
According to \cite[Theorems 4 and 7]{KTW}, the puzzle of a rigid
honeycomb $\nu$ contains no gentle (and, by Lemma \ref{lem:evil loops in puzzles},
no evil) loops. It follows that any honeycomb supported by the puzzle
edges of  $\nu$ is necessarily rigid. Suppose for a moment that there
exists a honeycomb $\mu$ whose support is the union of all the edges
of the puzzle of $\nu$. The root edges of such a honeycomb are easily
identified. For each ray $e$ in the support of $\nu$ there is one
equivalence class of root edges containing the ray in the inflation
\begin{figure}
\includegraphics[scale=0.5]{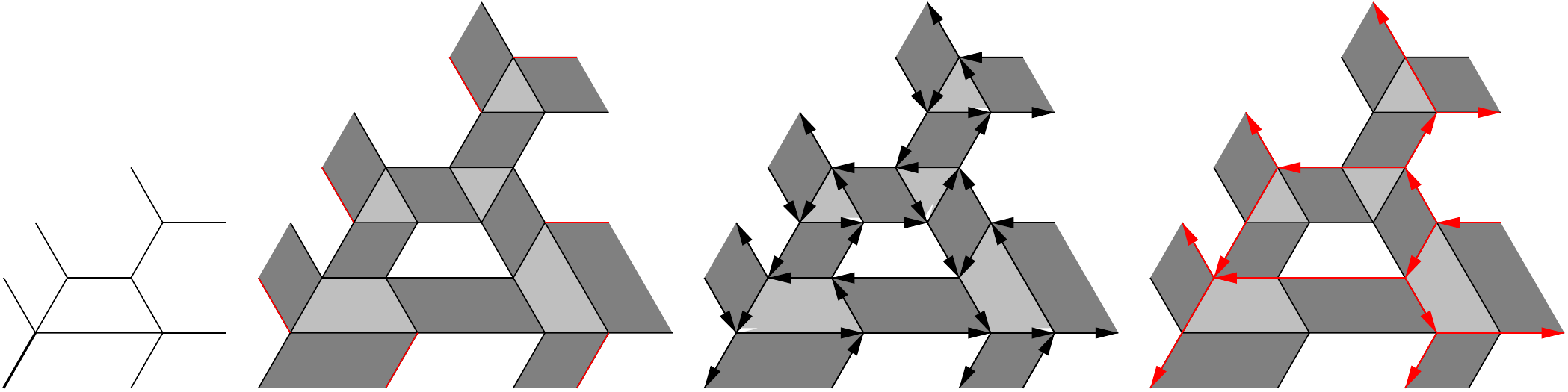}

\caption{\label{fig:puzzle-and-inflation}}

\end{figure}
 of $e$ incident to the acute angle of that inflation(see Figure
\ref{fig:puzzle-and-inflation} for an example with these root edges
colored red). We now orient all edges towards the acute angles of
the parallelogram to which they belong (or away from the obtuse angles
in the case of some of the rays). (Note that the opposite orientation
was used in \cite[Section 4]{KTW} for different purposes.) Thus,
if $f$ is a ray in the support of $\nu$, the boundary of its inflation
has two edges that are rays: an \emph{incoming ray }pointing to the
acute angle of the inflation, and an \emph{outgoing ray. }This orientation
is useful because, except for the root edges, it is precisely the
orientation of the descendant relation, that is, each edge (with the
exception of the root edges) is oriented from an ancestor to a descendant.
The resulting oriented graph contains no loops and, in addition, for
each edge $e$ there is at least one path from one of the root edges
to $e$. It is easy now to see that there actually exists a rigid
tree honeycomb rooted at each of the root edges of the hypothetical
honeycomb $\mu$, and that the union of the supports of these honeycombs
contains all the edges of the puzzle. We summarize these observations
in Theorem \ref{thm:honey in the boundary of puzzles}. The last assertion
in the statement follows by observing that none of the events described
in Remark \ref{rem:calculation of Sigma} is possible for the honeycombs
$\mu_{1}$ and $\mu_{2}$. Figure \ref{fig:puzzle-and-inflation}
shows (in red) the support of one of the honeycombs constructed in
the manner just described. 
\begin{thm}
\label{thm:honey in the boundary of puzzles} Let $\nu\in\mathcal{M}$
be a rigid honeycomb. Then there are exactly ${\rm exit}(\nu)$ rigid
tree honeycombs whose support is contained in the union of the puzzles
edges of $\nu$. The root of the honeycomb corresponding to a ray
$e$ of $\nu$ is the incoming ray in the inflation of $e$. Every
edge of the puzzle of $\nu$ is contained in the support of one of
these tree honeycombs. If $\mu_{1}$ and $\mu_{2}$ are two of these
tree honeycombs, then $\Sigma_{\mu_{1}}(\mu_{2})=0$.
\end{thm}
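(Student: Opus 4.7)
The plan is to use the puzzle edges of $\nu$ as a combinatorial scaffold, build one rigid tree honeycomb per ray of $\nu$ by orienting the scaffold toward acute angles, and then read the vanishing of $\Sigma_{\mu_1}(\mu_2)$ directly from Remark \ref{rem:calculation of Sigma}.

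The first observation is that \emph{every} honeycomb whose support lies in the union of puzzle edges of $\nu$ is automatically rigid. Indeed, at every branch point of the puzzle-edge graph the incident puzzle edges form a ``rake'' (at most three or four of them), so the six-edges configuration forbidden by Proposition \ref{prop:rigid in terms of evil} cannot appear. Moreover, by Lemma \ref{lem:evil loops in puzzles} any evil loop in the puzzle edges of $\nu$ must be gentle, and the rigidity of $\nu$ combined with \cite[Theorems 4 and 7]{KTW} forbids gentle loops. Proposition \ref{prop:rigid in terms of evil} then delivers rigidity for free.

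Next, I would orient each puzzle edge toward the acute angle of its host dark gray parallelogram (a ray is oriented away from the obtuse angle, so each ray $e$ of $\nu$ contributes one \emph{incoming} and one \emph{outgoing} ray in its inflation). The sources of this oriented graph are precisely the incoming rays, hence they number ${\rm exit}(\nu)$. The orientation is acyclic: any directed cycle would deflate to a gentle loop in $\nu$, contradicting rigidity. Except at the sources themselves, the orientation coincides with the descendance relation $\to$ from Section \ref{sec:Honeycombs-and-rigidity}. For each source $e_0$ I would take the subset of puzzle edges reachable from $e_0$ by directed paths, assign multiplicity $1$ to $e_0$, and extend multiplicities along the orientation using the balance condition at each branch point. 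The rake structure at puzzle branch points makes this propagation uniquely defined and positive integer valued. The resulting honeycomb $\mu_{e_0}$ is rigid by the first paragraph, has a single equivalence class of root edges (that of $e_0$) with unit multiplicity, and is therefore a rigid tree honeycomb. Since every puzzle edge can be traced backward along the orientation to some source, the union of the supports of the $\mu_{e_0}$'s covers the full puzzle-edge set, and this accounts for all tree honeycombs supported on puzzle edges (any other would be forced by rigidity to share a source, hence to coincide with some $\mu_{e_0}$).

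For the last statement, fix two such honeycombs $\mu_1, \mu_2$ and apply Remark \ref{rem:calculation of Sigma} with the root edge of $\mu_2$ chosen to be its incoming ray. A local inspection of the four configurations in Figure \ref{fig:Calculation-of-sigma} shows that each is incompatible with the rake geometry at puzzle branch points together with the acute-angle orientation: every such event would require an edge of one honeycomb to approach a shared branch point of the puzzle from an angle that either forces a sixth incident edge or contradicts the ``toward-the-acute-angle'' orientation. Hence no event contributes and $\Sigma_{\mu_1}(\mu_2)=0$.

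The principal obstacle I anticipate is the verification that the multiplicity propagation in the second paragraph is globally consistent at every branch point (i.e.\ that no rake creates a conflict between two incoming directed paths), and the case-by-case check that each of the four $\Sigma$-contributing configurations is genuinely impossible. Both verifications ultimately rely on the rake-like local geometry at puzzle branch points and on the absence of gentle loops guaranteed by the rigidity of $\nu$.
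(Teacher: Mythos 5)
Your proposal follows essentially the same route as the paper: rigidity of any honeycomb supported on puzzle edges via the rake structure and the absence of gentle (hence evil) loops, the acute-angle orientation identifying the incoming rays as the ${\rm exit}(\nu)$ root classes with multiplicities propagated by the balance condition, and the vanishing of $\Sigma_{\mu_1}(\mu_2)$ by checking that none of the four events of Remark \ref{rem:calculation of Sigma} can occur at a puzzle branch point. The argument is correct and matches the paper's own (deliberately brief) justification.
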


The following observation about arbitrary compatible honeycombs is
useful in Section \ref{sec:de-degeneration}.
\begin{lem}
\label{lem:compatible puzzle does not leave the edges of the puzzle once there}Let
$\nu\in\mathcal{M}$ be a rigid honeycomb and let $\mu\in\mathcal{M}$
be a honeycomb that is compatible with the puzzle of $\nu$. Suppose
that the support of $\mu$ contains an edge $e$ of the puzzle. Then
the support of $\mu$ contains every descendant of $e$ relative to
the orientation of the puzzle edges of $\nu$.
\end{lem}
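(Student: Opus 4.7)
The plan is to prove the lemma by induction on the length of the descendance path from $e$; since the descendant relation is the transitive closure of the immediate descendance induced by the puzzle orientation, it suffices to establish the base case. That is, I would show: if $e$ is a puzzle edge with $\mu(e)>0$ and $f$ is any puzzle edge outgoing in the puzzle orientation from the acute-corner endpoint $B$ of $e$, then $\mu(f)>0$.

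Let $B$ be the acute-corner endpoint of $e$. By definition of the puzzle orientation, $B$ is the acute corner of the rhombus $P_{e}$ containing $e$, so both edges of $P_{e}$ at $B$ are oriented into $B$; simultaneously, $B$ is an obtuse corner of the rhombus $P_{f}$ containing $f$, so both edges of $P_{f}$ at $B$ are oriented out of $B$. The rhombus $P_{e}$ occupies a $60^{\circ}$ wedge at $B$ and $P_{f}$ occupies a $120^{\circ}$ wedge; the remaining $180^{\circ}$ of angle at $B$ is partitioned among additional rhombi, light gray pieces, and white pieces.

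The crucial structural observation I would exploit is that compatibility of $\mu$ with the puzzle of $\nu$ restricts the directions in which $\mu$ can leave $B$. Any edge of $\mu$ that enters a dark gray rhombus from the corner $B$ must lie along a segment parallel to a side of that rhombus, and from the corner $B$ the only such segments are the two puzzle edges of the rhombus incident to $B$. Consequently, among the six possible directions $\pm w_{1},\pm w_{2},\pm w_{3}$ at $B$, any direction pointing into the interior of a dark gray rhombus is unavailable to $\mu$ unless it is along a puzzle edge at $B$. The remaining directions point into light gray or white pieces, where $\mu$ is unconstrained by the compatibility condition.

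With this restriction in hand, the proof reduces to a short case analysis at $B$. Rigidity of $\nu$ eliminates many configurations: the puzzle contains no gentle loops by \cite[Theorems 4 and 7]{KTW}, and by Lemma \ref{lem:evil loops in puzzles} no evil loops among puzzle edges either, so the oriented graph of puzzle edges is acyclic and the local arrangement at $B$ is forced into a manageable number of shapes. In each shape I would apply the balance condition \eqref{eq:balance condition} to $\mu$ at $B$: since the multiplicity $\mu(e)>0$ flows into $B$ and the $\mu$-edges at $B$ inside dark gray rhombi are constrained to lie on puzzle edges, the balance equation forces positive multiplicity on each outgoing puzzle edge at $B$, including $f$. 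The main obstacle is carrying out this case analysis cleanly; the challenge is to enumerate all admissible arrangements of rhombi and light-gray/white wedges around $B$ and to verify the balance-driven propagation in each.
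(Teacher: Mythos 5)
Your overall strategy coincides with the paper's: reduce to a single step of the descendance relation and then argue locally at the common endpoint $B$, using the fact that compatibility forbids an edge of $\mu$ from entering a dark gray parallelogram in the one lattice direction that is not parallel to its sides, so that the balance condition (\ref{eq:balance condition}) forces the multiplicity arriving along $e$ to propagate onto the descendant edge. Two small caveats: the appeal to rigidity and to the absence of gentle loops plays no role in this local step (the paper does not use it), and the claim that \emph{every} outgoing puzzle edge at $B$ acquires positive multiplicity is too strong. When $e$ enters $B$ along the light gray side of its parallelogram, the outgoing light gray side of the adjacent parallelogram makes only a $60^{\circ}$ angle with $e$, is not a descendant of $e$, and need not lie in the support of $\mu$; the balance computation only forces the outgoing edges at $120^{\circ}$ (and the opposite edge) from $e$, which are exactly the descendants.

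The genuine gap is that your argument only produces positive $\mu$-multiplicity on an initial portion of $f$ adjacent to $B$. The lemma asserts that the support of $\mu$ contains the \emph{entire} edge $f$, and this is also what your induction needs: to pass to the descendants of $f$ you must know that positive multiplicity reaches the far endpoint of $f$, namely the acute corner of the parallelogram containing $f$. A priori $\mu$ could have a branch point in the interior of $f$ at which the multiplicity along $f$ drops to zero, the mass turning off into the neighboring white or light gray piece. The paper closes this with a second application of the same mechanism: let $I\subset f$ be the largest segment of the support of $\mu$ containing $B$; if $I\ne f$, then at the other endpoint of $I$ the balance condition would force an edge of $\mu$ to cross the dark gray parallelogram bordering $f$ in the direction not parallel to its sides, which compatibility forbids. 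You need to add this step (or an equivalent one) for the induction to close.
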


\begin{proof}
It suffices to show that, given two edges $e_{1}$ and $e_{2}$ in
the puzzle of $\nu$ such that $e_{1}e_{2}$ is a gentle path and
$e_{1}$ is contained in the support of $\mu$, it follows that $e_{2}$
is also contained in the support of $\mu$. The three possible configurations,
up to rotations and reflections, are shown in Figure
\begin{figure}
\begin{picture}(220,100)
\put(0,0){\includegraphics[scale=2.5]{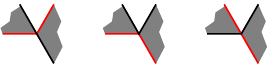}}
\put(-9,20){$e_1$}
\put(65,20){$e_1$}
\put(150,45){$e_1$}

\put(40,45){$e_2$}
\put(100,0){$e_2$}
\put(175,0){$e_2$}

\end{picture}\caption{\label{fig:contained edge}}
Edges of a puzzle in the support of a compatible honeycomb

\end{figure}
 \ref{fig:contained edge}. We observe first that at least part of
$e_{2}$ must be contained in the support of $\mu$ because otherwise
the support of $\mu$ would contain one edge crossing a dark gray
parallelogram in a direction that is not allowed by compatibility.
Similarly, if $I\subset e_{2}$ is the largest segment in the support
of $\mu$ that contains $e_{1}\cap e_{2}$, then $I$ must equal $e_{2}$
because otherwise the endpoint of $I$ (other than $e_{1}\cap e_{2})$
would need to be a branch point for $\mu$ with one edge of $\mu$
crossing the neighboring dark gray parallelogram in the direction
forbidden by compatibility.
\end{proof}
It is of interest to calculate the exit pattern of the tree honeycombs
described in Theorem \ref{thm:honey in the boundary of puzzles},
and we do eventually provide an answer amounting to an inductive procedure
depending on the number of extreme summands of the honeycomb $\nu$
(see Remark \ref{rem:general honeycomb arising from a ray of a rigid honey}).
We start with the special case of an extreme rigid honeycomb. The
direction of the  rays corresponding to the multiplicities $\alpha_{j}$
are not specified in the following statement, but the multiplicity
$\alpha_{1}\alpha_{j}$ corresponds to the outgoing ray in the inflation
of the ray with multiplicity $\alpha_{j}$, while the multiplicities
$\alpha_{1}^{2}-1$ and $1$ correspond to outgoing and incoming rays,
respectively, in the inflation of the ray with multiplicity $\alpha_{1}$.
\begin{prop}
\label{prop:multiple of a rigid tree, original version}Suppose that
$\nu$ is a rigid tree honeycomb and that the nonzero exit multiplicities
of $\nu$ are, in counterclockwise order, $\alpha_{1},\dots,\alpha_{n}$.
Let $\mu$ be the rigid tree honeycomb, supported in the union of
the puzzle edges of $\nu$, and rooted in the incoming ray in the
inflation of the ray with multiplicity $\alpha_{1}$. Then the nonzero
exit multiplicities of $\mu$ are, in counterclockwise order, 
\[
\alpha_{1}^{2}-1,1,\alpha_{1}\alpha_{2},\dots,\alpha_{1}\alpha_{n},
\]
where the first term is omitted if $\alpha_{1}=1$. In particular,
$\omega(\mu)=\alpha_{1}\omega(\nu)$,
\begin{equation}
{\rm exit}(\mu)={\rm exit}(\nu)+1\label{eq:exit(comp)=00003Dexit(nu)+1}
\end{equation}
if $\alpha_{1}>1$, and $\mu$ is homologous to $\nu$ if $\alpha_{1}=1$.
\end{prop}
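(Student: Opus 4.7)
The plan is to compute the exit multiplicities of $\mu$ indirectly by deflating $\mu$ through the puzzle of $\nu$ and reducing the problem to a single scaling constant, which is then pinned down using the rigidity identity (\ref{eq:omega square +2}). I would first invoke Theorem \ref{thm:honey in the boundary of puzzles} to get that $\mu$ exists as the rigid tree honeycomb whose unique root is the incoming ray in the inflation of the ray of $\nu$ with multiplicity $\alpha_1$, with $\mu$-multiplicity $1$ on that root; moreover, the incoming rays in the inflations of the other rays of $\nu$ are roots of distinct tree honeycombs, hence they carry $\mu$-multiplicity $0$. So for each $j$, among the two long sides of the inflation of the $j$-th ray of $\nu$ (the two candidate exit rays of $\mu$ inherited from that strip), the incoming one has a $\mu$-multiplicity that I already know ($1$ for $j=1$ and $0$ for $j\ge 2$), while the outgoing one has an unknown multiplicity $m_j$ to be determined.

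Next I would pass to the deflation $\mu_\nu$. Because the support of $\mu$ lies on the puzzle edges of $\nu$, $\mu_\nu$ is a honeycomb supported on the edges of $\nu$, and on the $j$-th ray of $\nu$ it assigns multiplicity equal to the sum of $\mu$'s multiplicities on the two long sides of the corresponding inflation, namely $1+m_1$ for $j=1$ and $m_j$ for $j\ge 2$. The crucial step is to observe that every honeycomb whose support is contained in the support of a rigid tree honeycomb must be a non-negative scalar multiple of it: this follows because the descendance rule forces every edge multiplicity of such a honeycomb to be determined, up to a common scalar, by the multiplicity on a representative of the single equivalence class of root edges of $\nu$. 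Hence $\mu_\nu = c\nu$ for some $c\ge 0$, and $c>0$ since $\mu_\nu$ assigns multiplicity at least $1$ to the first ray of $\nu$. One then reads off $m_1 = c\alpha_1-1$, $m_j = c\alpha_j$ for $j\ge 2$, and $\omega(\mu) = \omega(\mu_\nu) = c\omega(\nu)$.

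Finally, to fix $c=\alpha_1$, I would substitute the candidate exit multiplicities $1,\,c\alpha_1-1,\,c\alpha_2,\ldots,c\alpha_n$ of $\mu$ into the rigidity identity (\ref{eq:omega square +2}) for $\mu$ and use the same identity for $\nu$, namely $\sum_{j=1}^n\alpha_j^2 = \omega(\nu)^2 + 2$. The bulk of the terms cancels and the equation collapses to $2c(c-\alpha_1)=0$, which together with $c>0$ forces $c=\alpha_1$. Substituting back gives the claimed exit multiplicities $\alpha_1^2-1,\,1,\,\alpha_1\alpha_2,\ldots,\alpha_1\alpha_n$, with the first term simply vanishing when $\alpha_1=1$. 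The remaining assertions are then immediate: $\omega(\mu)=\alpha_1\omega(\nu)$; when $\alpha_1>1$ the outgoing ray of the inflation of the $\alpha_1$-ray contributes a genuinely new exit, so ${\rm exit}(\mu)={\rm exit}(\nu)+1$; and when $\alpha_1=1$, one has $c=1$ and $\mu_\nu=\nu$, so that $\mu$ and $\nu$ share both weight and nonzero exit multiplicities, which makes them homologous in the sense of \cite{blt}. I expect the most care to be required in justifying the scaling claim $\mu_\nu=c\nu$, and in particular in ruling out honeycombs supported on a proper subset of $\nu$'s support; this should follow cleanly from the rigid-tree descendance structure, but needs to be handled carefully at branch points where $\nu$ has more than three incident edges.
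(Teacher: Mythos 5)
Your proposal is correct and follows essentially the same route as the paper's proof: deflate $\mu$ through the puzzle of $\nu$, use that a honeycomb supported in the support of the rigid tree honeycomb $\nu$ must be a scalar multiple $c\nu$, and then determine $c=\alpha_{1}$ from the identity (\ref{eq:omega square +2}) applied to both $\mu$ and $\nu$. The extra care you take in justifying that the incoming rays of the other inflations carry zero multiplicity and that $\mu_{\nu}=c\nu$ only makes explicit what the paper asserts without comment.
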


\begin{proof}
Suppose that the exit multiplicities of $\mu$ are $\beta_{1},1,\beta_{2},\dots,\beta_{n}$,
where $\beta_{j}$ is the multiplicity of the outgoing ray corresponding
to $\alpha_{j}$. The presence of $1$ is explained by the choice
of root edge which happens to be an incoming ray. Deflate now the
puzzle of $\nu$ and obtain thereby a honeycomb $\mu_{\nu}$ with
exit multiplicities $\beta_{1}+1,\beta_{2},\dots,\beta_{n}.$ The
honeycomb $\mu_{\nu}$ has support contained in that of $\nu$ and
is therefore of the form $k\nu$ for some positive integer $k$. We
deduce that the exit multiplicities of $\mu$ are $k\alpha_{1}-1,1,k\alpha_{2},\dots,k\alpha_{n}$
and $\omega(\mu)=k\omega(\nu).$ Thus,
\begin{align*}
\sum_{j=1}^{n}\alpha_{j}^{2} & =\omega(\nu)^{2}+2,\\
(k\alpha_{1}-1)^{2}+1+\sum_{j=2}^{n}(k\alpha_{j})^{2} & =(k\omega(\nu))^{2}+2,
\end{align*}
and subtracting $k^{2}$ times the first equation from the second
we obtain
\[
-2k\alpha_{1}+2=-2k^{2}+2.
\]
The only positive solution of this equation is $k=\alpha_{1}$.
\end{proof}
\begin{cor}
\label{cor:multiple of a rigid tree, mirror image}Suppose that $\nu$
is a rigid tree honeycomb and that the nonzero exit multiplicities
of $\nu$ are, in counterclockwise order, $\alpha_{1},\dots,\alpha_{n}$.
There exists a rigid tree honeycomb $\mu$ whose nonzero exit multiplicities
are, in counterclockwise order, 
\[
1,\alpha_{1}^{2}-1,\alpha_{1}\alpha_{2},\dots,\alpha_{1}\alpha_{n},
\]
where the second term is omitted if $\alpha_{1}=1$. In particular,
$\omega(\mu)=\alpha_{1}\omega(\nu)$.
\end{cor}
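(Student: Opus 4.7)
My plan is to deduce the corollary from Proposition \ref{prop:multiple of a rigid tree, original version} by a double reflection argument. Fix a line $\ell$ parallel to one of the vectors $w_1,w_2,w_3$, and let $\nu'$ denote the image of $\nu$ under reflection across $\ell$. Such a reflection permutes the set $\{w_1,w_2,w_3\}$, so it carries $\mathcal{M}$ into itself; it also sends tree immersions to tree immersions (after relabeling the affected edges) and preserves rigidity, since rigidity---asserting uniqueness of a honeycomb with given exit data---is invariant under every isometry of the plane. Thus $\nu'$ is a rigid tree honeycomb in $\mathcal{M}$, and because reflection reverses orientation its nonzero exit multiplicities in counterclockwise order are $\alpha_n,\alpha_{n-1},\dots,\alpha_1$.

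Next I would apply Proposition \ref{prop:multiple of a rigid tree, original version} to $\nu'$, with the distinguished ray taken to be the image under reflection of the $\alpha_1$-ray of $\nu$. The statement is invariant under cyclic rotation of the counterclockwise enumeration, so we may list the counterclockwise exit multiplicities of $\nu'$ starting from this distinguished ray as $\alpha_1,\alpha_n,\alpha_{n-1},\dots,\alpha_2$. The proposition then produces a rigid tree honeycomb $\mu'\in\mathcal{M}$, supported in the puzzle of $\nu'$, whose counterclockwise nonzero exit multiplicities are
\[
\alpha_1^2-1,\ 1,\ \alpha_1\alpha_n,\ \alpha_1\alpha_{n-1},\ \dots,\ \alpha_1\alpha_2,
\]
with the first term omitted when $\alpha_1=1$, and $\omega(\mu')=\alpha_1\omega(\nu')=\alpha_1\omega(\nu)$.

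Finally, I would reflect $\mu'$ back across $\ell$ to obtain a rigid tree honeycomb $\mu\in\mathcal{M}$. This second reflection again reverses orientation, so the counterclockwise exit pattern of $\mu$ is the reverse of that of $\mu'$, namely $\alpha_1\alpha_2,\alpha_1\alpha_3,\dots,\alpha_1\alpha_n,1,\alpha_1^2-1$, which when read cyclically starting from the $1$ is exactly $1,\alpha_1^2-1,\alpha_1\alpha_2,\dots,\alpha_1\alpha_n$, as claimed; the weight identity $\omega(\mu)=\alpha_1\omega(\nu)$ is inherited from $\mu'$. The main point requiring care is the bookkeeping for the counterclockwise order through two reflections, which is precisely what makes the $\alpha_1\alpha_j$'s come out in the original order of $\nu$ rather than reversed. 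Beyond this bookkeeping I do not foresee a real obstacle, since the corollary is essentially Proposition \ref{prop:multiple of a rigid tree, original version} viewed through a mirror.
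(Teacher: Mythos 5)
Your proposal is correct and is essentially identical to the paper's own proof: the paper likewise reflects $\nu$ in a line parallel to $w_{1}$, applies Proposition \ref{prop:multiple of a rigid tree, original version} to the reflected honeycomb to get exit multiplicities $\alpha_{1}^{2}-1,1,\alpha_{1}\alpha_{n},\dots,\alpha_{1}\alpha_{2}$, and reflects back. Your extra care with the counterclockwise bookkeeping and with why reflection preserves $\mathcal{M}$ and rigidity only makes explicit what the paper leaves implicit.
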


\begin{proof}
Let $\nu'$ be a honeycomb obtained by reflecting $\nu$ in a line
parallel to $w_{1}$. Thus the nonzero exit multiplicities of $\nu'$
can be listed, in counterclockwise order, as $\alpha_{1},\alpha_{n},\alpha_{n-1},\dots,\alpha_{2}$.
Proposition \ref{prop:multiple of a rigid tree, original version},
applied to $\nu'$, shows that there exists a rigid tree honeycomb
$\mu'$ whose nonzero exit multiplicities are, in counterclockwise
order,
\[
\alpha_{1}^{2}-1,1,\alpha_{1}\alpha_{n},\alpha_{1}\alpha_{n-1},\dots,\alpha_{1}\alpha_{2}.
\]
The honeycomb $\mu$ is now obtained by reflecting $\mu'$ in a line
parallel to $w_{1}$.
\end{proof}
\begin{rem}
An alternate proof of the preceding corollary uses the $*$-inflation
of $\mu$.
\end{rem}

We next construct compatible honeycombs starting with a rigid overlay
of two rigid tree honeycombs. The following result was proved in \cite[Theorem 5.2]{blt}.
\begin{thm}
\label{thm:stretched honeycomb}Suppose that $\nu_{1}$ and $\nu_{2}$
are rigid tree honeycombs and $\Sigma_{\nu_{2}}(\nu_{1})=0$. Then
there exists a honeycomb $\widetilde{\nu}_{2}$, homologous to $\nu_{2}$
and compatible with the puzzle of $\nu_{1}$, such that the parts
of the support of $\widetilde{\nu}_{2}$ contained in the white puzzle
pieces are simply translates of the corresponding parts of the support
of $\nu_{2}$.
\end{thm}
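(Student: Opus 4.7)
The plan is to construct $\widetilde{\nu}_2$ by performing the inflation of $\nu_1$ and carrying the support of $\nu_2$ along with the white puzzle pieces. Concretely, translate each white piece of the puzzle of $\nu_1$ away from its original position as in the definition of the puzzle, and inside each translated white piece place the translate of the corresponding portion of the original support of $\nu_2$. This gives the last assertion of the theorem for free. What remains is to fill in the dark gray parallelograms and the light gray polygons so that the result is a single honeycomb, compatible with the puzzle of $\nu_1$, with the same tree and multiplicities as $\nu_2$ (so that it is homologous to $\nu_2$).

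For a fixed edge $e$ of $\nu_1$, the parallelogram inflating $e$ has one pair of sides parallel to $e$ and one pair $60^\circ$ clockwise from $e$. If a subsegment of the support of $\nu_2$ lies on $e$, I would bridge it by a segment parallel to the light gray sides, of length $\nu_1(e)$ and multiplicity equal to the $\nu_2$-multiplicity of that subsegment. If instead an edge of $\nu_2$ originally crossed $e$ transversally, an angular check shows that exactly one of the two transverse directions available to $\nu_2$ is parallel to a side of the parallelogram and hence admits a compatible bridge, while the other direction does not. The forbidden configurations are precisely those events counted in Remark~\ref{rem:calculation of Sigma}, with the analogous analysis at a branch point of $\nu_1$ accounting for crossings that would otherwise fall inside a light gray polygon. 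The hypothesis $\Sigma_{\nu_2}(\nu_1)=0$ thus guarantees that no forbidden configuration appears and that all required bridges exist and are uniquely determined.

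It remains to assemble these local prescriptions into a single globally consistent honeycomb. I would proceed by induction along the tree $T_2$ carrying $\nu_2$, starting from a root edge and propagating outward: each new edge inherits its starting point from the edges already placed, while its direction and multiplicity come from $T_2$, and its length is dictated either by termination inside a white piece or by the first meeting with the support of $\nu_1$ (handled by the previous paragraph). The main obstacle is ruling out global inconsistencies---for instance an edge forced to end inside a dark gray parallelogram, or two placement rules disagreeing where two branches of $T_2$ meet. A clean way to handle this is to invoke Theorem~\ref{thm:rigid overlays}: under the hypothesis $\Sigma_{\nu_2}(\nu_1)=0$, the overlay $\nu_1+\nu_2$ is a rigid honeycomb, and one may deform it by inflating only its $\nu_1$-edges, letting their widths grow continuously from $0$ to their target values. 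Rigidity of $\nu_1+\nu_2$ prevents any degeneracy along the way, and the deformation terminates at the desired $\widetilde{\nu}_2$.
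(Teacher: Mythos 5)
The paper does not prove this theorem itself: it is quoted from \cite{blt} (Theorem 5.2 there), and only the construction is recalled. That recalled construction agrees with your outline on two points --- the interior portions of the support of $\nu_{2}$ travel with the white pieces, and edges of $\nu_{2}$ transversal to the support of $\nu_{1}$ are reconnected across the dark gray parallelograms by segments parallel to the light gray sides, with $\Sigma_{\nu_{2}}(\nu_{1})=0$ excluding, via the events of Remark \ref{rem:calculation of Sigma}, the crossings and branch-point configurations that cannot be bridged. So the skeleton of your construction is the right one.

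There are, however, two genuine gaps. First, you mishandle the portions of the support of $\nu_{2}$ that lie \emph{inside} an edge $e$ of $\nu_{1}$. Such a portion has positive length and cannot be ``bridged by a segment parallel to the light gray sides of length $\nu_{1}(e)$''; it must be carried onto one of the two white translates of $e$, and the choice of translate is not arbitrary: the balance condition at the points where an edge of $\nu_{2}$ enters or leaves the support of $\nu_{1}$ only works out for one of the two sides. The paper's prescription --- fix a root edge of $\nu_{2}$ disjoint from the support of $\nu_{1}$, orient all edges of $\nu_{2}$ away from it, and attach each shared portion to the white piece \emph{on its right} relative to this orientation --- is precisely the nonobvious choice your construction omits. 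Second, the global consistency step, which is the real content of the theorem, is not supplied by your closing argument. ``Inflating only the $\nu_{1}$-edges'' with widths growing continuously from $0$ does not produce a path of honeycombs (the intermediate objects are not honeycombs, so there is nothing for rigidity to act on), and rigidity of $\nu_{1}+\nu_{2}$ is a uniqueness statement about honeycombs with prescribed exit multiplicities; it does not rule out ``degeneracies'' of an unspecified deformation. What actually has to be checked is that at each of the finitely many types of branch points of $\nu_{1}+\nu_{2}$ permitted by $\Sigma_{\nu_{2}}(\nu_{1})=0$ the local reconnection is balanced and that the multiplicities propagated along the tree of $\nu_{2}$ agree wherever edges are re-attached; that case-by-case verification is the substance of the proof in \cite{blt}, and your proposal replaces it with an appeal that does not do the work.
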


We recall briefly the construction of $\widetilde{\nu}_{2}$. Choose
a root edge $e$ of $\nu_{2}$ that is not contained in the support
of $\nu_{1}$ and orient all the edges of $\nu_{2}$ away from $e$
(that is, in the direction of the descendance paths from $e$. If
a portion $f$ of an edge of $\nu_{2}$ is contained in an edge of
$\nu_{1}$, attach $f$ to the white puzzle piece \emph{on its right}
relative to this orientation. This way we obtain the part of the support
of $\widetilde{\nu}_{2}$ that is contained in the boundary of a white
puzzle piece. The part of the support of $\widetilde{\nu}_{2}$ in
the interior of the white pieces is obtained simply by translation,
as in the statement above. Finally, one reconnects the edges of $\widetilde{\nu}_{2}$
that are transversal to the support of $\nu_{1}$ and this is done
by adding a number of segments that cross dark gray parallelograms
and are parallel to their light gray sides. The requirement that $\Sigma_{\nu_{2}}(\nu_{1})=0$
makes this construction possible. This process is illustrated in Figure
\ref{fig:honeycomb and stretch} where the supports of $\nu_{2}$
and $\widetilde{\nu}_{2}$ are drawn with dashed lines. (An interesting
feature of the overlay in Figure \ref{fig:honeycomb and stretch}
is that $\nu_{1}$ and $\nu_{2}$ assign nonzero multiplicity to precisely
the same rays.) 
\begin{figure}
\includegraphics{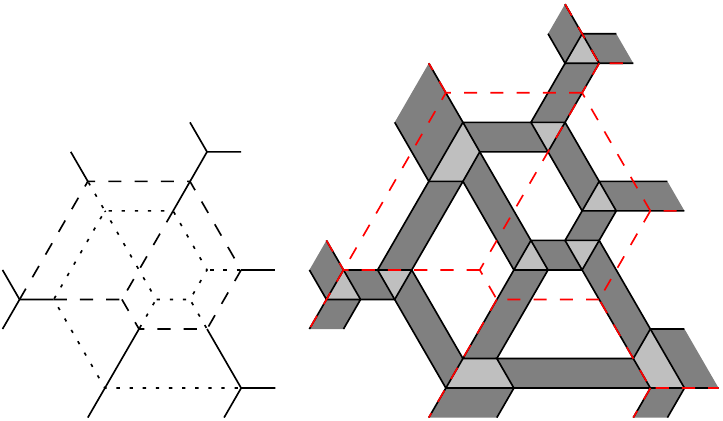}

\caption{\label{fig:honeycomb and stretch}The honeycombs $\nu_{1}$, $\nu_{2}$,
and $\widetilde{\nu}_{2}$}

\end{figure}

\begin{prop}
\label{prop:sigma of stretch vs inflated honey} Suppose that $\nu_{1}$
and $\nu_{2}$ are rigid tree honeycombs such that $\Sigma_{\nu_{2}}(\nu_{1})=0$,
and let $f$ be a ray of $\nu_{1}$ to which $\nu_{1}$ assigns exit
multiplicity $\alpha_{1}$. Let $\alpha_{2}\ge0$ be the exit multiplicity
that $\nu_{2}$ assigns to the same ray. Denote by $\widetilde{\nu}_{1}$
the rigid tree honeycomb whose support is contained in the union of
the puzzle edges of $\nu_{1}$ and with one root edge equal to the
incoming ray of the inflation of $f$. Finally, let $\widetilde{\nu}_{2}$
be the rigid tree honeycomb, homologous to $\nu_{2}$, constructed
in Theorem\emph{ \ref{thm:stretched honeycomb}}. Then $\Sigma_{\widetilde{\nu}_{2}}(\widetilde{\nu}_{1})=0$
and $\Sigma_{\widetilde{\nu}_{1}}(\widetilde{\nu}_{2})=\alpha_{2}+\alpha_{1}\Sigma_{\nu_{1}}(\nu_{2})$.
\end{prop}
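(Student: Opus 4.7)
The plan is to evaluate both bilinear forms by the combinatorial procedure of Remark~\ref{rem:calculation of Sigma}. With suitable choices of root edges, the events that need to be counted will be of a very constrained type and can be matched up explicitly.

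For the first assertion $\Sigma_{\widetilde{\nu}_2}(\widetilde{\nu}_1)=0$, I would take the root of $\widetilde{\nu}_2$ to be the image $\tilde e$, under the stretching construction, of the root edge $e$ of $\nu_2$ chosen in the proof of Theorem~\ref{thm:stretched honeycomb}. Because $e$ is not contained in the support of $\nu_1$, the edge $\tilde e$ lies strictly inside some white puzzle piece of $\nu_1$, hence is disjoint from the support of $\widetilde{\nu}_1$ (which is contained in the puzzle edges). Orient the remaining edges of the tree of $\widetilde{\nu}_2$ away from $\tilde e$; on every strand of $\widetilde{\nu}_2$ that lies along a puzzle edge, this orientation coincides, by the construction recalled above, with the orientation of the puzzle edges of $\nu_1$ toward the acute angles, which is precisely the descendance orientation of $\widetilde{\nu}_1$ away from its root. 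A branch-by-branch inspection of the four configurations of Figure~\ref{fig:Calculation-of-sigma} then shows that each of them forces an edge of $\widetilde{\nu}_2$ either to cross a dark gray parallelogram in a direction different from its light gray sides (violating compatibility) or to leave a puzzle edge after following it (violating Lemma~\ref{lem:compatible puzzle does not leave the edges of the puzzle once there}). Hence no event occurs.

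For $\Sigma_{\widetilde{\nu}_1}(\widetilde{\nu}_2)=\alpha_2+\alpha_1\Sigma_{\nu_1}(\nu_2)$, I would take the root of $\widetilde{\nu}_1$ to be the incoming ray in the inflation of $f$; this ray has multiplicity $1$, so it is a root edge, and it is not in the support of $\widetilde{\nu}_2$ because the latter is homologous to $\nu_2$ and the incoming ray is one of the two new rays produced by the inflation of $f$ rather than a ray of $\nu_2$. The events contributing to $\Sigma_{\widetilde{\nu}_1}(\widetilde{\nu}_2)$ then split into two geometrically disjoint groups. Group~A consists of those events localized in the closed inflation parallelogram of $f$; I would show these to be in bijection with the $\alpha_2$ units of exit multiplicity that $\widetilde{\nu}_2$ assigns to the direction of $f$, each such unit contributing one event in which a dashed edge of $\widetilde{\nu}_2$ meets the acute end of the inflation of $f$. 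Group~B consists of the remaining events, occurring at branch points of $\widetilde{\nu}_1$ in inflations of other edges or rays of $\nu_1$ and in light gray pieces around branch points of $\nu_1$; I would construct a map from Group~B to the events counted by $\Sigma_{\nu_1}(\nu_2)$ whose fibers all have cardinality exactly $\alpha_1$, the $\alpha_1$ preimages of a single event of $\Sigma_{\nu_1}(\nu_2)$ being indexed by the $\alpha_1$ parallel descendance paths in $\widetilde{\nu}_1$ that issue from the root and traverse the inflation of $f$.

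The main obstacle will be Step~3's lift-and-count argument, namely the $\alpha_1$-to-$1$ bijection for Group~B. The geometric content is that, by Proposition~\ref{prop:multiple of a rigid tree, original version}, each edge $g$ of $\nu_1$ has an inflation whose boundary carries segments of $\widetilde{\nu}_1$ with multiplicities scaled by $\alpha_1$ relative to the multiplicity of $g$ in $\nu_1$, while the crossings of this inflation by $\widetilde{\nu}_2$ remain in one-to-one correspondence with the crossings of $g$ by $\nu_2$; this forces the lift of a single event of $\Sigma_{\nu_1}(\nu_2)$ to occur $\alpha_1$ times at the tree level, even though events at the honeycomb level are not weighted by multiplicity. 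Once the bijection for Group~B and the $\alpha_2$ count for Group~A are established, adding them yields the claimed formula.
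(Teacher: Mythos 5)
Your route is genuinely different from the paper's. The paper does not count events at all: since $\Sigma$ is defined by (\ref{eq:the function sigma}) purely in terms of exit multiplicities, it reads off the exit pattern of $\widetilde{\nu}_{1}$ from Proposition \ref{prop:multiple of a rigid tree, original version} ($\alpha_{1}$ times that of $\nu_{1}$, with the ray $f$ split into an outgoing ray of multiplicity $\alpha_{1}^{2}-1$ and an incoming ray of multiplicity $1$), notes that $\widetilde{\nu}_{2}$ has the same exit pattern as $\nu_{2}$ because it is homologous to it, and then matches up the pairs $c'<d'$ with the pairs $c<d$; the only geometric input is the position of the $\alpha_{2}$-ray of $\widetilde{\nu}_{2}$ between the outgoing and incoming rays in the inflation of $f$, which produces exactly the extra term $\alpha_{2}\cdot 1$. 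The authors themselves remark, immediately after their proof, that the event-counting route of Remark \ref{rem:calculation of Sigma} is possible ``though the extra $\alpha_{2}$ crossings \dots may be difficult to locate'' --- which is exactly where your plan is thinnest.

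As written, your proposal has genuine gaps. First, every case analysis it depends on is asserted rather than carried out: the ``branch-by-branch inspection'' for $\Sigma_{\widetilde{\nu}_{2}}(\widetilde{\nu}_{1})=0$, the identification of the $\alpha_{2}$ Group~A events, and the $\alpha_{1}$-to-$1$ fibration of Group~B. Second, the one quantitative claim you do make in support of Group~B is wrong as stated: the two white sides of the inflation of an edge $g$ of $\nu_{1}$ carry $\widetilde{\nu}_{1}$-multiplicities $m'$ and $m''$ with $m'+m''=\alpha_{1}\nu_{1}(g)$ (this is what the deflation identity $(\widetilde{\nu}_{1})_{\nu_{1}}=\alpha_{1}\nu_{1}$ gives), not each individually ``scaled by $\alpha_{1}$.'' Since the events of Remark \ref{rem:calculation of Sigma} are orientation-sensitive and the two white sides of a parallelogram are traversed in opposite directions by the descendance orientation of $\widetilde{\nu}_{1}$ (each points toward the acute corner it meets), a transversal passage of $\widetilde{\nu}_{2}$ through the parallelogram meets two oppositely oriented bundles of tree edges of $\widetilde{\nu}_{1}$, and you must determine which of the two crossings is of a counted type before you can conclude that the total contribution is $\alpha_{1}\nu_{1}(g)$ times anything; the ``$\alpha_{1}$ parallel descendance paths issuing from the root'' picture does not settle this. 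The overlap-type events (portions of $\widetilde{\nu}_{2}$ riding along white puzzle edges, which the stretching construction necessarily creates) are likewise not addressed by either of the two mechanisms you invoke, and the claim that the away-from-root orientation of $\widetilde{\nu}_{2}$ agrees with the toward-acute-angles orientation on puzzle edges is unjustified. I would recommend abandoning the event count here and arguing directly from exit patterns as the paper does.
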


\begin{proof}
Suppose that $\nu_{2}^{(j)}(c)\nu_{1}^{(j)}(d)\ne0$ for some $j=1,2,3$
and some $c<d$. Unless the ray $f$ is on the line $x_{j}=d$, there
exist numbers $c'<d'$ such that $\widetilde{\nu}_{2}^{(j)}(c')=\nu_{2}^{(j)}(c)$
and $\widetilde{\nu}_{1}^{(j)}(d')=\alpha_{1}\nu_{1}^{(j)}(d)$. If
the ray $f$ is on the line $x_{j}=d$, we find $c'<d'<d''$ such
that $\widetilde{\nu}_{2}^{(j)}(c')=\nu_{2}^{(j)}(c)$, $\widetilde{\nu}_{1}^{(j)}(d')=\alpha_{1}^{2}-1$,
and $\widetilde{\nu}_{1}^{(j)}(d'')=1$. This describes all the pairs
$c'<d'$ (or $c'<d''$) such that $\widetilde{\nu}_{2}^{(j)}(c')\widetilde{\nu}_{1}^{(j)}(d')\ne0$
with one exception arising from the ray $f$ that corresponds to $\nu_{1}^{(j)}(c)=\alpha_{1}$
and $\nu_{2}^{(j)}(c)=\alpha_{2}$. In this case, we obtain $d'<d''$
(corresponding to the outgoing and incoming rays in the inflation
of $f$) such that $\widetilde{\nu}_{2}^{(j)}(d')=\alpha_{2}$ and
$\widetilde{\nu}_{1}^{(j)}(d'')=1$. We conclude that
\begin{align*}
\sum_{c'<d'}\widetilde{\nu}_{2}^{(j)}(c')\widetilde{\nu}_{1}^{(j)}(d') & =\alpha_{2}+\alpha_{1}\sum_{c<d}\nu_{2}^{(j)}(c)\nu_{1}^{(j)}(d)\\
 & =\alpha_{2}+\alpha_{1}\Sigma_{\nu_{2}}(\nu_{1})+\alpha_{1}\omega(\nu_{1})\omega(\nu_{2}),
\end{align*}
and thus 
\begin{align*}
\Sigma_{\widetilde{\nu}_{1}}(\widetilde{\nu}_{2}) & =\alpha_{2}+\alpha_{1}\Sigma_{\nu_{1}}(\nu_{2})+\alpha_{1}\omega(\nu_{1})\omega(\nu_{2})-\omega(\widetilde{\nu}_{1})\omega(\widetilde{\nu}_{2})\\
 & =\alpha_{2}+\alpha_{1}\Sigma_{\nu_{1}}(\nu_{2})
\end{align*}
because $\omega(\widetilde{\nu}_{1})=\alpha_{1}\omega(\nu_{1})$ and
$\omega(\widetilde{\nu}_{2})=\omega(\nu_{2})$. The calculation of
$\Sigma_{\widetilde{\nu}_{2}}(\widetilde{\nu}_{1})$ is somewhat simpler
because there is no extra pair $c'>d'$ such that $\widetilde{\nu}_{1}^{(j)}(c')\widetilde{\nu}_{2}^{(j)}(d')\ne0$.
\end{proof}
\begin{rem}
The preceding proposition can also be proved using the calculation
of $\Sigma$ described in Remark \ref{rem:calculation of Sigma} though
the extra $\alpha_{2}$ crossings needed for $\Sigma_{\widetilde{\nu}_{1}}(\widetilde{\nu}_{2})$
may be difficult to locate.
\end{rem}

\begin{rem}
\label{rem:sigma between two stretched honeycombs}Suppose that $\nu_{1},\nu_{2},$
and $\nu_{3}$ are three rigid tree honeycombs such that $\Sigma_{\nu_{2}}(\nu_{1})=\Sigma_{\nu_{3}}(\nu_{1})=0$.
Let $\widetilde{\nu}_{1}$ and $\widetilde{\nu}_{2}$ be as in Proposition
\ref{prop:sigma of stretch vs inflated honey}, and construct a third
honeycomb $\widetilde{\nu}_{3}$ that is homologous to $\nu_{3}$
and compatible with the puzzle of $\nu_{1}$ (the same way that $\widetilde{\nu}_{2}$
was constructed). Then the entire overlay $\widetilde{\nu}_{2}+\widetilde{\nu}_{3}$
is homologous to $\nu_{2}+\nu_{3},$ and therefore
\begin{equation}
\Sigma_{\widetilde{\nu}_{2}}(\widetilde{\nu}_{3})=\Sigma_{\nu_{2}}(\nu_{3})\text{ and }\Sigma_{\widetilde{\nu}_{3}}(\widetilde{\nu}_{2})=\Sigma_{\nu_{3}}(\nu_{2}).\label{eq:sigma of two stretched honeys}
\end{equation}
\end{rem}

There is a second way to construct a rigid tree honeycomb from a rigid
overlay of two tree honeycombs. We show in Theorem \ref{thm:there exists a simple simple degeneration}
that every rigid tree honeycomb with weight at least $2$ can be obtained
from this construction applied to an overlay of honeycombs with smaller
weights.
\begin{thm}
\label{thm:basic inductive construction}Suppose that $\nu_{1}$ and
$\nu_{2}$ are rigid tree honeycombs, $\Sigma_{\nu_{1}}(\nu_{2})=\sigma>0$
and $\Sigma_{\nu_{2}}(\nu_{1})=0$. Then there exists a rigid tree
honeycomb $\widehat{\nu}_{1}$ compatible with the puzzle of $\nu_{2}$,
such that the parts of the support of $\widehat{\nu}_{1}$ contained
in the white puzzle pieces are simply translates of the corresponding
parts of the support of $\nu_{1}$. Moreover, the exit pattern of
$\widehat{\nu}_{1}$ is the same as the exit pattern of $\nu_{1}+\sigma\nu_{2}$.
\end{thm}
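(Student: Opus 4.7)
My plan is to imitate the construction of $\widetilde{\nu}_{2}$ in Theorem \ref{thm:stretched honeycomb} with the roles of $\nu_{1}$ and $\nu_{2}$ exchanged, while accounting for the fact that the naive role-swap would require $\Sigma_{\nu_{1}}(\nu_{2})=0$, whereas here $\Sigma_{\nu_{1}}(\nu_{2})=\sigma>0$. Concretely, I would inflate $\nu_{2}$ to form its puzzle, fix a root edge of $\nu_{1}$ not contained in the support of $\nu_{2}$, and orient all other edges of $\nu_{1}$ away from it. Portions of the support of $\nu_{1}$ lying strictly inside the pre-inflation white regions translate along with their white puzzle pieces; portions lying along an edge of $\nu_{2}$ are attached to the adjacent white piece on their right relative to this orientation. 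What remains is to reconnect, across the dark gray parallelograms, those edges of $\nu_{1}$ that were transversal to edges of $\nu_{2}$.

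The naive reconnection, as in Theorem \ref{thm:stretched honeycomb}, uses only segments parallel to the light gray sides of the parallelograms, and its obstruction is measured by precisely the four kinds of events described in Remark \ref{rem:calculation of Sigma}, whose total count is $\Sigma_{\nu_{1}}(\nu_{2})=\sigma$. I would resolve each such obstruction by inserting instead a segment parallel to the \emph{white} sides of the parallelogram, that is, parallel to the underlying edge of $\nu_{2}$. Summing over all parallelograms, these white-parallel insertions should contribute multiplicity $\sigma$ to each edge of $\nu_{2}$ in the deflation, so that $(\widehat{\nu}_{1})_{\nu_{2}}=\nu_{1}+\sigma\nu_{2}$. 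The latter is rigid by Theorem \ref{thm:rigid overlays} applied with the hypothesis $\Sigma_{\nu_{2}}(\nu_{1})=0$, and the contributions to the exit multiplicities of $\widehat{\nu}_{1}$ come from the translated $\nu_{1}$-material (giving the exit multiplicities of $\nu_{1}$) and from the $\sigma$ threads of white-parallel material (giving $\sigma$ copies of the exit multiplicities of $\nu_{2}$), yielding the exit pattern of $\nu_{1}+\sigma\nu_{2}$.

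Compatibility with the puzzle of $\nu_{2}$ is then immediate from the construction, since $\widehat{\nu}_{1}$ crosses each dark gray parallelogram only along segments parallel to one of its pairs of sides and has no branch point in the parallelograms' interiors. It remains to verify that $\widehat{\nu}_{1}$ is a rigid \emph{tree} honeycomb. Rigidity I would check via Proposition \ref{prop:rigid in terms of evil}: a six-edge branch point would have to lie either inside a white piece (where the tree structure of $\nu_{1}$ rules this out) or at a corner of a dark gray parallelogram (where the compatibility constraint limits valence), and any evil loop in $\widehat{\nu}_{1}$ would descend under deflation to a forbidden configuration in the rigid overlay $\nu_{1}+\sigma\nu_{2}$.

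The hardest step will be to realize $\widehat{\nu}_{1}$ as the immersion of a tree in the sense of Section \ref{sec:Honeycombs-and-rigidity}, so that Theorem \ref{thm:rigid immersion} can be applied. My plan is to graft the tree of $\nu_{1}$ with $\sigma$ copies of the tree of $\nu_{2}$, attached at the $\sigma$ obstruction events identified above, and to exhibit an immersion $f$ of the resulting tree whose image is exactly the support of $\widehat{\nu}_{1}$. Verifying coherence of $f$ and unit multiplicity on a single equivalence class of root edges requires careful tracking of the descendance relation across the white-parallel reconnections inside the dark gray parallelograms, and this bookkeeping is where the bulk of the technical work should lie.
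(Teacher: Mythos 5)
Your overall strategy coincides with the paper's: translate the $\nu_{1}$-material through the puzzle of $\nu_{2}$ and add connecting material inside the dark gray parallelograms, part of it parallel to the white sides. But the crux of your argument --- that the $\sigma$ obstruction events of Remark \ref{rem:calculation of Sigma} can each be ``resolved'' by one white-parallel insertion, and that these insertions sum to exactly $\sigma$ on each edge of $\nu_{2}$ after deflation --- is asserted, not proved, and the naive ``one thread per event'' picture does not survive scrutiny. Once a single white-parallel segment is inserted in one parallelogram, the balance condition forces new branch points on the parallelogram boundary, and by Lemma \ref{lem:compatible puzzle does not leave the edges of the puzzle once there} the support must then contain every descendant of that edge in the oriented puzzle-edge graph; the multiplicities propagate and accumulate over an entire descendant-closed set of puzzle edges, not along $\sigma$ disjoint copies of the support of $\nu_{2}$. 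The paper does not attempt your direct count. It defines the multiplicity of each edge as the number of oriented descendance paths from the translated root edge $e'$, shows the deflation $(\widehat{\nu}_{1})_{\nu_{2}}$ has support inside that of the rigid overlay $\nu_{1}+\nu_{2}$ and hence equals $\nu_{1}+c_{2}\nu_{2}$ for some $c_{2}\ge0$, and then pins down $c_{2}$ indirectly from the quadratic identity $-1=\Sigma_{\nu_{1}+c_{2}\nu_{2}}(\nu_{1}+c_{2}\nu_{2})=-1+c_{2}\sigma-c_{2}^{2}$, which forces $c_{2}\in\{0,\sigma\}$; the root $c_{2}=0$ is then excluded by exhibiting at least one forced white-parallel crossing when $\sigma>0$. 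Without some substitute for this argument your identification of the exit pattern with that of $\nu_{1}+\sigma\nu_{2}$ is unsupported.

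A second gap is well-definedness. Your multiplicities (equivalently, the tree you propose to graft) only make sense if the oriented prospective support contains no oriented gentle loop; otherwise the path counts are infinite and no honeycomb is produced. The paper proves this finiteness by showing that an oriented gentle loop in the inflated picture would deflate to an evil loop in $\nu_{1}+\nu_{2}$, contradicting its rigidity (which is where $\Sigma_{\nu_{2}}(\nu_{1})=0$ enters, via the case analysis of Figures \ref{fig:Orientation-of ZZ} and \ref{fig:11}). You invoke evil loops only to check rigidity of the finished object, not to guarantee that the construction terminates, and you also do not verify the balance condition at the new branch points created on the parallelogram boundaries. These are exactly the places where the hypothesis $\Sigma_{\nu_{2}}(\nu_{1})=0$ is used to restrict the local configurations, so they cannot be waved away as bookkeeping.
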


\begin{proof}
Fix a root edge $e$ of $\nu_{1}$ that is disjoint from the support
of $\nu_{2}$ and orient all the other edges of $\mu_{1}$ away from
$e$, that is, in the direction given by descendance paths. We construct
a collection $\mathcal{C}$ of segments and rays containing:
\begin{enumerate}
\item the edges of the puzzle of $\nu_{2}$,
\item the segments obtained as translates of segments in the support of
$\nu_{1}$ that are contained in the interior of a white puzzle piece,
and
\item the inflation of every branch point of $\nu_{1}+\nu_{2}$ that is
in the interior of a side of a white puzzle piece. That is, if $Z$
is a branch point in the interior of an edge $AB$ of $\nu_{2}$,
and if $A'B',A''B''$ are the two white edges of the inflation of
$AB$, then we include in our collection the segment $Z'Z''$ joining
the points on $A'B'$ and $A''B''$ that are the translates of $Z$.
\end{enumerate}
We orient all the segments in $\mathcal{C}$, with the exception of
the translate of the root edge $e$, as follows:
\begin{enumerate}
\item the edges of every dark gray parallelogram point to the acute angles
of the parallelogram,
\item the translate of a (part of) an edge $f$ of $\nu_{1}$ other than
$e$ is pointing in the same direction as $f$,
\item the segments $Z'Z''$ described above are oriented as in Figure \ref{fig:Orientation-of ZZ}
where the six possible configurations (up to rotations) are shown.
\begin{figure}
\begin{picture}(250,100)
\put(0,0){\includegraphics[scale=.5]{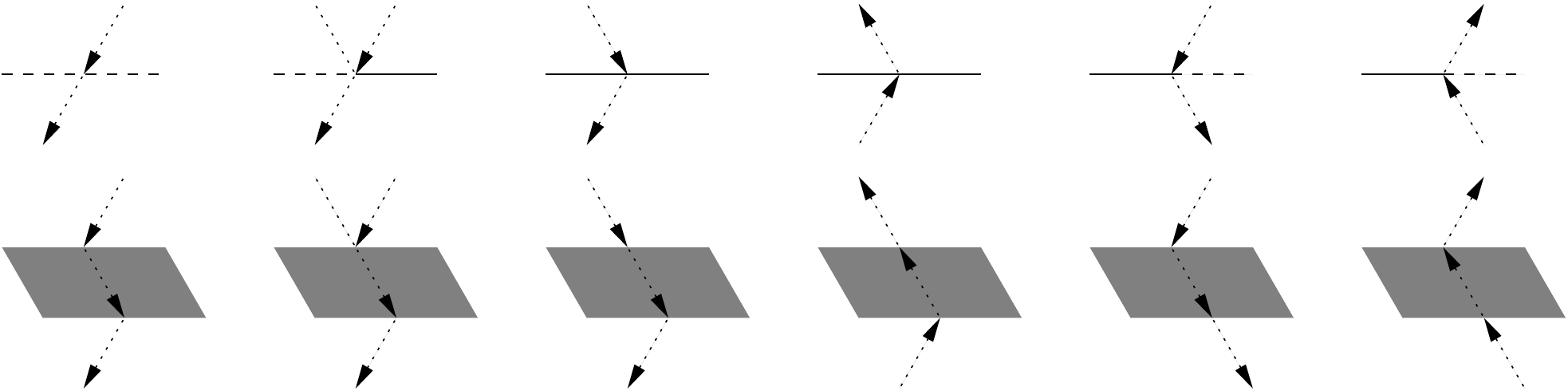}}
{\small
\put(13,49){$Z$}
\put(62,49){$Z$}
\put(111.5,49){$Z$}
\put(162,49){$Z$}
\put(206,49){$Z$}
\put(255,49){$Z$}

\put(8,27){$Z'$}
\put(50,27){$Z'$}
\put(114,27){$Z'$}
\put(163,27){$Z'$}
\put(206,27){$Z'$}
\put(255,27){$Z'$}

\put(20,4){$Z''$}
\put(69,4){$Z''$}
\put(119,4){$Z''$}
\put(169,4){$Z''$}
\put(210,4){$Z''$}
\put(257,4){$Z''$}
}
\end{picture}\caption{\label{fig:Orientation-of ZZ}Orientation of $Z'Z''$}

\end{figure}
(Here, the horizontal edges are in the support of $\nu_{2}$ and are
represented by dashed lines. The support of $\nu_{1}$ is represented
by dotted lines and the solid lines represent common parts of the
two supports. No other configurations are possible because $\Sigma_{\nu_{2}}(\nu_{1})=0$;
see Remark \ref{rem:calculation of Sigma}.)
\end{enumerate}
We treat $\mathcal{C}$ as the prospective support of a honeycomb
and, accordingly, we say that a point where three or more of these
segments meet is a branch point. Denote by $e'=xy$ the translate
of $e$ and construct (as in Section \ref{sec:Puzzles-and-duality})
a tree $T_{0}$ by following the descendance paths in $\mathcal{C}$.
Thus, the vertices of $T_{0}$ are $x,y$, and the gentle paths $e'e_{1}\dots e_{n}$,
where $e_{j}\in\mathcal{C},$ $e_{1}$ points away from $x$ or from
$y$, and $e_{j}$ points away from $e_{j-1}$ for $j=2,\dots,n$.
If $e'e_{1}\dots e_{n}$ is such a path, then it is joined to $e'e_{1}\dots e_{n-1}$
by an edge labeled $j$ if $e_{n}$ is parallel to $w_{j}$. In addition,
$x$ is joined to $y$ by an edge labeled $j$ if $e'$ is parallel
to $w_{j}$. Finally, either $x$ or $y$ is joined to $e_{1}$ by
an edge labeled using the same rule. 

We first argue that the tree $T_{0}$ is finite. In the contrary case,
there would exist an infinite gentle oriented path. Because $\mathcal{C}$
is finite, some edge of $\mathcal{C}$ must appear twice in this path,
and we conclude that there exists an oriented gentle loop $e_{1}\dots e_{n}$
formed by edges in $\mathcal{C}$. Some of the edges $e_{j}$, but
no two consecutive ones, may be of the form $Z'Z''$ as in the case
(3) above. Each other $e_{j}$ is the translate of some edge $f_{j}$
of $\nu_{1}+\nu_{2}$. These edges $f_{j}$ form a loop in the support
of $\nu_{1}+\nu_{2}.$ We claim that this loop is evil, thus contradicting
the rigidity of $\nu_{1}+\nu_{2}$. This is verified by considering
any two, three, or four consecutive edges that form an oriented gentle
path in $\mathcal{C}$. Such a sequence of edges arises from a branch
point of $\nu_{1}+\nu_{2}$ and the relevant evil turns are observed
by examining the situations depicted in Figures \ref{fig:Orientation-of ZZ}
and \ref{fig:11}.
\begin{figure}
\includegraphics[scale=0.6]{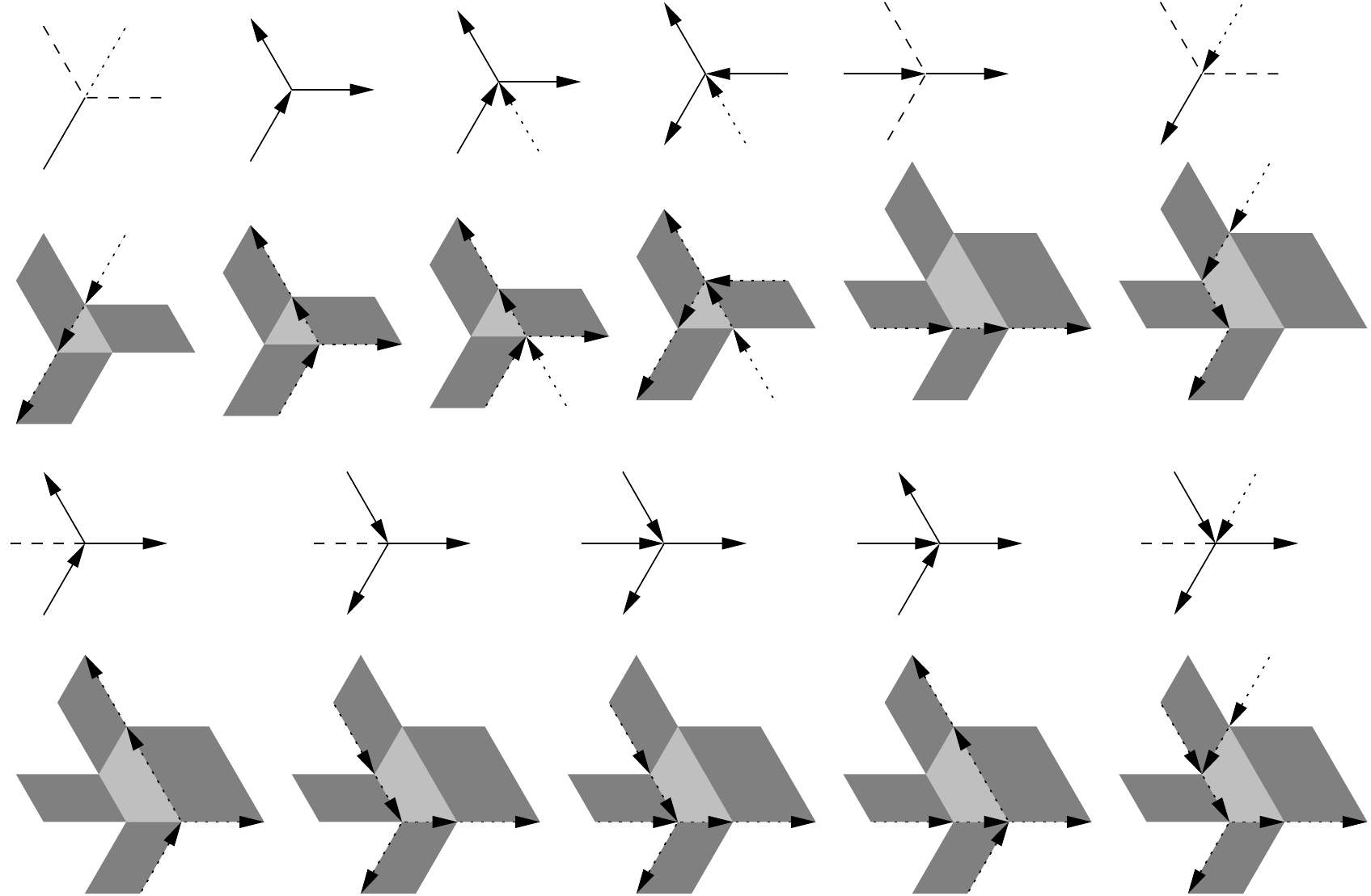}

\caption{\label{fig:11}}

\end{figure}
 All other possibilities for the branch points of $\nu_{1}+\nu_{2}$
are precluded by the condition $\Sigma_{\nu_{2}}(\nu_{1})=0$.

We also note that ends of the tree $T_{0}$ are precisely those paths
$e'e_{1}\dots e_{n}$ such that $e_{n}$ is either an outgoing ray
in the puzzle of $\nu_{2}$ or the translate of a ray of $\nu_{1}$.
We define now multiplicities $\widehat{\nu}_{1}(e)$ for each $e\in\mathcal{C}$
by setting $\widehat{\nu}_{1}(e')=1$ and, for other edges, $\widehat{\nu}_{1}(e)$
is the number of gentle descendance paths $e'e_{1}\dots e_{n}$ satisfying
$e_{n}=e$. 

Define now a honeycomb $\widehat{\nu}_{1}$ by setting $\widehat{\nu}_{1}(e')=1$
and, given an arbitrary edge $f$ in the collection $\mathcal{C}$,
$\widehat{\nu}_{1}(f)$ is the number of distinct oriented descendance
paths from $e'$ to $f$. The balance condition at each branch point
is checked by looking at the various cases from Figures \ref{fig:Orientation-of ZZ}
and \ref{fig:11}. It is clear that $\widehat{\nu}_{1}$ is a tree
honeycomb, and Theorem \ref{thm:rigid immersion} implies that it
is rigid. Since the edges in the support of $\widehat{\nu}_{1}$ are
either translates of the original edges in the support of $\nu_{1}+\nu_{2}$
or segments of the form $Z'Z''$, it follows that the support of the
honeycomb $(\widehat{\nu}_{1})_{\nu_{2}}$ (obtained by deflating
the puzzle of $\nu_{2}$) is contained in the support of $\nu_{1}+\nu_{2}$.
Thus $(\widehat{\nu}_{1})_{\nu_{2}}$ is a rigid honeycomb of the
form $c_{1}\nu_{1}+c_{2}\nu_{2}$ for some $c_{1},c_{2}\ge0$. Clearly,
$(\widehat{\nu}_{1})_{\nu_{2}}$ assigns unit mass to $e'$, so $c_{1}=1$.
To determine $c_{2}$, we note that, since $(\widehat{\nu}_{1})_{\nu_{2}}$
and $\nu_{1}+c_{2}\nu_{2}$ have same exit pattern, relation (\ref{eq:sigms(mu,mu)})
shows that
\begin{align*}
-1 & =\Sigma_{(\widehat{\nu}_{1})_{\nu_{2}}}((\widehat{\nu}_{1})_{\nu_{2}})=\Sigma_{\nu_{1}+c_{2}\nu_{2}}(\nu_{1}+c_{2}\nu_{2})\\
 & =\Sigma_{\nu_{1}}(\nu_{1})+c_{2}\Sigma_{\nu_{1}}(\nu_{2})+c_{2}\Sigma_{\nu_{2}}(\nu_{1})+c_{2}^{2}\Sigma_{\nu_{2}}(\nu_{2})\\
 & =-1+0+c_{2}\sigma-c_{2}^{2}.
\end{align*}
This equality is only satisfied by $c_{2}=0$ and $c_{2}=\sigma$.
If $\sigma>0$, the alternative $c_{2}=0$ can be dismissed because
in this case one of the first two configurations depicted in Figure
\ref{fig:Orientation-of ZZ} or the last configuration from Figure
\ref{fig:11} occurs, thus ensuring that the honeycomb $\widehat{\nu}_{1}$
assigns positive multiplicity to some white edges of the puzzle of
$\nu_{2}$, and so $c_{2}>0$.
\end{proof}
\begin{cor}
\label{cor:pattern of nu1+sigma nu2 comes from rigid tree}Suppose
that $\nu_{1}$ and $\nu_{2}$ are rigid tree honeycombs such that
$\Sigma_{\nu_{2}}(\nu_{1})=0$, and set $\sigma=\Sigma_{\nu_{1}}(\nu_{2})$.
Then there exist a rigid tree honeycomb $\mu_{1}$ \emph{(}respectively,
$\mu_{2}$\emph{) }that has the same exit pattern as $\nu_{1}+\sigma\nu_{2}$
\emph{(}respectively, $\sigma\nu_{1}+\nu_{2}$\emph{)}.
\end{cor}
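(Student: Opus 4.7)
The plan is to split into cases based on $\sigma$. When $\sigma = 0$, we have $\nu_1 + \sigma\nu_2 = \nu_1$ and $\sigma\nu_1 + \nu_2 = \nu_2$, both of which are already rigid tree honeycombs, so $\mu_1 := \nu_1$ and $\mu_2 := \nu_2$ suffice. Assume from now on that $\sigma > 0$.

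The honeycomb $\widehat{\nu}_1$ produced by Theorem~\ref{thm:basic inductive construction} applied directly to the pair $(\nu_1,\nu_2)$ is a rigid tree honeycomb with the exit pattern of $\nu_1 + \sigma\nu_2$, so we take $\mu_1 := \widehat{\nu}_1$.

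For $\mu_2$, the plan is to invoke a mirror analog of Theorem~\ref{thm:basic inductive construction} in which the regular inflation of the puzzle of a rigid tree honeycomb is replaced throughout by the $*$-inflation. All geometric conventions in the proof (the orientations in Figures~\ref{fig:Orientation-of ZZ} and~\ref{fig:11}, the side of each edge on which the parallelograms lie, and so on) reflect, so the forbidden configurations that were ruled out in the original proof by $\Sigma_{\nu_2}(\nu_1) = 0$ become their mirror images, and the role of this hypothesis is taken up by its reverse $\Sigma_{\nu_1}(\nu_2) = 0$. The resulting analog reads: for rigid tree honeycombs $\eta_1,\eta_2$ with $\Sigma_{\eta_1}(\eta_2) = 0$ and $\tau := \Sigma_{\eta_2}(\eta_1) > 0$, there exists a rigid tree honeycomb with exit pattern of $\eta_1 + \tau\eta_2$. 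Applying this analog to $\eta_1 := \nu_2$ and $\eta_2 := \nu_1$: the hypothesis $\Sigma_{\eta_1}(\eta_2) = \Sigma_{\nu_2}(\nu_1) = 0$ is exactly our assumption, while $\tau = \Sigma_{\nu_1}(\nu_2) = \sigma > 0$, so we obtain a rigid tree honeycomb with exit pattern $\nu_2 + \sigma\nu_1 = \sigma\nu_1 + \nu_2$; this is our $\mu_2$.

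The hard part will be to verify this mirror analog carefully: one must reflect each of the orientation conventions in Figures~\ref{fig:Orientation-of ZZ} and~\ref{fig:11}, work with the $*$-puzzle of $\eta_2$ in place of its regular puzzle, and recover the key algebraic identity $-1 = -1 + c_2\tau - c_2^2$ (which isolates $c_2 \in \{0,\tau\}$) in the reflected setting with $\tau = \Sigma_{\eta_2}(\eta_1)$. The argument dismissing the trivial solution $c_2 = 0$ (because some white-edge multiplicity of the constructed honeycomb must be positive) should go through verbatim after reflection, since it only uses the presence of at least one of the forbidden branching configurations at some branch point.
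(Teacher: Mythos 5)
Your proposal is correct and rests on the same two ideas as the paper: $\mu_{1}$ is the honeycomb $\widehat{\nu}_{1}$ of Theorem \ref{thm:basic inductive construction}, and $\mu_{2}$ comes from the mirror-symmetric situation. The one place where you diverge is in how the mirror case is implemented. You propose to prove a reflected analog of Theorem \ref{thm:basic inductive construction} from scratch, redoing the construction with the $*$-inflation and with all the orientation conventions of Figures \ref{fig:Orientation-of ZZ} and \ref{fig:11} reversed, and you correctly flag that verification as the hard part. The paper avoids this work entirely: it reflects $\nu_{1}$ and $\nu_{2}$ in a line parallel to $w_{1}$ (which keeps them in $\mathcal{M}$, swaps the two arguments of $\Sigma$, and reverses the cyclic order of the exit multiplicities), applies Theorem \ref{thm:basic inductive construction} verbatim to the reflected pair, and reflects the output back, exactly as in Corollary \ref{cor:multiple of a rigid tree, mirror image}. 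Your ``mirror analog'' is precisely the conjugate of the existing theorem by this reflection, so none of the deferred verification is actually needed; I would replace that paragraph by the three-line reflection argument. On the other hand, your explicit treatment of the case $\sigma=0$ (where the conclusion is immediate because $\nu_{1}+\sigma\nu_{2}=\nu_{1}$ and $\sigma\nu_{1}+\nu_{2}=\nu_{2}$) is a small but genuine point of care, since Theorem \ref{thm:basic inductive construction} requires $\sigma>0$ and the paper's proof does not comment on this degenerate case.
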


\begin{proof}
The honeycomb $\mu_{1}=\widehat{\nu}_{1}$ satisfies the requirement.
To prove the existence of $\mu_{2}$, we argue as in Corollary \ref{cor:multiple of a rigid tree, mirror image}.
Thus, we consider the reflections $\nu'_{1}$ and $\nu'_{2}$ of $\nu_{1}$
and $\nu_{2}$ in a line parallel to $w_{1}$. These rigid tree honeycombs
satisfy $\Sigma_{\nu_{1}'}(\nu_{2}')=0$ and $\Sigma_{\nu_{2}'}(\nu_{1}')=\sigma$,
so there exists a rigid tree honeycomb $\mu_{2}'$ with the same exit
pattern as $\sigma\nu'_{1}+\nu_{2}'$. We obtain $\mu_{2}$ as the
reflection of $\mu_{2}'$ in a line parallel to $w_{1}$.
\end{proof}
Once we know that a pattern comes from a rigid tree honeycomb $\mu$,
one can find a honeycomb homologous to $\mu$ using the flat region
construction from Section \ref{sec:Puzzles-and-duality}.  This allows
for a fairly efficient construction of measures $\mu_{1}$ and $\mu_{2}$
with the exit patterns prescribed by Corollary \ref{cor:pattern of nu1+sigma nu2 comes from rigid tree}.
\begin{rem}
\label{rem:every tree appears in some dual}With the notation of Theorem
\ref{thm:basic inductive construction}, relation (\ref{eq:mu* when mu is compatible with the puzzle of nu})
applies to the honeycomb $\widehat{\nu}_{1}$ to yield
\begin{align*}
(\widehat{\nu}_{1})^{*} & =((\widehat{\nu}_{1})_{\nu_{2}})^{*}+((\widehat{\nu}_{1})_{\nu_{2}^{*}})^{*}\\
 & =(\nu_{1}+\sigma\nu_{2})^{*}+((\widehat{\nu}_{1})_{\nu_{2}^{*}})^{*}.
\end{align*}
 The support of $(\widehat{\nu}_{1})_{\nu_{2}^{*}}$ inside the triangle
$\Delta_{*}$ is always contained in the support of $\nu_{2}^{*}$.
In many cases, the support of $(\widehat{\nu}_{1})_{\nu_{2}^{*}}$
contains the entire support of $\nu_{2}^{*}$ (inside $\Delta_{*}$).
When this occurs, the honeycomb $((\widehat{\nu}_{1})_{\nu_{2}^{*}})^{*}$
is in fact a tree honeycomb with the same exit pattern as $\nu_{2}$,
so $((\widehat{\nu}_{1})_{\nu_{2}^{*}})^{*}$ is homologous (after
a $60^{\circ}$ rotation) to $\nu_{2}$. Here is one particular way
to form overlays with this special property. Given an arbitrary rigid
tree honeycomb $\nu_{2}$, find another rigid tree honeycomb $\nu_{1}$
such that $\omega(\nu_{1})=3$, $\nu_{2}$ is clockwise from $\nu_{1}$,
and every exit ray of $\nu_{2}$ crosses an edge of $\nu_{1}$. (See
Figure \ref{fig:Clockwise-overlay-with-bigsig} for a particular case.
The honeycomb $\nu_{2}$ is pictured in black and $\nu_{2}$ in red.
The second part of the figure shows the puzzle of $\nu_{2}$ and the
support of $\widehat{\nu}_{1}$.) Since the support of $\widehat{\nu}_{1}$
contains a part of each incoming ray of the puzzle of $\nu_{2}$,
this support must also contain the descendants of these incoming rays,
and these include all the parallelogram edges other than the incoming
rays. It is clear that the support of $(\widehat{\nu}_{1})_{\nu_{2}^{*}}$
contains all the edges of the light gray puzzle pieces. We record
this fact as follows.
\begin{figure}
\includegraphics[scale=0.7]{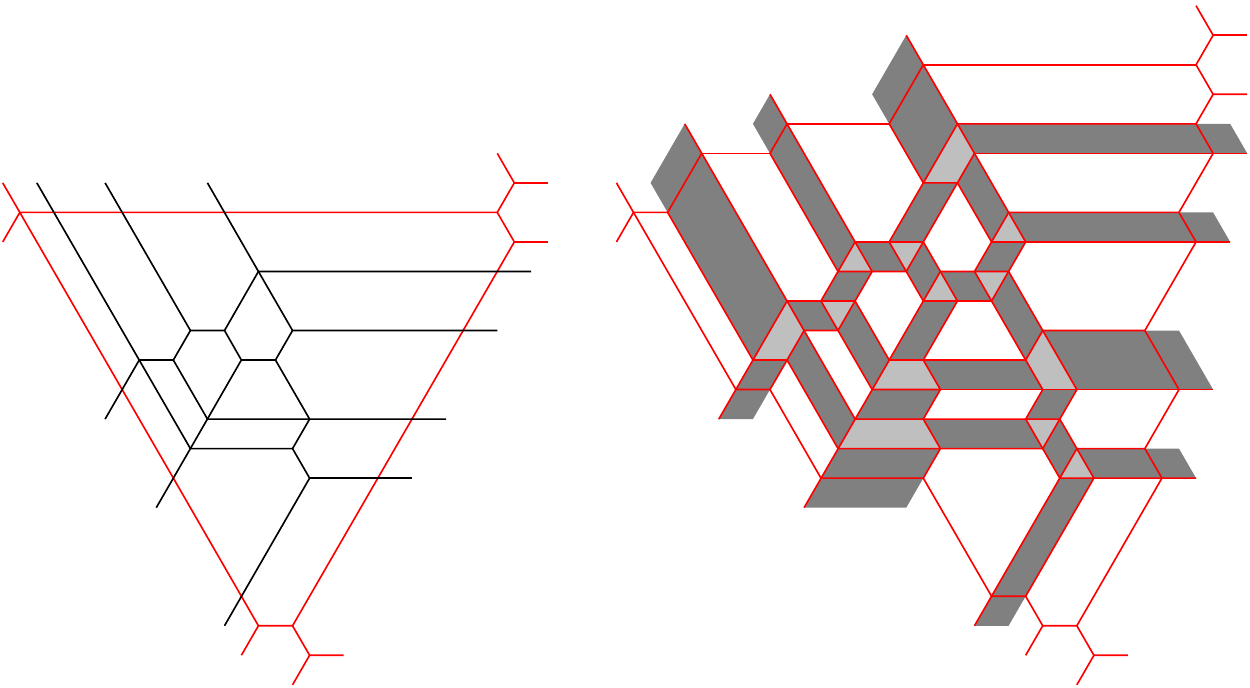}

\caption{\label{fig:Clockwise-overlay-with-bigsig}Clockwise overlay with $\Sigma_{\nu_{1}}(\nu_{2})=\omega(\nu_{1})\omega(\nu_{2})$}
\end{figure}
\end{rem}

\begin{prop}
\label{prop:every mu appears in a dual} Let $\mu\in\mathcal{M}$
be an arbitrary rigid tree honeycomb. There exist rigid tree honeycombs
$\nu\in\mathcal{M}$ and $\rho\in\mathcal{M}_{*}$ such that $\rho\le\nu^{*}$
and $\rho$ is homologous to a $60^{\circ}$ rotation of $\mu$. 
\end{prop}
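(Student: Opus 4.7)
The plan is to realize the construction sketched in Remark \ref{rem:every tree appears in some dual}. Setting $\nu_2 := \mu$, I would first exhibit a rigid tree honeycomb $\nu_1 \in \mathcal{M}$ of weight $3$, distinct from $\mu$, such that $\mu$ is clockwise from $\nu_1$ and every exit ray of $\mu$ crosses an edge of $\nu_1$. Concretely, this amounts to taking one of the six rotations/reflections of the weight-$3$ rigid tree honeycomb shown in Figure \ref{fig:Rigid-tree-homeycombs-small-omega}(C) and positioning its body appropriately behind the branch points of $\mu$, so that its extended arms sweep across the bounded region containing the branch points of $\mu$ and intercept every exit ray. The clockwise arrangement forces $\Sigma_{\nu_2}(\nu_1) = 0$ (and rigidity of the resulting overlay by Theorem \ref{thm:rigid overlays}), while the fact that every exit ray of $\nu_2$ is crossed by an edge of $\nu_1$ implies $\sigma := \Sigma_{\nu_1}(\nu_2) > 0$.

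Next I would apply Theorem \ref{thm:basic inductive construction} to produce the rigid tree honeycomb $\widehat{\nu}_1$ compatible with the puzzle of $\mu$, and set $\nu := \widehat{\nu}_1$ together with $\rho := ((\widehat{\nu}_1)_{\nu_2^*})^*$. The decomposition (\ref{eq:mu* when mu is compatible with the puzzle of nu}) then yields
\[
\nu^* = ((\widehat{\nu}_1)_{\nu_2})^* + \rho,
\]
so $\rho \le \nu^*$ as required.

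The final task is to verify that $\rho$ is homologous to a $60^{\circ}$ rotation of $\mu$. The crossing property, combined with the recipe in the proof of Theorem \ref{thm:basic inductive construction}, forces the support of $\widehat{\nu}_1$ to contain a portion of each incoming ray in the puzzle of $\nu_2$. Lemma \ref{lem:compatible puzzle does not leave the edges of the puzzle once there} then propagates this to every descendant of these incoming rays in the oriented puzzle graph; by the analysis preceding Theorem \ref{thm:honey in the boundary of puzzles}, these descendants exhaust every parallelogram edge of the puzzle of $\nu_2$. Consequently, $(\widehat{\nu}_1)_{\nu_2^*}$ has support inside $\Delta_*$ equal to that of $\nu_2^*$ and therefore determines the same pattern on $\partial \Delta_*$; it is thus homologous to $\nu_2^* = \mu^*$. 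Inverting the $*$ operation (a $60^{\circ}$ rotation followed by deflation) yields that $\rho$ is homologous to a $60^{\circ}$ rotation of $\mu$.

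The hardest step will be the first one: verifying in full generality that a weight-$3$ rigid tree honeycomb can always be positioned so that $\mu$ is clockwise from it and every exit ray of $\mu$ is intercepted. This is a concrete planar geometry argument, and the six available rotations/reflections of the unique weight-$3$ rigid tree honeycomb should provide enough flexibility, but the verification requires care, especially when $\mu$ has many exit rays in an asymmetric configuration.
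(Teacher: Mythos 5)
Your proposal is correct and follows essentially the same route as the paper, whose proof of this proposition is precisely the construction recorded in Remark \ref{rem:every tree appears in some dual}: choose a weight-$3$ rigid tree honeycomb $\nu_{1}$ with $\mu$ clockwise from it and every exit ray of $\mu$ crossing an edge of $\nu_{1}$, form $\widehat{\nu}_{1}$ via Theorem \ref{thm:basic inductive construction}, and use the decomposition (\ref{eq:mu* when mu is compatible with the puzzle of nu}) together with the descendant-propagation argument to identify $((\widehat{\nu}_{1})_{\nu_{2}^{*}})^{*}$ as the desired $\rho$. The existence of the auxiliary weight-$3$ overlay is treated at the same level of detail (i.e., asserted with a representative figure) in the paper as in your sketch, so your proposal matches the paper's argument step for step.
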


\begin{rem}
\label{rem:general honeycomb arising from a ray of a rigid honey}
At this point, we have enough information to calculate the exit pattern
of any of the rigid tree honeycombs whose support is contained in
the union of the boundaries of the dark gray parallelograms in a rigid
puzzle. Thus, suppose that $\nu$ is a rigid honeycomb, and write
it as $\nu=\sum_{j=1}^{n}c_{j}\nu_{j}$, where each $\nu_{j}$ is
a rigid tree honeycomb and $\Sigma_{\nu_{i}}(\nu_{j})=0$ for $i>j$.
Replacing $c_{j}$ by $1$ does not alter the structure of the puzzle
of $\nu$, so we assume that $c_{j}=1$ for $j=1,\dots,n$. Let $f$
be a ray in the support of $\nu$, and let $\alpha_{j}\ge0$ be the
corresponding exit multiplicity of $\nu_{j}$. The main observation
is that the rigid honeycomb $\mu$, supported by the edges of parallelograms
in the puzzle of $\nu$ and rooted in the incoming ray in the inflation
of $f$, can be obtained as a result of a sequence of operations that
we now describe. Suppose for the moment that $\alpha_{1}>0$.
\begin{enumerate}
\item Construct rigid tree honeycombs $\nu_{1}(1),\dots,\nu_{n}(1)$, compatible
with the puzzle of $\nu_{1}$ as follows: $\nu_{1}(1)$ is supported
by the puzzle edges and is rooted in the inflation of $f$ while,
for $j\ge2$, $\nu_{j}(1)$ is obtained by translating the support
of $\nu_{j}$ along with the white pieces as in Theorem \ref{thm:stretched honeycomb}.
As seen above (Proposition \ref{prop:sigma of stretch vs inflated honey}
and Remark \ref{rem:sigma between two stretched honeycombs}), these
new tree honeycombs form again a rigid overlay.
\item Construct rigid tree honeycombs $\nu_{2}(2),\dots,\nu_{n}(2)$, compatible
with the puzzle of $\nu_{2}(1)$, as follows: $\nu_{2}(2)$ is obtained
by translating the support of $\nu_{1}(1)$ as in Theorem \ref{thm:basic inductive construction}
and, for $j\ge3$, $\nu_{j}(2)$ is obtained using the procedure of
Theorem \ref{thm:stretched honeycomb}. These tree honeycombs also
form a rigid overlay.
\item For $k=3,\dots,n$, construct rigid tree honeycombs $\nu_{k}(k),\nu_{k+1}(k),\dots,\nu_{n}(k)$.
The construction is done by induction and the inductive step is the
procedure described in (2). Thus, supposing that these honeycombs
are constructed for some $k<n$, we construct the puzzle of $\nu_{k+1}(k)$
and we construct $\nu_{k+1}(k+1)$ by an application of Theorem \ref{thm:basic inductive construction}
to $\nu_{k}(k)$ and we construct $\nu_{k+1}(j)$ by an application
of Theorem \ref{thm:stretched honeycomb}. to $\nu_{k}(j)$, $j=k+2,\dots,n$.
\item The rigid tree honeycomb $\nu_{n}(n)$ is precisely the desired honeycomb
$\mu$. 
\end{enumerate}
One can calculate the exit patterns of all the honeycombs constructed
above, in particular, the exit pattern of $\mu$.
\begin{enumerate}
\item After the first operation described above, the honeycomb $\nu_{j}(1)$
is homologous to $\nu_{j}$ for $j\ge2$ and the exit pattern of $\nu_{1}(1)$
is obtained by multiplying the exit pattern of $\nu_{1}$ by $\alpha_{1}$,
except for the ray $f$ that gives rise to two exit multiplicities
equal to $\alpha_{1}^{2}-1$ and $1$. Moreover, we have $\Sigma_{\nu_{1}(i)}(\nu_{1}(j))=\Sigma_{\nu_{i}}(\nu_{j})$
for $i,j\ge2,$ and $\Sigma_{\nu_{1}(1)}(\nu_{1}(j))=\alpha_{j}+\alpha_{1}\Sigma_{\nu_{1}}(\nu_{j})$
for $j\ge2$ (see Proposition \ref{prop:sigma of stretch vs inflated honey}).
\item After the second operation, the honeycomb $\nu_{2}(j)$ is homologous
to $\nu_{j}$ for $j\ge3$ and the exit pattern of $\nu_{2}(2)$ is
the same as the one for $\nu_{1}(1)+\sigma\nu_{1}(2)$, where $\sigma=\Sigma_{\nu_{1}(1)}(\nu_{1}(2))$
(see Theorem \ref{thm:basic inductive construction}). Moreover, we
have $\Sigma_{\nu_{2}(i)}(\nu_{2}(j))=\Sigma_{\nu_{i}}(\nu_{j})$
for $i,j\ge3,$ and an easy calculation shows that 
\[
\Sigma_{\nu_{2}(2)}(\nu_{2}(j))=\Sigma_{\nu_{1}(1)}(\nu_{1}(j))+\sigma\Sigma_{\nu_{1}(2)}(\nu_{1}(j)),\quad j\ge3.
\]
Further operations follow this model and they eventually yield the
exit pattern of $\mu$.
\end{enumerate}
In case $\alpha_{1}=0$, we look for the first nonzero $\alpha_{j}$
and we can simply discard the honeycombs $\nu_{1},\dots,\nu_{j-1}$
because they do not contribute to the exit pattern of $\mu$.
\end{rem}

\begin{example}
\label{exa:rigid overlay of two}The special case of a rigid overlay
of two tree honeycombs is used in Section \ref{sec:de-degeneration}.
Thus, let $\nu_{1}$ and $\nu_{2}$ be rigid tree honeycombs such
that $\Sigma_{\nu_{2}}(\nu_{1})=0$ and $\Sigma_{\nu_{1}}(\nu_{2})=\sigma>0$.
Let $f$ be a ray to which $\nu_{j}$ assigns multiplicity $\alpha_{j}$,
$j=1,2$. The first operation in the preceding remark provides $\nu_{1}(1)$
and $\nu_{1}(2)$ such that $\Sigma_{\nu_{1}(2)}(\nu_{1}(1))=0$ and
$\Sigma_{\nu_{1}(1)}(\nu_{1}(2))=\alpha_{2}+\sigma\alpha_{1}$. The
exit multiplicities assigned by $\nu_{1}(1)$ to the outgoing and
incoming rays in the inflation of $f$ are $\alpha_{1}^{2}-1$ and
$1$, respectively, while $\nu_{1}(2)$ assigns multiplicity $\alpha_{2}$
to the outgoing ray. If $e$ is another ray such that $\nu_{j}(e)=\beta_{j}$,
$j=1,2$, then the corresponding (outgoing) ray is assigned multiplicities
$\alpha_{1}\beta_{1}$ and $\beta_{2}$, respectively, by $\nu_{1}(1)$
and $\nu_{1}(2)$. After the second operation, we obtain a honeycomb
$\mu=\nu_{2}(2)$ with typical exit multiplicities $\alpha_{1}\beta_{1}+(\alpha_{2}+\sigma\alpha_{1})\beta_{2}$,
and with exit multiplicities $\alpha_{1}^{2}-1+(\alpha_{2}+\sigma\alpha_{1})\alpha_{2}$
and $1$ assigned to the outgoing and incoming rays in the inflation
of $f$ (in the puzzle of $\nu_{1}+\nu_{2}$). This process is illustrated
in Figure \ref{fig:4-14}, where $\nu_{1}$ is drawn in black, $\nu_{2}$
in red, and the numbers represent exit multiplicities. For this example,
$\sigma=2$, $\alpha_{1}=2$, and $\alpha_{2}=1$. For comparison,
we show in Figure \ref{fig:4-14a} the same final honeycomb constructed
directly on the puzzle of $\nu_{1}+\nu_{2}$.
\begin{figure}
\begin{picture}(320,100)
\put(0,0){\includegraphics[scale=1]{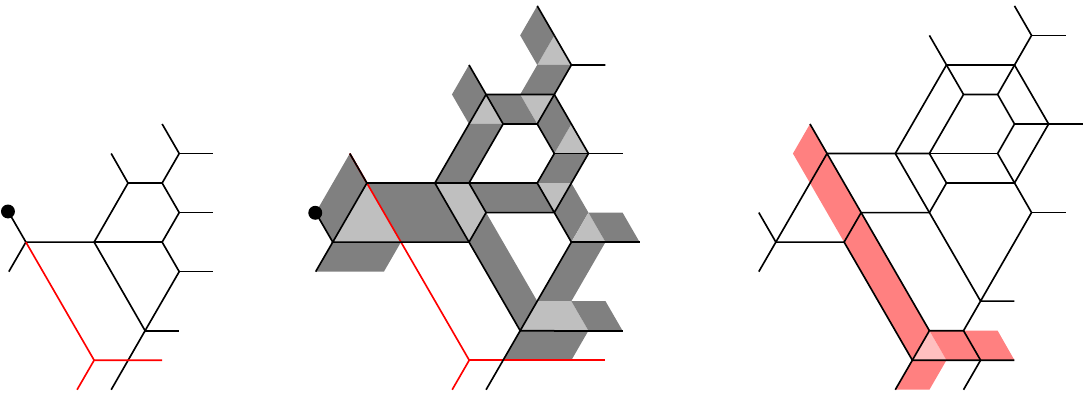}}
{\small
\put(-10,55){$2+\color{red}1$}
\put(-4,25){$2$}
\put(33,-5){$2$}
\put(16,-5){$\color{red}1$}
\put(51,15){$1$}
\put(61,32){$1$}
\put(61,49){$1$}
\put(61,66){$1$}
\put(24,66){$1$}
\put(39,75){$1$}
\put(46,6){$\color{red}1$}

\put(124,-5){$\color{red}1$}
\put(174,6){$\color{red}1$}
\put(90,70){$3+\color{red}1$}
\put(82,53){$1$}
\put(82,31){$4$}
\put(140,-5){$4$}
\put(179,15){$2$}
\put(184,40){$2$}
\put(179,66){$2$}
\put(175,92){$2$}
\put(148,107){$2$}
\put(128,92){$2$}

\put(250,-5){$\color{red}5$}
\put(277,-5){$4$}
\put(293,6){$\color{red}5$}
\put(293,24){$2$}
\put(307,49){$2$}
\put(312,74){$2$}
\put(307,100){$2$}
\put(285,107){$2$}
\put(260,100){$2$}
\put(220,80){$3+\color{red}5$}
\put(210,54){$1$}
\put(210,29){$4$}
}
\end{picture}\caption{The measures $\nu_{1},\nu_{2},\widetilde{\nu}_{1},\widetilde{\nu}_{2}$,
and the final honeycomb for Example \ref{exa:rigid overlay of two}\label{fig:4-14} }

\end{figure}

\begin{figure}
\includegraphics{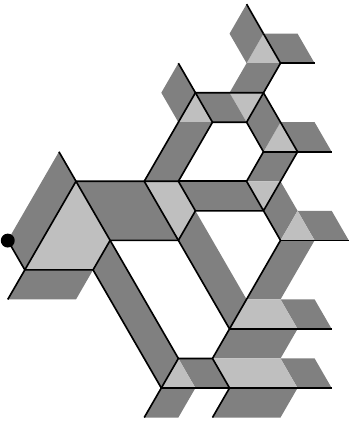}

\caption{The same honeycomb constructed using the puzzle of $\nu_{1}+\nu_{2}$\label{fig:4-14a}}

\end{figure}
\end{example}

\section{Degeneration of a rigid tree honeycomb\label{sec:Degeneration-of-a rigid honey}}

Consider an arbitrary rigid honeycomb $\nu\in\mathcal{M}$ and write
its dual as 
\[
\nu^{*}=c_{1}\rho_{1}+\cdots+c_{n}\rho_{n},
\]
where $n={\rm root}(\nu^{*})={\rm exit}(\nu)-{\rm root(\nu)-2},$
$c_{1},\dots,c_{n}>0$, and each $\rho_{j}$ is a rigid tree honeycomb
in $\mathcal{M}_{*}$ (see Theorem \ref{thm:exit(mu)=00003Droot(mu) etc}).
As we pointed out before, the honeycombs $\mu\in\mathcal{M}$ that
are homologous to $\nu$ are precisely the ones that satisfy
\begin{equation}
\mu^{*}=t_{1}\rho_{1}+\cdots+t_{n}\rho_{n}\label{eq:mu*(local use)}
\end{equation}
for some $t_{1},\dots,t_{n}>0$. Decreasing one of the coefficients
$t_{j}$ amounts to decreasing the lengths of some of the edges of
$\mu$. If we replace some of the coefficients $t_{j}$ by $0$, the
honeycomb $\mu$ satisfying (\ref{eq:mu*(local use)}) is no longer
homologous to $\nu$. We call it a \emph{degeneration }of\emph{ $\nu$.
}A \emph{simple degeneration }of $\mu$ is obtained by replacing exactly
one of the $t_{j}$ by $0$.  All the degenerations of $\nu$ satisfy
$\omega(\mu)=\omega(\nu)$. Moreover, the exit multiplicities of $\mu$
are sums of one or several (consecutive) exit multiplicities of $\nu$
(see Remark \ref{rem:stuff about mu versus mu*}). For instance, if
all the coefficients $t_{j}$ are replaced by $0$, we obtain the
degeneration $\mu=\omega(\nu)\nu_{0}$, where $\nu_{0}$ is a rigid
tree honeycomb of unit weight. Suppose that $\mu$ is an arbitrary
degeneration of $\nu$. An application of Theorem \ref{thm:exit(mu)=00003Droot(mu) etc}
to $\mu$ and to $\nu$ yields
\begin{align*}
{\rm root(\mu)} & ={\rm exit}(\mu)-{\rm root}(\mu^{*})-2,\\
{\rm root}(\nu) & ={\rm exit}(\nu)-{\rm root}(\nu^{*})-2,
\end{align*}
and subtracting these equalities we obtain the equation
\begin{equation}
{\rm root}(\mu)-{\rm root}(\nu)=[{\rm root}(\nu^{*})-{\rm root}(\mu^{*})]-[{\rm exit}(\nu)-{\rm exit}(\mu)],\label{eq:roots and exits of degeneration}
\end{equation}
 where ${\rm root}(\nu^{*})-{\rm root}(\mu^{*})>0$ and ${\rm exit}(\nu)-{\rm exit}(\mu)\ge0$.

Suppose now that $\nu$ is a rigid tree honeycomb and that $\mu$
is a simple degeneration of $\nu$. Then we have ${\rm root}(\nu^{*})-{\rm root}(\mu^{*})=1$
and (\ref{eq:roots and exits of degeneration}) allows for two possibilities:
\begin{enumerate}
\item ${\rm root}(\mu)=2$ and ${\rm exit}(\mu)={\rm exit}(\nu)$, or
\item ${\rm root}(\mu)=1$ and ${\rm exit}(\mu)={\rm exit}(\nu)-1$.
\end{enumerate}
We show that the first situation arises for at least one of the simple
degenerations of $\nu$.
\begin{thm}
\label{thm:there exists a simple simple degeneration}Suppose that
$\nu\in\mathcal{M}$ is a rigid tree honeycomb such that $\omega(\nu)\ge3$.
Then there exist rigid tree honeycombs $\nu_{1},\nu_{2}\in\mathcal{M}$
satisfying $\Sigma_{\nu_{1}}(\nu_{2})=0$ and such that the $\nu$
has the same exit pattern as either $\nu_{1}+\sigma\nu_{2}$ or $\sigma\nu_{1}+\nu_{2}$,
where $\sigma=\Sigma_{\nu_{2}}(\nu_{1})\ne0$.
\end{thm}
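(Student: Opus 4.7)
The plan is to exhibit a simple degeneration $\mu$ of $\nu$ with ${\rm root}(\mu) = 2$, from which the desired decomposition follows by a quadratic calculation. Write $\nu^* = \sum_{j=1}^n c_j \rho_j$ with distinct rigid tree honeycombs $\rho_j \in \mathcal{M}_*$ and $c_j > 0$; by Theorem \ref{thm:exit(mu)=00003Droot(mu) etc}, $n = {\rm exit}(\nu) - 3$, and Cauchy--Schwarz applied to (\ref{eq:diophantus}) gives ${\rm exit}(\nu) \ge 8$ whenever $\omega(\nu) \ge 3$, so $n \ge 5$. For each $j$, let $\mu_j$ be the simple degeneration with $\mu_j^* = \sum_{i \ne j} c_i \rho_i$. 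Combining ${\rm root}(\mu_j^*) = n - 1$ with (\ref{eq:roots and exits of degeneration}) and ${\rm root}(\mu_j) \ge 1$ forces ${\rm exit}(\nu) - {\rm exit}(\mu_j) \in \{0, 1\}$, so ${\rm root}(\mu_j) \in \{2, 1\}$ correspondingly; call $\mu_j$ \emph{good} in the first case.

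The heart of the proof is showing some $\mu_j$ is good. Call an exit ray of $\nu^*$ \emph{private to $\rho_j$} if it is an exit ray of $\rho_j$ but of no other $\rho_i$; then $\mu_j$ is good iff $\rho_j$ has no private ray, and the inequality above shows each $\rho_j$ has at most one private ray. Suppose every $\rho_j$ had exactly one private ray. Then exactly $n$ of the ${\rm exit}(\nu)$ exit rays of $\nu^*$ are private and the remaining $3$ are shared by two or more $\rho_j$, so $|{\rm exit}(\rho_j)| \le 1 + 3 = 4$. A short Cauchy--Schwarz check using (\ref{eq:diophantus}) rules out $|{\rm exit}(\rho_j)| \in \{4, 5, 7\}$ for any rigid tree honeycomb, so $|{\rm exit}(\rho_j)| = 3$; that is, each $\rho_j$ is a unit honeycomb in $\mathcal{M}_*$. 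Letting $m_r$ denote the number of $\rho_j$'s containing the shared exit ray $r$, one has $\sum_r m_r = 2n$ over the three shared rays, so by convexity of $\binom{\cdot}{2}$ the sum $\sum_r \binom{m_r}{2}$ is at least $12$ for $n = 5$ and in general at least $(2n^2 - 3n)/3$. On the other hand, any two distinct unit honeycombs in $\mathcal{M}_*$ share at most one exit ray---sharing two rays would mean equality of two coordinates of the centers, which, combined with the relation $p_1 + p_2 + p_3 = 0$, forces equality of the third and hence of the two honeycombs---so $\sum_r \binom{m_r}{2}$, which counts pairs of $\rho_j$'s sharing a ray, is at most $\binom{n}{2}$. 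The inequality $(2n^2 - 3n)/3 > \binom{n}{2}$ reduces to $n > 3$, giving a contradiction for all $n \ge 5$.

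Now fix a good $\mu := \mu_j$. Then $\mu = d_1 \nu_1 + d_2 \nu_2$ for distinct rigid tree honeycombs $\nu_1, \nu_2$ and positive $d_1, d_2$; after possibly swapping names, Theorem \ref{thm:rigid overlays} gives $\Sigma_{\nu_1}(\nu_2) = 0$, and we set $\sigma := \Sigma_{\nu_2}(\nu_1)$. Since $\mu$ and $\nu$ share the same integer exit pattern and $\nu$ is a rigid tree honeycomb, (\ref{eq:sigms(mu,mu)}) gives $\Sigma_\mu(\mu) = -1$, i.e.\ $-d_1^2 + d_1 d_2 \sigma - d_2^2 = -1$. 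Together with the weight identity $d_1 \omega(\nu_1) + d_2 \omega(\nu_2) = \omega(\nu)$, this forces $(d_1, d_2) \in \{(1, \sigma), (\sigma, 1)\}$ (the two positive solutions of the quadratic), and the linear constraint selects one of them; hence $\nu$ has the exit pattern of $\nu_1 + \sigma \nu_2$ or $\sigma \nu_1 + \nu_2$. If $\sigma = 0$, the quadratic reduces to $d_1^2 + d_2^2 = 1$ with $d_1, d_2 > 0$, forcing noninteger $d_i$ and contradicting the integrality of the exit pattern of $\nu$; thus $\sigma \ne 0$. The main obstacle is the middle step: once framed correctly, the counting is clean, but setting it up requires both the Diophantine exclusion pinning each $\rho_j$ as a unit honeycomb and the coordinate argument underlying the share-at-most-one-ray property.
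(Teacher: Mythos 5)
Your final step contains the fatal gap. You claim that the positive integer solutions of $d_{1}^{2}+d_{2}^{2}-\sigma d_{1}d_{2}=1$ are exactly $(1,\sigma)$ and $(\sigma,1)$. This is false for $\sigma\ge2$: for $\sigma=2$ the equation reads $(d_{1}-d_{2})^{2}=1$ and every pair $(k,k+1)$ solves it, and for $\sigma\ge3$ the solutions are the consecutive pairs of the Pell-type sequence (\ref{eq:the Pell sequence defined}) --- exactly the analysis the paper carries out after this theorem. The weight identity $d_{1}\omega(\nu_{1})+d_{2}\omega(\nu_{2})=\omega(\nu)$ is a single linear relation involving the unknown weights $\omega(\nu_{i})$, so it cannot select $(1,\sigma)$ or $(\sigma,1)$ from this infinite family. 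Moreover, the obstruction is not hypothetical: the paper exhibits rigid tree honeycombs (the patterns $2,3,2|2,2,3|2,3,2$ and $2,3,1|2,2,2|2,2,2$ in Figure \ref{fig:(A)-and-(B)}) having simple degenerations $m_{1}\nu_{1}+m_{2}\nu_{2}$ with $m_{1},m_{2}\ge2$. So knowing only that ${\rm root}(\mu_{j})=2$ does not prove the theorem. The idea you are missing is the paper's: one must choose \emph{which} component $\rho_{i}$ to discard so that a unit-length flat segment of $\nu^{*}$ (dual to a root edge of $\nu$) survives in $\mu^{*}$. Then $\mu$ retains an edge of multiplicity $1$, which forces the coefficient of the extreme summand containing that edge to be $1$, and the quadratic then yields $d_{2}=\Sigma_{\nu_{1}}(\nu_{2})+\Sigma_{\nu_{2}}(\nu_{1})=\sigma$. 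The same unit edge is what rules out the case ${\rm root}(\mu)=1$ (there it forces $\mu$ to be a tree honeycomb, and the identity $\omega(\mu)^{2}=\omega(\nu)^{2}+2\alpha_{1}\alpha_{2}$ gives the contradiction); since only three components of $\nu^{*}$ can touch that segment and $n\ge5$, a valid choice of $i$ exists.

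Your middle counting argument also does not hold up, although it is not where the proof ultimately fails. The quantity ${\rm exit}(\nu)-{\rm exit}(\mu_{j})$ counts crossing points on $\partial\Delta_{*}$ that disappear when $\rho_{j}$ is removed, not exit rays of $\nu^{*}$: by Theorem \ref{thm:exit(mu)=00003Droot(mu) etc} applied to $\nu$ and to $\nu^{*}$ there are ${\rm exit}(\nu^{*})={\rm exit}(\nu)=n+3$ exit rays but only ${\rm exit}(\nu)-3=n$ crossing points, so distinct rays can exit through the same boundary point. Consequently neither ``$\mu_{j}$ is good iff $\rho_{j}$ has no private ray'' nor ``each $\rho_{j}$ has at most one private ray'' follows from the exit-drop bound: a private ray through a crossing point shared with another component costs nothing, and two private rays through one crossing point cost only one unit of ${\rm exit}$. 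Even if this bookkeeping were repaired, the conclusion would only be the existence of a degeneration with two root classes, which, as above, is insufficient.
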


\begin{proof}
The support of $\nu^{*}=c_{1}\rho_{1}+\cdots+c_{n}\rho_{n}$ inside
the triangle $\Delta_{*}$ of size $\omega(\nu)$ has at least one
flat segment $e$ of unit length, dual to a root edge of $\nu$. We
can select $j,k,\ell\in\{1,\dots,n\}$ such that $e$ is also an edge
of $\rho_{j}+\rho_{k}+\rho_{\ell}$ (we may actually need three such
honeycombs, one whose support contains $e$ and two more whose supports
contain just the endpoints of $e$.) Observe that $n\ge5$ because
$\omega(\nu)\ge3$, and thus we can choose $i\in\{1,\dots,n\}\backslash\{j,k,\ell\}.$
Consider the simple degeneration $\mu$ of $\nu$ defined by $\mu^{*}=\nu^{*}-c_{i}\rho_{i}$.
The choice of $i$ ensures that $e$ is an edge of $\mu^{*}$, so
$\mu$ has some edges with multiplicity $1$. 

If ${\rm root}(\mu)=1$, it follows that $\mu$, having an edge of
multiplicity $1$, must itself be a rigid tree honeycomb and we are
in the second situation described above, that is, ${\rm exit}(\mu)={\rm exit}(\nu)-1$.
\marginpar{review this proof}It follows that the nonzero exit multiplicities
of $\nu$ can be arranged in counterclockwise order $\alpha_{1},\dots,\alpha_{n+3}$
(see Theorem  \ref{thm:exit(mu)=00003Droot(mu) etc}) so the nonzero
exit multiplicities of $\mu$ are $\alpha_{1}+\alpha_{2},\alpha_{3},\dots,\alpha_{n+3}$.
We see then that
\begin{align*}
\omega(\mu)^{2} & =-2+(\alpha_{1}+\alpha_{2})^{2}+\sum_{j=3}^{n+3}\alpha_{j}^{2}\\
 & =2\alpha_{1}\alpha_{2}-2+\sum_{j=1}^{n+3}\alpha_{j}^{2}\\
 & =2\alpha_{1}\alpha_{2}+\omega(\nu)^{2},
\end{align*}
and this is impossible because $\omega(\mu)=\omega(\nu)$ and $\alpha_{1}\alpha_{2}\ne0$.
We conclude that we must be in the first situation described above,
that is, ${\rm exit}(\mu)={\rm exit}(\nu)$ and $\mu$ is a sum of
two extreme honeycombs. One of these summands is a tree honeycomb
because an edge of $\mu$ has unit multiplicity, so $\mu=\nu_{1}+\sigma\nu_{2}$
for some rigid tree honeycombs $\nu_{1}$ and $\nu_{2}$, and $\sigma>0$.
Since $\mu$ is rigid, one of the numbers $\Sigma_{\nu_{1}}(\nu_{2})$
and $\Sigma_{\nu_{2}}(\nu_{1})$ is equal to zero. Finally, the equality
${\rm exit}(\mu)={\rm exit(\nu)}$ implies that $\mu$ and $\nu$
have the same exit pattern (see Remark \ref{rem:stuff about mu versus mu*}),
in particular $\Sigma_{\mu}(\mu)=\Sigma_{\nu}(\nu)=-1$. Thus,
\[
-1+\sigma\Sigma_{\nu_{2}}(\nu_{1})+\sigma\Sigma_{\nu_{1}}(\nu_{2})-\sigma^{2}=-1,
\]
so $\sigma$ is equal to either $\Sigma_{\nu_{2}}(\nu_{1})$ or $\Sigma_{\nu_{1}}(\nu_{2})$.
The theorem follows.
\end{proof}
It may happen that all the simple degenerations of $\nu$ yield honeycombs
$\nu_{1}$ and $\nu_{2}$ as in the preceding result. For instance,
this is the case for all rigid tree honeycombs $\nu$ of weight at
most $5$. This however is not true for all rigid tree honeycombs.
The smallest examples correspond to the patterns $2,3,2|2,2,3|2,3,2$
and $2,3,1|2,2,2|2,2,2$. They are pictured in Figure \ref{fig:(A)-and-(B)}
\begin{figure}
\includegraphics{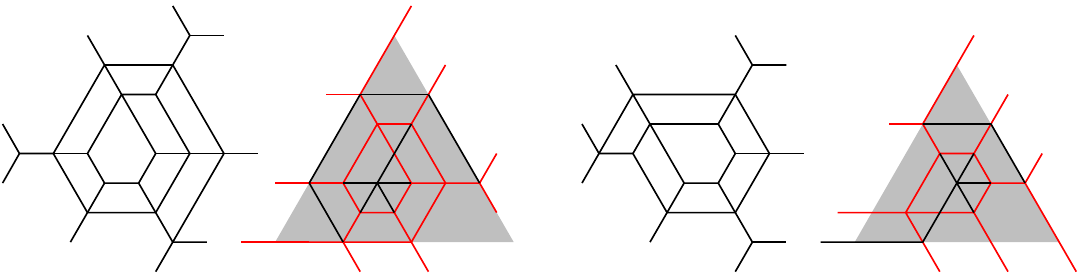}

\caption{(A) and (B) examples\label{fig:(A)-and-(B)}}

\end{figure}
 along with their duals and one particular extreme rigid summand in
the dual (drawn in red) that yields a simple degeneration unlike the
ones envisaged in the proof of Theorem \ref{thm:there exists a simple simple degeneration}.
One of the following situations arises in these examples:
\begin{enumerate}
\item [(A)]A simple degeneration $\mu$ of $\nu$ has ${\rm exit}(\mu)={\rm exit}(\nu)$
and the decomposition $\mu=m_{1}\nu_{1}+m_{2}\nu_{2},$ where $\nu_{1}$
and $\nu_{2}$ are rigid tree honeycombs, has integer coefficients
$m_{1},m_{2}\ge2$.
\item [(B)] A simple degeneration $\mu$ of $\nu$ has ${\rm exit}(\mu)={\rm exit}(\nu)-1$.
In this case, $\mu=c\nu_{0}$ for some rigid tree honeycomb $\nu_{0}$
and some integer $c\ge2$.
\end{enumerate}
In both cases, all the edges of the dual honeycomb $\mu^{*}$ have
length greater than $1$. Later, we describe all the rigid tree honeycombs
$\nu$ that have a degeneration of one of these two types (see Theorems
\ref{thm:overlay with coefficients>1} and \ref{thm:higher multiples of a rigid tree}).
Here we find necessary conditions for such honeycombs. We start with
case (A). In this situation, $\mu$ has the same exit pattern as $\nu$
and so $\Sigma_{\mu}(\mu)=\Sigma_{\nu}(\nu)=-1$. Suppose without
loss of generality that $\Sigma_{\nu_{2}}(\nu_{1})=0$ and calculate
\[
\Sigma_{\mu}(\mu)=-m_{1}^{2}-m_{2}^{2}+\sigma m_{1}m_{2},
\]
where $\sigma=\Sigma_{\nu_{1}}(\nu_{2})$. Thus, the pair $(m_{1},m_{2})$
is a solution of the quadratic Diophantine equation
\begin{equation}
m_{1}^{2}+m_{2}^{2}-\sigma m_{1}m_{2}=1.\label{eq:Diophantine c1 and c2}
\end{equation}
 In case (B), we arrange the nonzero exit multiplicities of $\nu$
in a counterclockwise sequence $\alpha_{1},\dots,\alpha_{m}$ such
that the nonzero exit multiplicities of $\mu$ are $\alpha_{1}+\alpha_{2},\alpha_{3},\dots,\alpha_{m}$.
Note that the exit multiplicities of $\nu_{0}$ are $\beta_{2}=(\alpha_{1}+\alpha_{2})/c,\beta_{3}=\alpha_{3}/c,\dots,\beta_{m}=\alpha_{m}/c$.
We have $\Sigma_{\mu}(\mu)=\Sigma_{c\nu_{0}}(c\nu_{0})=-c^{2}$ and
an application of (\ref{eq:sigms(mu,mu)}) to $\nu$ and to $\mu$
yields
\[
\sum_{j=1}^{m}\alpha_{j}^{2}=\omega(\nu)^{2}+2,\quad(\alpha_{1}+\alpha_{2})^{2}+\sum_{j=3}^{m}\alpha_{j}^{2}=\omega(\mu)^{2}+2c^{2}.
\]
Recalling that $\omega(\mu)=\omega(\nu)$, subtract the two equalities
to obtain
\[
\alpha_{1}\alpha_{2}=c^{2}-1.
\]
Now substitute $\beta_{2}c-\alpha_{1}$ for $\alpha_{2}$ to obtain
\begin{equation}
c^{2}+\alpha_{1}^{2}-\beta_{2}\alpha_{1}c=1,\label{eq:Dioph c alpha1}
\end{equation}
and similarly,
\begin{equation}
c^{2}+\alpha_{2}^{2}-\beta_{2}\alpha_{2}c=1.\label{eq:Dioph c alpha2}
\end{equation}
These equations are of the same kind as ($\ref{eq:Diophantine c1 and c2})$
with $\beta_{2}$ in place of $\sigma$.

We discuss briefly the structure of the nonnegative integer solutions
$(p,q)$ of the equation
\begin{equation}
p^{2}+q^{2}-\sigma pq=1\label{eq:Dioph p^2 +q^2}
\end{equation}
for a given integer $\sigma\ge1$. If $\sigma=1,$ the equation can
be rewritten as 
\[
(p-q)^{2}+pq=1,
\]
and one immediately deduces that the only nonnegative integer solutions
are $(0,1)$, $(1,0)$, and $(1,1)$.

If $\sigma=2$, the equation becomes $(p-q)^{2}=1$, so the nonnegative
integer solutions are $(n,n+1)$ and $(n+1,n)$ for $n=0,1,\dots$. 

If $\sigma\ge3$, a simple substitution transforms (\ref{eq:Dioph p^2 +q^2})
into a standard Pell equation (see \cite[Chapter 8]{levq}). It is
easier however to treat the equation directly. Denote by $\xi$ and
$\xi'$ the two solutions of the quadratic equation $\xi^{2}-\sigma\xi+1=0$,
so $\xi+\xi'=\sigma$ and $\xi\xi'=1$. We set
\[
\mathbb{Z}[\xi]=\{p-\xi q:p,q\in\mathbb{Z}\},
\]
and we define the multiplicative function (sometimes called norm)
$N:\mathbb{Z}[\xi]\to\mathbb{Z}$ by
\[
N(p-\xi q)=(p-\xi q)(p-\xi'q)=p^{2}+q^{2}-\sigma pq,\quad p-\xi q\in\mathbb{Z}[\xi].
\]
 We need to study the multiplicative group $G=\{z\in\mathbb{Z}[\xi]:N(z)=1\}$.
This group is generated, for instance, by $-1$ and $\xi'$. Define
a sequence $\{p_{n}:n=0,1,\dots\}$ of nonnegative integers by setting
\begin{equation}
p_{0}=0,p_{1}=1,p_{n+1}=\sigma p_{n}-p_{n-1},\quad n\in\mathbb{N}.\label{eq:the Pell sequence defined}
\end{equation}
An induction argument shows that
\[
\xi'^{n}=p_{n+1}-p_{n}\xi\text{ and \ensuremath{\xi'^{-n}=-(p_{n-1}-p_{n}\xi),\quad n\in\mathbb{N}.}}
\]
We deduce that the nonnegative integer solutions of (\ref{eq:Dioph p^2 +q^2})
are precisely the pairs $(p_{n},p_{n+1})$ and $(p_{n+1},p_{n})$
for $n=0,1,\dots$.

We note that the numbers $p_{n}=n$ also satisfy the identity $p_{n+1}=2p_{n}-p_{n-1}$.
In other words, the same description of the solutions of (\ref{eq:Dioph p^2 +q^2})
applies to the case $\sigma=2$.

We note for further use some identities that the sequence $\{p_{n}\}_{n=0}^{\infty}$
satisfies. First, consider the column vectors $v_{n}=\left[\begin{array}{c}
p_{n+1}\\
p_{n}
\end{array}\right]$ that satisfy $v_{n+1}=\sigma v_{n}-v_{n-1}$ for $n>0.$ We deduce
that
\[
\det[v_{n+1},v_{n+2}]=\sigma\det[v_{n+1},v_{n+1}]-\det[v_{n+1},v_{n}]=\det[v_{n},v_{n+1}],
\]
and thus $\det[v_{n},v_{n+1}]$ does not depend on $n$. Evaluating
this determinant for $n=0$ we see that
\[
p_{n+1}^{2}-p_{n}p_{n+2}=\det[v_{n},v_{n+1}]=1,\quad n\ge0.
\]
Equivalently,
\begin{equation}
p_{n+1}^{2}-1=p_{n}p_{n+2},\quad n\ge0.\label{eq:p square -1}
\end{equation}
The inductive argument above is easily seen to yield the more general
equality
\[
\det[v_{n},v_{n+k}]=p_{k},\quad k,n\ge0,
\]
or, equivalently,
\[
p_{n+1}p_{n+k}-p_{n}p_{n+k+1}=p_{k},\quad k,n\ge0.
\]
In Section \ref{sec:de-degeneration} we need the special case $k=2$.
Since $p_{2}=\sigma$, this can be rewritten as
\begin{equation}
p_{n}p_{n+1}-\sigma=p_{n-1}p_{n+2},\quad n\ge1.\label{eq:p_np_n+2-sigma}
\end{equation}
These identities show, for instance, that $p_{n}$ and $p_{n+1}$
are relatively prime and that the greatest common divisor of $p_{n}$
and $p_{n+2}$ is $\sigma$ if $n$ is even.

The first few terms of the sequence $p_{n}$ are 
\[
0,1,\sigma,\sigma^{2}-1,\sigma^{3}-2\sigma,\sigma^{4}-3\sigma^{2}+1,\sigma^{5}-4\sigma^{3}+3\sigma,
\]
and a closed formula for these numbers is
\[
p_{n}=\frac{\xi^{n}-\xi'^{n}}{\xi-\xi'},\quad n\ge0.
\]

\begin{rem}
\label{rem:formula for pn applies to all sigma}If $\sigma=2$ and
$p_{n}$ is defined inductively by (\ref{eq:the Pell sequence defined}),
then $p_{n}=n$. Thus, even in this case, the nonnegative solutions
of (\ref{eq:Dioph p^2 +q^2}) are the pairs $(p_{n},p_{n+1})$ and
$(p_{n+1},p_{n})$. If $\sigma=1$, then the sequence $p_{n}$ defined
inductively by (\ref{eq:the Pell sequence defined}) is periodic and
the nonnegative solutions of (\ref{eq:Dioph p^2 +q^2}) are the pairs
$(p_{n},p_{n+1})$ with $n=0,1,2$. 
\end{rem}

Returning now to the discussion of the degenerations of a rigid tree
honeycomb, we have the following result.
\begin{prop}
\label{prop:possible degenerations} Let $\nu$ be a rigid tree honeycomb
and let $\mu$ be a simple degeneration of $\nu$. Then one of the
following cases occurs:
\begin{enumerate}
\item There exist rigid tree honeycombs $\nu_{1}$ and $\nu_{2}$ such that
$\Sigma_{\nu_{1}}(\nu_{2})=1$, $\Sigma_{\nu_{2}}(\nu_{1})=0$, and
$\mu=\nu_{1}+\nu_{2}.$
\item There exist rigid tree honeycombs $\nu_{1}$ and $\nu_{2}$ such that
$\Sigma_{\nu_{2}}(\nu_{1})=0$, $\Sigma_{\nu_{1}}(\nu_{2})=\sigma>1$,
and $\mu=m_{1}\nu_{1}+m_{2}\nu_{2},$ where $m_{1}$ and $m_{2}$
are consecutive terms in the sequence $\{p_{n}\}_{n=1}^{\infty}$
defined by\emph{ (\ref{eq:the Pell sequence defined}).}
\item There exists a rigid tree honeycomb $\nu_{0}$ that has an exit multiplicity
$\sigma>1$ and there exist three consecutive terms $p_{n},p_{n+1},p_{n+2}$
in the sequence $\{p_{n}\}_{n=1}^{\infty}$ defined by\emph{ (\ref{eq:the Pell sequence defined})},
such that the exit multiplicities of $\mu$ are obtained by replacing
each exit multiplicity $\beta$ of $\nu_{0}$ by $p_{n+1}\beta$ except
for the multiplicity $\sigma$ which is replaced by two consecutive
exit multiplicities equal to $p_{n}$ and $p_{n+2}$, respectively.
\end{enumerate}
\end{prop}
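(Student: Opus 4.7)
The plan is to exploit the dichotomy recorded just before the proposition: equation~(\ref{eq:roots and exits of degeneration}), combined with ${\rm root}(\nu^{*})-{\rm root}(\mu^{*})=1$ for a simple degeneration, forces either (A) ${\rm exit}(\mu)={\rm exit}(\nu)$ with ${\rm root}(\mu)=2$, or (B) ${\rm exit}(\mu)={\rm exit}(\nu)-1$ with ${\rm root}(\mu)=1$. Case~(A) will yield conclusions~(1)--(2), while case~(B) will yield conclusion~(3). The engines throughout are the normalization $\Sigma_{\nu}(\nu)=-1$ (equivalent to~(\ref{eq:omega square +2})), bilinearity of $\Sigma$, Theorem~\ref{thm:rigid overlays}, and the classification of nonnegative integer solutions of $p^{2}+q^{2}-\sigma pq=1$ already carried out before the statement.

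In case~(A), the decomposition of rigid honeycombs into extreme summands gives $\mu=m_{1}\nu_{1}+m_{2}\nu_{2}$ with distinct rigid tree honeycombs $\nu_{1},\nu_{2}$ and positive integers $m_{1},m_{2}$. Because ${\rm exit}(\mu)={\rm exit}(\nu)$, Remark~\ref{rem:stuff about mu versus mu*} shows that $\mu$ and $\nu$ share their exit pattern, hence $\Sigma_{\mu}(\mu)=\Sigma_{\nu}(\nu)=-1$. By Theorem~\ref{thm:rigid overlays} I may relabel so that $\Sigma_{\nu_{2}}(\nu_{1})=0$; set $\sigma:=\Sigma_{\nu_{1}}(\nu_{2})\ge0$. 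Expanding $\Sigma_{\mu}(\mu)$ bilinearly and using $\Sigma_{\nu_{i}}(\nu_{i})=-1$ collapses the identity to
\[
m_{1}^{2}+m_{2}^{2}-\sigma m_{1}m_{2}=1.
\]
The case $\sigma=0$ is impossible since $m_{1},m_{2}\ge1$; the case $\sigma=1$ leaves only $(m_{1},m_{2})=(1,1)$, which is conclusion~(1); and the case $\sigma\ge2$ fits exactly into the already-described classification by consecutive terms of the Pell-type sequence $\{p_{n}\}$, yielding conclusion~(2).

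In case~(B), rigidity of $\mu$ together with ${\rm root}(\mu)=1$ forces $\mu=c\nu_{0}$ for some rigid tree honeycomb $\nu_{0}$ and positive integer $c$. List the nonzero exit multiplicities of $\nu$ counterclockwise as $\alpha_{1},\ldots,\alpha_{m}$ so that those of $\mu$ are $\alpha_{1}+\alpha_{2},\alpha_{3},\ldots,\alpha_{m}$, and set $\sigma:=(\alpha_{1}+\alpha_{2})/c$, the distinguished exit multiplicity of $\nu_{0}$. Comparing $\Sigma_{\mu}(\mu)=-c^{2}$ with $\Sigma_{\nu}(\nu)=-1$ via~(\ref{eq:omega square +2}) and subtracting yields $\alpha_{1}\alpha_{2}=c^{2}-1$, which already excludes $c=1$. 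Substituting $\alpha_{2}=\sigma c-\alpha_{1}$ produces~(\ref{eq:Dioph c alpha1})--(\ref{eq:Dioph c alpha2}), so both $(c,\alpha_{1})$ and $(c,\alpha_{2})$ solve $p^{2}+q^{2}-\sigma pq=1$. Since $\sigma=1$ admits no positive solution with $p\ge2$, we obtain $\sigma\ge2$, and the Pell classification places $c=p_{n+1}$ for some $n\ge1$ with $\alpha_{1},\alpha_{2}\in\{p_{n},p_{n+2}\}$. Using the recurrence $p_{n}+p_{n+2}=\sigma p_{n+1}$ together with identity~(\ref{eq:p square -1}) in the form $p_{n+1}^{2}-1=p_{n}p_{n+2}$, the two constraints $\alpha_{1}+\alpha_{2}=\sigma c$ and $\alpha_{1}\alpha_{2}=c^{2}-1$ are simultaneously satisfied only by $\{\alpha_{1},\alpha_{2}\}=\{p_{n},p_{n+2}\}$, which is conclusion~(3).

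The main difficulty is the final step of case~(B): the Diophantine relation on $c$ alone \emph{a priori} permits the spurious possibilities $\alpha_{1}=\alpha_{2}=p_{n}$ or $\alpha_{1}=\alpha_{2}=p_{n+2}$, and one must use the product relation $\alpha_{1}\alpha_{2}=c^{2}-1$ in conjunction with~(\ref{eq:p square -1}) to pin down $\{\alpha_{1},\alpha_{2}\}=\{p_{n},p_{n+2}\}$. Everything else reduces to invoking the dichotomy~(\ref{eq:roots and exits of degeneration}) and rewriting $\Sigma_{\nu}(\nu)=-1$ as an instance of the Pell-type equation whose solutions have already been catalogued.
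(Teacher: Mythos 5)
Your proposal is correct and follows essentially the same route as the paper: the paper's own proof is little more than a pointer to the discussion immediately preceding the statement (the dichotomy from (\ref{eq:roots and exits of degeneration}), the identity $m_{1}^{2}+m_{2}^{2}-\sigma m_{1}m_{2}=1$ in case (A), equations (\ref{eq:Dioph c alpha1})--(\ref{eq:Dioph c alpha2}) in case (B), and the catalogue of solutions of (\ref{eq:Dioph p^2 +q^2})), all of which you reproduce faithfully. The only cosmetic difference is at the very end of case (B), where the paper pins down $\alpha_{1},c,\alpha_{2}$ as three consecutive terms using only the relation $\alpha_{2}=\sigma c-\alpha_{1}$ and the recurrence $p_{n+2}=\sigma p_{n+1}-p_{n}$, whereas you additionally invoke the product relation $\alpha_{1}\alpha_{2}=c^{2}-1$ via (\ref{eq:p square -1}); both arguments are valid.
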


\begin{proof}
Parts (1) and (2) follow immediately from the above discussion of
the equation (\ref{eq:Dioph p^2 +q^2}). For (3), set $\sigma=\beta_{2}$
in equations (\ref{eq:Dioph c alpha1}) and (\ref{eq:Dioph c alpha2})
to see that $c,\alpha_{1}$ and $c,\alpha_{2}$ must be consecutive
pairs of elements in the sequence $p_{n}$. Moreover, $\alpha_{2}=\sigma c-\alpha_{1}$,
showing that $\alpha_{1},c,\alpha_{2}$ are three consecutive terms
of this sequence (in either increasing or decreasing order).
\end{proof}
The proof of Theorem \ref{thm:there exists a simple simple degeneration}
shows that all but at most three of the simple degenerations of a
rigid tree honeycomb fall under either case (1) or case (2) above
with $\min\{m_{1},m_{2}\}=1$. Case (A) above corresponds to (2) with
$\min\{m_{1},m_{2}\}>1$ and (B) corresponds to (3). The following
result shows that at most two degenerations are as in (B).
\begin{thm}
\label{thm:at most two type (B) degenerations} Suppose that $\nu$
is a rigid tree honeycomb. Then:
\begin{enumerate}
\item If $\nu$ has two simple degenerations that are extreme honeycombs,
then $\nu$ has no simple degeneration of the form $m_{1}\nu_{1}+m_{2}\nu_{2}$
with $\nu_{1},\nu_{2}$ rigid tree honeycombs and $m_{1},m_{2}\ge2$.
\item At most two of the simple degenerations of $\nu$ are extreme honeycombs.
\item If two of the simple degenerations of $\nu$ are extreme honeycombs,
then there exists a rigid tree honeycomb $\nu'$, with exit multiplicities
$\delta_{1},\dots,\delta_{k}$, listed in counterclockwise order,
such that:
\begin{enumerate}
\item $\delta_{1}>1$.
\item If the sequence $\{p_{n}\}_{n=0}^{\infty}$ is defined by \emph{(\ref{eq:the Pell sequence defined})}
with $\sigma=\delta_{1}$, then there exists $n>1$ such that $\omega(\mu)=p_{n}p_{n+1}\omega(\nu')$,
and the exit multiplicities of $\mu$, arranged in counterclockwise
order, are either
\[
p_{n}^{2}-1,1,p_{n+1}^{2}-1,p_{n}p_{n+1}\delta_{2},\dots,p_{n}p_{n+1}\delta_{k},
\]
or
\[
p_{n+1}^{2}-1,1,p_{n}^{2}-1,p_{n}p_{n+1}\delta_{2},\dots,p_{n}p_{n+1}\delta_{k},.
\]
where the first three exit multiplicities correspond to parallel rays.
\end{enumerate}
\end{enumerate}
\end{thm}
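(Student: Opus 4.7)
The plan is to extract the combinatorial structure imposed on $\nu$'s exit pattern by the hypothesis of two type-(B) simple degenerations, then to derive all three parts from this structure. By Proposition~\ref{prop:possible degenerations}(3), each type-(B) degeneration $\mu_k=c_k\nu_0^{(k)}$ ($k=1,2$) merges a consecutive pair $P_k=(\alpha_{i_k},\alpha_{i_k+1})$ of exit multiplicities of $\nu$ with $\alpha_{i_k}\alpha_{i_k+1}=c_k^2-1$, so that $(\alpha_{i_k},c_k,\alpha_{i_k+1})$ forms a consecutive Pell triple with some base $\sigma_k\geq 2$; moreover, since $\mu_k/c_k$ is a rigid tree honeycomb, $c_k$ must divide every exit multiplicity of $\nu$ outside $P_k$.

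The first step is to show that $P_1$ and $P_2$ must overlap. If they were disjoint, then $c_1$ would divide both elements of $P_2$, so $c_1\mid\gcd(p_n,p_{n+2})$ in the Pell sequence with base $\sigma_2$; since this gcd divides $\sigma_2$ and symmetrically $c_2\mid\sigma_1$, together with the bounds $c_k\geq\sigma_k$ (valid for any type-(B) degeneration) and the separate treatment of the case where $1$ appears in a Pell pair (where $c_2\mid 1$ is an immediate contradiction), this forces $c_1=c_2=1$, contrary to $c_k\geq 2$. Having overlapped $P_1$ and $P_2$ at a single multiplicity, I would write the three relevant consecutive exits as $a,\alpha,b$ and show that $\alpha=1$. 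The constraints $c_1\mid b$ and $c_2\mid a$ yield $c_2\mid c_1^2-1$ and $c_1\mid c_2^2-1$; combined with the Pell characterization $\alpha c_k\mid\alpha^2+c_k^2-1$ and the uniqueness of each integer $\geq 2$ as a term of any given Pell sequence, a case analysis forces $\alpha=1$, whence $a=c_1^2-1$ and $b=c_2^2-1$.

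The heart of the argument is the next step, where I would show that $c_1,c_2$ are consecutive terms $p_n,p_{n+1}$ of a single Pell sequence with some base $\sigma\geq 2$, and simultaneously construct $\nu'$. The strategy is iterated degeneration: $\nu_0^{(1)}$ has consecutive exit multiplicities $c_1$ and $b/c_1$ whose product equals $c_2^2-1$, making $(c_1,b/c_1)$ a Pell pair of $\nu_0^{(1)}$ and yielding a further type-(B) degeneration of $\nu_0^{(1)}$ that produces $c_2\nu'$ for some rigid tree honeycomb $\nu'$. Identity~(\ref{eq:p square -1}) then gives $b/c_1=p_{n+2}$ in the Pell sequence whose base $\sigma=\delta_1$ is the exit multiplicity of $\nu'$ at the twice-merged position, and the identity $p_n^2+p_{n+1}^2-1=\sigma p_np_{n+1}$ yields $\omega(\nu)=p_np_{n+1}\omega(\nu')$. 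The remaining exits of $\nu$ are of the form $p_np_{n+1}\delta_j$ because $\mathrm{lcm}(c_1,c_2)=p_np_{n+1}$ divides each $\alpha_j$ outside the overlap region, completing part~(3).

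Parts~(1) and~(2) are direct consequences. For~(1): since $\nu$ has an exit multiplicity equal to $1$, it cannot also admit a type-(A) decomposition $\nu=m_1\nu_1+m_2\nu_2$ with $m_1,m_2\geq 2$, as every exit multiplicity of the latter is of the form $m_1a_j+m_2b_j\geq\min(m_1,m_2)\geq 2$. For~(2): a third type-(B) degeneration would merge some consecutive pair among the exits of $\nu$; the explicit form from part~(3) restricts candidate pairs to those already accounted for in the overlap region, and any other candidate is ruled out by the divisibility requirement that the new degeneration constant divide all exits outside the new pair, which is incompatible with the common factor $p_np_{n+1}$ of the remaining exits. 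The main obstacle will be the iterated-degeneration construction of $\nu'$, since one must verify that the Pell pair identified in the exit list of $\nu_0^{(1)}$ actually corresponds to a simple degeneration of $\nu_0^{(1)}$ arising from its dual decomposition; this will likely require combining the Pell identities~(\ref{eq:p square -1}) and~(\ref{eq:p_np_n+2-sigma}) with the rigidity criterion of Theorem~\ref{thm:rigid immersion} and a careful tracking of the degeneracy graph.
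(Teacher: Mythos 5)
Your overall strategy runs parallel to the paper's: extract the Pell-triple structure of each type-(B) degeneration from Proposition \ref{prop:possible degenerations}(3), use the fact that $c_k$ divides every exit multiplicity of $\nu$ outside the merged pair, rule out disjoint pairs, and then analyze the overlapping configuration $a,\alpha,b$. Your disjointness argument (via $c_1\mid\gcd$ of a Pell pair, which divides $\sigma_2\le c_2$, and symmetrically) is a workable alternative to the paper's, which instead derives $k_0^2+1=k_1^2+k_2^2$ with $k_0$ a common multiple of $k_1,k_2$ and gets $(k_1^2-1)(k_2^2-1)\le 0$. However, there is a genuine gap at the heart of the proof, precisely where you flag ``the main obstacle'': neither the claim that the shared multiplicity $\alpha$ equals $1$ nor the claim that $c_1,c_2$ are \emph{consecutive} terms of a single Pell sequence is actually established. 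The phrase ``a case analysis forces $\alpha=1$'' from $c_1\mid c_2^2-1$, $c_2\mid c_1^2-1$ and the two Pell constraints is not carried out (and it is not evident that these arithmetic conditions alone suffice), and your iterated-degeneration route to consecutiveness requires knowing that the candidate pair of consecutive exit multiplicities of $\nu_0^{(1)}$ is in fact merged by an actual simple degeneration of $\nu_0^{(1)}$ arising from its dual decomposition --- which you explicitly leave unverified.

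The missing device, which closes both gaps at once, is the \emph{double} degeneration $\mu_0$ defined by $\mu_0^*=\nu^*-c_1\rho_1-c_2\rho_2$, i.e.\ removing from the dual both extreme summands responsible for the two type-(B) degenerations. The count of Theorem \ref{thm:exit(mu)=00003Droot(mu) etc} gives ${\rm root}(\mu_0)+{\rm root}(\mu_0^*)={\rm exit}(\mu_0)-2\le{\rm exit}(\nu)-4$, and since ${\rm root}(\mu_0^*)=n-2$ this forces ${\rm root}(\mu_0)\le 1$, so $\mu_0=k_0\nu_0$ is itself extreme with $k_0$ a common multiple of $k_1$ and $k_2$. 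Applying (\ref{eq:omega square +2}) to all four honeycombs $\nu,\mu_1,\mu_2,\mu_0$ yields the three relations
\[
\alpha_1\alpha_2=k_1^2-1,\quad \alpha_2\alpha_3=k_2^2-1,\quad \alpha_1\alpha_2+\alpha_2\alpha_3+\alpha_1\alpha_3=k_0^2-1,
\]
and writing $k_0=mk_1k_2$ and comparing $\alpha_1\alpha_3=m^2k_1^2k_2^2-k_1^2-k_2^2+1$ with $\alpha_1\alpha_3=(k_1^2-1)(k_2^2-1)/\alpha_2^2$ forces $m=\alpha_2=1$, $\alpha_1=k_1^2-1$, $\alpha_3=k_2^2-1$. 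Consecutiveness then comes for free: $\mu_0$ is by construction a simple degeneration of $\mu_1=k_1\nu_1$, so Proposition \ref{prop:possible degenerations}(3) applies to it directly and shows that $\beta_1=(\alpha_1+\alpha_2)/k_1=k_1$ and $k_2=k_0/k_1$ are consecutive terms of the sequence with $\sigma=\delta_1$ --- exactly the step your proposal cannot complete. Note also that part (2) falls out of the disjointness exclusion alone (among three distinct pairs of consecutive positions, two must be disjoint), without appealing to the full strength of part (3) as your sketch does. I would recommend reorganizing the proof around the construction of $\mu_0$.
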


\begin{proof}
Part (1) follows immediately from part (3) because the nonzero exit
multiplicities of $m_{1}\nu_{1}+m_{2}\nu_{2}$, and hence those of
$\nu$, are at least $2$. As observed already, the exit multiplicities
of a simple degeneration that is an extreme honeycomb are obtained
simply by replacing two neighboring exit multiplicities of $\nu$
by their sum and leaving the others multiplicities unchanged. 

If we write $\nu^{*}=\sum_{j=1}^{n}c_{j}\rho_{j}$ as above, this
situation corresponds to the fact that there exists a ray $I$ such
that $\rho_{j_{0}}(I)>0$ but $\rho_{j}(I)=0$ for every $j\ne j_{0}.$
Suppose that $\nu$ has at least two simple degenerations of this
type. Reordering the honeycombs $\rho_{j}$, we may assume that these
degenerations $\mu_{1}$ and $\mu_{2}$ satisfy $\mu_{1}^{*}=\nu^{*}-c_{1}\rho_{1}$
and $\mu_{2}^{*}=\nu^{*}-c_{2}\rho_{2}$. Consider also the degeneration
$\mu_{0}$ such that $\mu_{0}^{*}=\nu^{*}-c_{1}\rho_{1}-c_{2}\rho_{2}$.
Since there are two distinct rays $I_{1},I_{2}$ to which $\nu^{*}$
assigns positive multiplicity such $\mu_{i}^{*}$ assigns zero multiplicity
to $\mu_{i}$, $i=1,2,$ we see that ${\rm exit}(\mu_{0}^{*})\le{\rm exit}(\mu)-2$.
By Theorem \ref{thm:exit(mu)=00003Droot(mu) etc}, we have ${\rm exit}(\mu)=n+3$
and 
\[
{\rm root}(\mu_{0}^{*})+{\rm root}(\mu_{0})={\rm exit(\mu_{0})-2\le{\rm {\rm exit}(\mu)}-4}=n-1.
\]
Since ${\rm root}(\mu_{0}^{*})=n-2$, we see that ${\rm root}(\mu_{0})\le1$,
and thus $\mu_{0}$ is an extreme rigid honeycomb. Thus, there are
rigid tree honeycombs $\nu_{0},\nu_{1},\nu_{2}$ and integers $k_{0},k_{1},k_{2}>1$
such that $\mu_{j}=k_{j}\nu_{j}$ for $j=0,1,2$. Moreover, since
$\mu_{0}$ is also a simple degeneration of $\mu_{1}$, $k_{0}>k_{1}$;
similarly, $k_{0}>k_{2}$ and, in fact, $k_{0}$ is a multiple of
both $k_{1}$ and $k_{2}$. The exit pattern of $\mu_{1}$ is the
same as that of $\nu$, except that two consecutive exit multiplicities
$\alpha_{1},\alpha_{2}$ of $\nu$ are replaced by $\alpha_{1}+\alpha_{2}$.
We next exclude the possibility that the exit pattern of $\mu_{2}$
is the same as that of $\nu$, except that two consecutive exit multiplicities
$\alpha_{3},\alpha_{4}$, \emph{other than }$\alpha_{1},\alpha_{2}$,
are replaced by $\alpha_{3}+\alpha_{4}$. Suppose that this possibility
does arise. In this case, the exit multiplicities of the honeycombs
involved can be listed as follows.

\begin{tabular}{|c|c|c|c|}
\hline 
$\nu$ & $\alpha_{1},\alpha_{2}$ & $\alpha_{3},\alpha_{4}$ & $\alpha_{5},\dots$\tabularnewline
\hline 
\hline 
$\mu_{1}$ & $\alpha_{1}+\alpha_{2}=k_{1}\beta_{1}$ & $\alpha_{3}=k_{1}\beta_{3},\alpha_{4}=k_{1}\beta_{4}$ & $\alpha_{5}=k_{1}\beta_{5},\dots$\tabularnewline
\hline 
$\mu_{2}$ & $\alpha_{1}=k_{2}\gamma_{1},\alpha_{2}=k_{2}\gamma_{2}$ & $\alpha_{3}+\alpha_{4}=k_{2}\gamma_{3}$ & $\alpha_{5}=k_{2}\gamma_{5},\dots$\tabularnewline
\hline 
$\mu_{0}$ & $\alpha_{1}+\alpha_{2}=k_{0}\delta_{1}$ & $\alpha_{3}+\alpha_{4}=k_{0}\delta_{3}$ & $\alpha_{5}=k_{0}\delta_{5},\dots$\tabularnewline
\hline 
\end{tabular}

\noindent Here, $\beta_{j},\gamma_{j},\delta_{j}$ represent the nonzero
exit multiplicities of $\nu_{1},\nu_{2},\nu_{0}$, respectively. Note
that $\beta_{2},\gamma_{4},\delta_{2}$, and $\delta_{4}$ are not
listed because $\mu_{1},\mu_{2},$ and $\mu_{0}$ have fewer nonzero
exit multiplicities than $\nu$. We now apply (\ref{eq:omega square +2})
to the four rigid tree honeycombs involved to obtain
\begin{align*}
\alpha_{1}^{2}+\alpha_{2}^{2}+\alpha_{3}^{2}+\alpha_{4}^{2}+\sum_{j\ge5}\alpha_{j}^{2} & =\omega(\nu)^{2}+2,\\
(\alpha_{1}+\alpha_{2})^{2}+\alpha_{3}^{2}+\alpha{}_{4}^{2}+\sum_{j\ge5}\alpha_{j}^{2} & =k_{1}^{2}(\omega(\nu_{1})^{2}+2)=\omega(\nu)^{2}+2k_{1}^{2},\\
\alpha_{1}^{2}+\alpha_{2}^{2}+(\alpha_{3}+\alpha_{4})^{2}+\sum_{j\ge5}\alpha_{j}^{2} & =\omega(\nu)^{2}+2k_{2}^{2},\\
(\alpha_{1}+\alpha_{2})^{2}+(\alpha_{3}+\alpha_{4})^{2}+\sum_{j\ge5}\alpha_{j}^{2} & =\omega(\nu)^{2}+2k_{0}^{2}.
\end{align*}
Subtract now the first equality from the other three to see that
\[
\alpha_{1}\alpha_{2}=k_{1}^{2}-1,\quad\alpha_{3}\alpha_{4}=k_{2}^{2}-1,\quad\alpha_{1}\alpha_{2}+\alpha_{3}\alpha_{4}=k_{0}^{2}-1,
\]
and thus 
\[
k_{0}^{2}+1=k_{1}^{2}+k_{2}^{2}.
\]
Since $k_{0}$ is a common multiple of $k_{1}$ and $k_{2}$, it follows
that $k_{1}$ and $k_{2}$ are relatively prime, so $k_{0}=mk_{1}k_{2}$
for some $m\in\mathbb{N}$. Therefore
\[
k_{1}^{2}+k_{2}^{2}=m^{2}k_{1}^{2}k_{2}^{2}+1\ge k_{1}^{2}k_{2}^{2}+1,
\]
hence $(k_{1}^{2}-1)(k_{2}^{2}-1)\le0$, and this is impossible because
$k_{j}>1$.

We see right away that it is not possible to have three degenerations
of $\nu$ of this type because two of them would have to involve disjoint
sets of exit multiplicities. This proves (2).

Let now $\mu_{1},\mu_{2},\mu_{0}$, $\nu_{1},\nu_{2},\nu_{0}$, and
$k_{0},k_{1},k_{2}$ be as above. The preceding argument shows that
the exit multiplicities of these honeycombs can be arranged as follows.

\begin{tabular}{|c|c|c|}
\hline 
$\nu$ & $\alpha_{1},\alpha_{2},\alpha_{3}$ & $\alpha_{4},\dots$\tabularnewline
\hline 
\hline 
$\mu_{1}$ & $\alpha_{1}+\alpha_{2}=k_{1}\beta_{1},\alpha_{3}=k_{1}\beta_{3}$ & $\alpha_{4}=k_{1}\beta_{4},\dots$\tabularnewline
\hline 
$\mu_{2}$ & $\alpha_{1}=k_{2}\gamma_{1},\alpha_{2}+\alpha_{3}=k_{2}\gamma_{2}$ & $\alpha_{4}=k_{2}\gamma_{4},\dots$\tabularnewline
\hline 
$\mu_{0}$ & $\alpha_{1}+\alpha_{2}+\alpha_{3}=k_{0}\delta_{1}$  & $\alpha_{4}=k_{0}\delta_{4},\dots$\tabularnewline
\hline 
\end{tabular}

\noindent Of course, $\alpha_{1},\alpha_{2},$ and $\alpha_{3}$ must
be consecutive exit multiplicities corresponding to parallel rays.
As in the situation discussed above, the numbers $k_{1}$ and $k_{2}$
must be relatively prime. Indeed, a common factor $d$ of these integers
must divide $\omega(\nu)$, $\alpha_{1},$$\alpha_{3},$ and $\alpha_{j}$
for $j\ge4$, and therefore $d$ also divides $\alpha_{2}=3\omega(\nu)-\sum_{j\ne2}\alpha_{j}$.
Now, the relation $\sum_{j\ge1}\alpha_{j}^{2}=\omega(\nu)^{2}+2$
shows that $d^{2}$ divides $2$, so $d=1$. Thus $k_{0}=mk_{1}k_{2}$
for some $m\in\mathbb{N}$. We apply again (\ref{eq:omega square +2})
to obtain
\begin{align*}
\alpha_{1}^{2}+\alpha_{2}^{2}+\alpha_{3}^{2}+\sum_{j\ge4}\alpha_{j}^{2} & =\omega(\nu)^{2}+2,\\
(\alpha_{1}+\alpha_{2})^{2}+\alpha_{3}^{2}+\sum_{j\ge4}\alpha_{j}^{2} & =\omega(\nu)^{2}+2k_{1}^{2},\\
\alpha_{1}^{2}+(\alpha_{2}+\alpha_{3})^{2}+\sum_{j\ge4}\alpha_{j}^{2} & =\omega(\nu)^{2}+2k_{2}^{2},\\
(\alpha_{1}+\alpha_{2}+\alpha_{3})^{2}+\sum_{j\ge4}\alpha_{j}^{2} & =\omega(\nu)^{2}+2k_{0}^{2},
\end{align*}
and we subtract the first equality from the others to see that
\[
\alpha_{1}\alpha_{2}=k_{1}^{2}-1,\quad\alpha_{2}\alpha_{3}=k_{2}^{2}-1,\quad\alpha_{1}\alpha_{2}+\alpha_{2}\alpha_{3}+\alpha_{1}\alpha_{3}=k_{0}^{2}-1.
\]
 Thus
\[
\alpha_{1}\alpha_{3}=m^{2}k_{1}^{2}k_{2}^{2}-k_{1}^{2}-k_{2}^{2}+1\ge(k_{1}^{2}-1)(k_{2}^{2}-1),
\]
and since $\alpha_{1}=(k_{1}^{2}-1)/\alpha_{2},$ $\alpha_{3}=(k_{2}^{2}-1)/\alpha_{2}$,
these relations are only possible when $m=\alpha_{2}=1,$ $\alpha_{1}=k_{1}^{2}-1$,
and $\alpha_{3}=k_{2}^{2}-1$. Now set $\sigma=\delta_{1}$ and let
$\{p_{n}\}_{n=0}^{\infty}$ be defined by (\ref{eq:the Pell sequence defined}).
Since $\mu_{0}$ is a simple degeneration of $\mu_{1}$, it follows
from Proposition \ref{prop:possible degenerations}(3) that $\beta_{1},k_{2}=k_{0}/k_{1},$
and $\beta_{3}$ are consecutive terms of this sequence. Finally,
$\beta_{1}=(\alpha_{1}+\alpha_{2})/k_{1}=k_{1}$, and therefore $k_{1}$
and $k_{2}$ are consecutive terms in the sequence $\{p_{n}\}_{n=0}^{\infty}$.
This concludes the proof of (3).
\end{proof}

\section{\label{sec:de-degeneration}Regeneration}

In Section \ref{sec:Degeneration-of-a rigid honey}, we found necessary
conditions for a honeycomb to be one of the simple degenerations of
a rigid tree honeycomb. Our purpose in this section is to show that
these necessary conditions are sufficient as well. The argument requires
a basic construction of rigid tree honeycombs that involves a combination
of an inflation with a partial deflation. 

\subsection*{Construction}

We start with the following data:
\begin{enumerate}
\item a rigid honeycomb $\nu$.
\item a\emph{ }rigid honeycomb $\mu$ that is compatible with the puzzle
of $\nu$. We set $\nu'=\mu_{\nu}$, where $\mu_{\nu}$ is defined
in Section \ref{sec:Honeycombs-compatible-with-a}. It follows from
\cite[Theorem 7.14]{derksen} and \cite[Theorem 1.4]{king-tollu}
(see also the discussion following \cite[Theorem 7.2]{blt}) that
$\nu'$ is also rigid.
\item a ray $f$ in the support of $\mu$.
\end{enumerate}
We proceed in three steps.
\begin{enumerate}
\item [(i)]Construct the puzzle of $\mu$ and preserve the coloring of
the `white' pieces. Color the remainder of the pieces as follows:
\begin{enumerate}
\item The pieces corresponding to branch points of $\mu$ are colored light
red.
\item The inflation of (part of) an edge is colored dark red if it is contained
in the closure of a white piece of the puzzle of $\nu$ or if it crosses
a dark gray parallelogram and is parallel to its white sides. Two
of the sides of a dark red parallelograms are white and the other
two are light red.
\item The inflation of (part of) an edge is colored black if it is contained
in the closure of a light gray piece of the puzzle of $\nu$ or if
it crosses a dark gray parallelogram and is parallel to its light
gray sides.
\end{enumerate}
\item [(ii)]Construct the measure $\widetilde{\mu}$ with support in the
edges of the puzzle of $\mu$ and rooted in the incoming ray corresponding
to $f$ (Theorem \ref{thm:stretched honeycomb}).
\item [(iii)]Remove all the black, dark gray, and light gray areas of the
puzzle constructed in (i), translate the remaining pieces to tile
the plane, and preserve the multiplicities assigned by $\widetilde{\mu}$
for those (parts of) edges that are contained in the closure of a
white piece or in a dark gray parallelogram and are parallel to its
white sides. Add multiplicities in case several such edges are translated
to the same segment. Call $\mu'$ the collection of multiplicities
obtained this way.
\end{enumerate}
\begin{figure}

\includegraphics[scale=0.4]{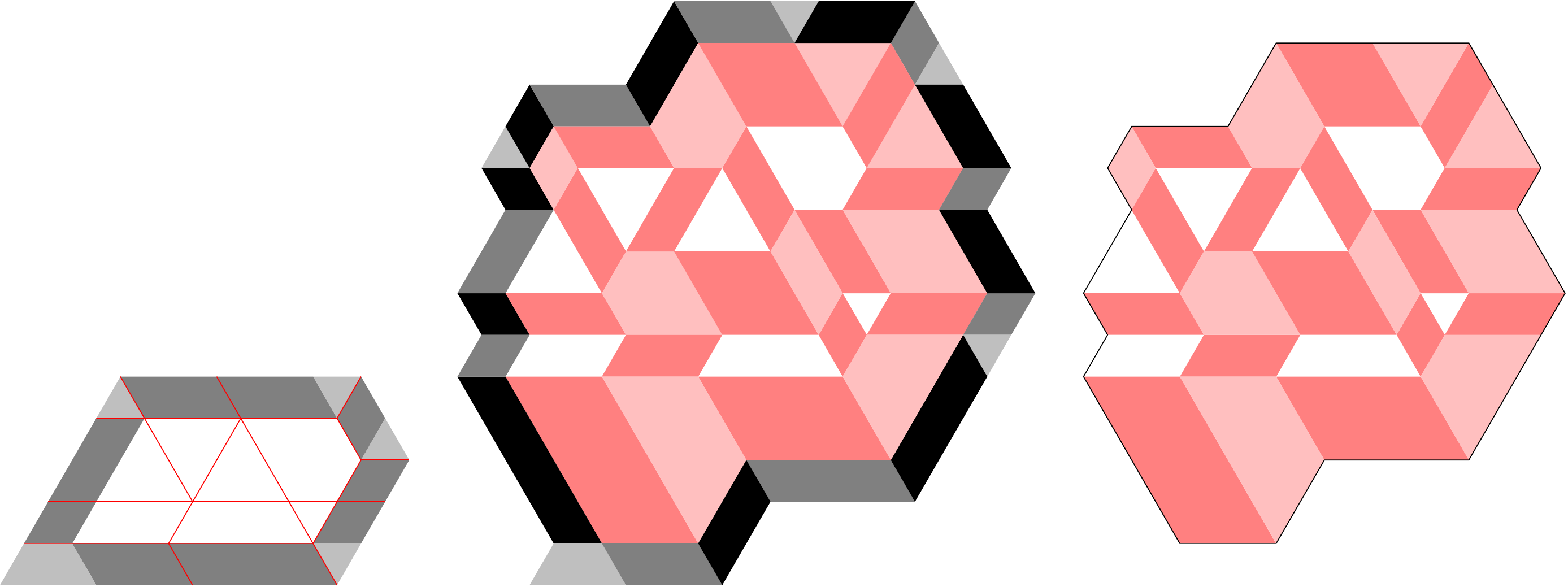}\caption{\label{fig:white puzzle piece}The basic construction inside a white
puzzle piece}

\end{figure}

\begin{figure}

\includegraphics[scale=0.4]{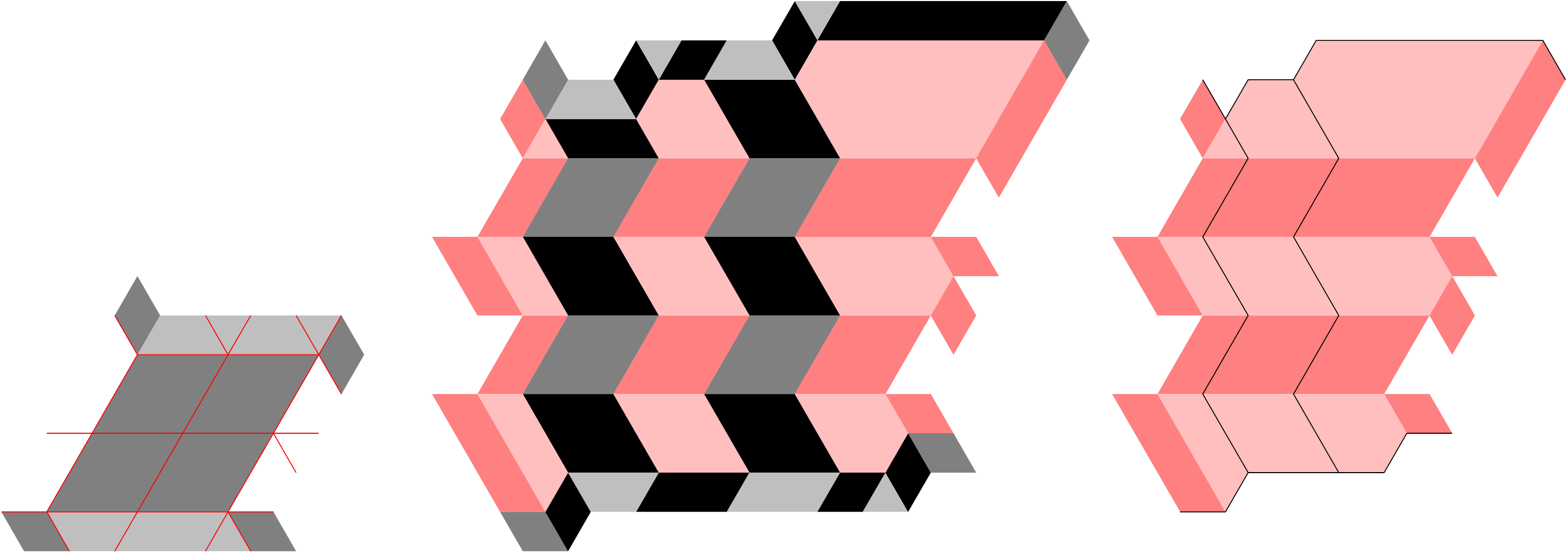}\caption{The basic construction inside a dark gray parallelogram\label{fig:dark parallelogram}}
\end{figure}

\begin{figure}
\includegraphics[scale=0.4]{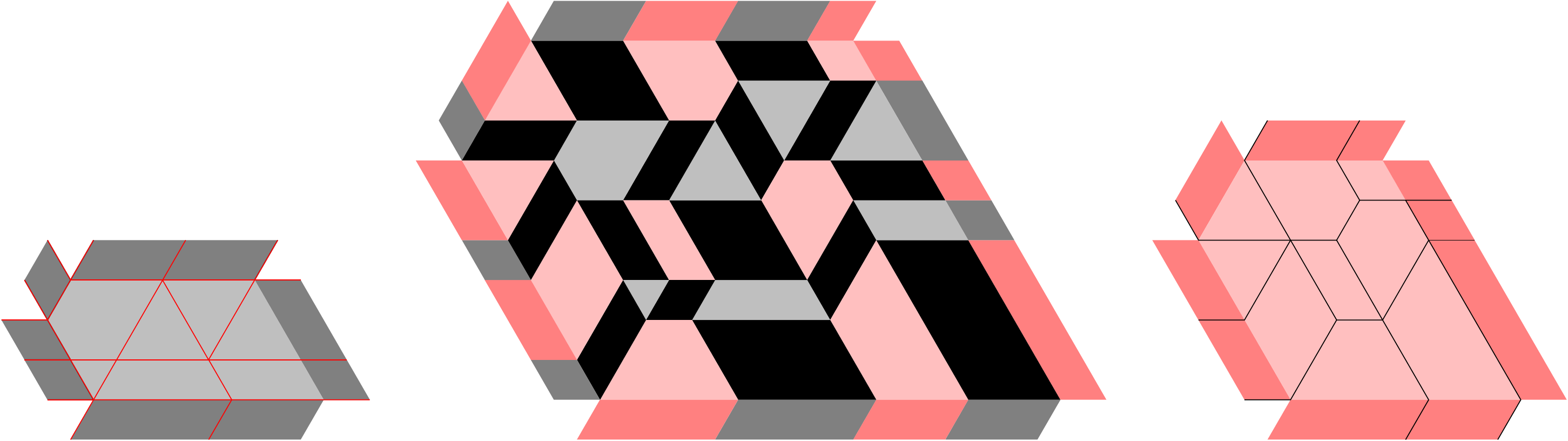}

\caption{The basic construction inside a light gray puzzle piece\label{fig:ligh gray puzzle piece}}

\end{figure}

\begin{prop}
\label{prop:inflation-deflation construction}The construction above
yields:
\begin{enumerate}
\item The puzzle of $\nu'$, with the color gray replaced by red, and
\item A rigid honeycomb $\mu'$ that is compatible with the puzzle of $\nu'$.
\end{enumerate}
\end{prop}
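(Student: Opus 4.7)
The plan is to verify the proposition by a local analysis inside each of the three types of pieces of the puzzle of $\nu$: white pieces, dark gray parallelograms, and light gray pieces associated to branch points of $\nu$. The three referenced figures display precisely these three local pictures, and the bulk of the proof will be to confirm that the outputs of the construction fit together into the puzzle of $\nu'$ and that the associated multiplicities assemble into a rigid, compatible $\mu'$.

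Inside a white puzzle piece $P$ of $\nu$, the portion of the support of $\mu$ lying in $P$ is transported together with $P$ under the deflation of the puzzle of $\nu$, and this restriction is precisely the portion of $\nu'=\mu_\nu$ lying in the image of $P$. Step (i) produces inside $P$ the puzzle of the restriction of $\mu$ to $P$, with branch points of $\mu$ labeled light red and with inflations of edges labeled dark red, both because they are contained in the closure of $P$. Step (iii) preserves these pieces and the associated multiplicities of $\widetilde{\mu}$ intact, so locally we obtain exactly the puzzle of the restriction of $\nu'$ to the image of $P$ (with gray replaced by red) together with a compatible honeycomb supported inside it.

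Inside a dark gray parallelogram $Q$ of the puzzle of $\nu$, compatibility of $\mu$ with the puzzle of $\nu$ forces any edge of $\mu$ crossing $Q$ to be parallel to the white sides of $Q$, so its inflation is a dark red parallelogram spanning $Q$ in the direction of those white sides, while any further edges of $\widetilde{\mu}$ traversing $Q$ are necessarily transversal and are colored black. Removing the dark gray region and all black inflations in step (iii) identifies the two white sides of $Q$, and the dark red parallelograms glue into single strips that are precisely the inflations of the corresponding edges of $\nu'$ produced by deflating $Q$. Inside a light gray piece of $\nu$ there is no support of $\mu$, so the whole piece together with any black crossings is discarded, and the surrounding dark red parallelograms reassemble around a light red region that is the inflation of the corresponding branch point of $\nu'$.

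Global assembly then follows because the deflation of the puzzle of $\nu$ already tiles the plane; the resulting color scheme matches the puzzle of $\nu'$ with gray replaced by red, proving (1). The balance condition for $\mu'$ at each of its branch points reduces to the balance condition for $\widetilde{\mu}$ together with the addition rule in step (iii), and compatibility of $\mu'$ with the puzzle of $\nu'$ is automatic since the edges of $\mu'$ entering a dark red parallelogram come from edges of $\widetilde{\mu}$ running parallel to the white sides. Rigidity of $\mu'$ is then deduced from the rigidity of $\widetilde{\mu}$, guaranteed by Theorem \ref{thm:honey in the boundary of puzzles}, together with the observation that discarding edges supported in the black and light gray regions of the auxiliary puzzle cannot introduce an evil loop or a six-edge branch point. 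I expect the main obstacle to be the balance verification at the new branch points created inside a parallelogram $Q$, which requires careful bookkeeping of how multiplicities sum when several dark red pieces coalesce; this combinatorics is exactly what Figures \ref{fig:dark parallelogram} and \ref{fig:ligh gray puzzle piece} are designed to encode.
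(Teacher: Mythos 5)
Your overall strategy --- a local, piece-by-piece verification followed by a global assembly, organized around the three figures --- is the same as the paper's. However, there is a genuine gap in your treatment of the light gray pieces: you assert that inside a light gray piece of $\nu$ there is no support of $\mu$, and on that basis you simply discard the whole piece and let the surrounding dark red parallelograms close up around a light red region. This premise is false. Compatibility of $\mu$ with the puzzle of $\nu$ constrains $\mu$ only on the dark gray parallelograms; the support of $\mu$ may, and in the cases of interest does, enter the light gray pieces, carry branch points there, and cross the dark gray parallelograms parallel to their \emph{light gray} sides. That is exactly why step (i)(c) of the construction introduces the color black for the inflations of such edges, and why Figure \ref{fig:ligh gray puzzle piece} is devoted to a branch point of $\mu$ of order five sitting inside a light gray piece. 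The delicate part of assertion (1) is to check that, after the black, dark gray, and light gray areas are removed, the light red pieces arising from branch points of $\mu$ located inside, on the boundary of, or at a corner of a light gray piece still fit together with the white and dark red pieces to reconstitute the puzzle of $\nu'$; your argument skips this case entirely.

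A second, smaller issue is your rigidity argument for $\mu'$. You propose to deduce it from the rigidity of $\widetilde{\mu}$ by claiming that discarding the edges supported in the black and light gray regions ``cannot introduce an evil loop or a six-edge branch point.'' But step (iii) shrinks edges to zero length and thereby merges branch points and coalesces edges, and such mergers can in general create six-edge vertices or new loops, so this claim needs justification rather than being an observation. The paper's route avoids this: $\mu'$ is obtained from $\widetilde{\mu}$ by decreasing edge lengths without changing multiplicities (adding them when two edges are translated onto the same segment), so $\mu'$ is a degeneration of $\widetilde{\mu}$ in the sense of Section \ref{sec:Degeneration-of-a rigid honey}, and degenerations of rigid honeycombs are automatically rigid. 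You should replace your combinatorial claim with this degeneration argument, and supply the missing analysis of the light gray pieces.
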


\begin{proof}
The white areas in the puzzle of $\nu$ are divided by the support
of $\mu$ into smaller regions, and it is these regions that are the
white puzzle pieces that remain after the operation described in (iii)
above. Suppose that two such areas $G_{1}$ and $G_{2}$ are separated
by an edge $e$ of $\mu$. Then their counterparts after operation
(iii) are separated by a dark red parallelogram with two edges parallel
to $e$ and two edges of length $\mu(e)$ that are $60^{\circ}$ clockwise
from $e$. Of course, $e$ is the translate of some edge $e'$ of
$\nu'$ with multiplicity $\nu(e')=\mu(e)$, and thus the dark red
parallelogram is the one that normally appears in the construction
of the puzzle of $\nu'$. Another possibility is that $G_{1}$ and
$G_{2}$ are separated by a dark gray parallelogram in the puzzle
of $\nu$. Suppose that $AB$ and $A'B'$ are the two white sides
of that parallelogram, and that $I_{1},\dots,I_{m}$ are the edges
of $\mu$ that cross the parallelogram and are parallel to its white
sides. Then, in the picture produced by (iii), $G_{1}$ and $G_{2}$
are separated by a dark red parallelogram that is decomposed into
$m+2$ parallelograms with light red sides of lengths $\mu(AB),\mu(I_{1}),\dots,\mu(I_{m}),\mu(A'B')$.
The sum of these lengths is precisely $\nu'(e')$, where $e'$ is
the edge of $\nu$ with inflation $ABA'B'$. Similarly, the light
red pieces assemble in step (iii) to complete the puzzle of $\nu'$.
This process is illustrated in Figure \ref{fig:white puzzle piece}
for white pieces in the puzzle of $\nu$, in Figure \ref{fig:ligh gray puzzle piece}
for light gray puzzle pieces, and in Figure \ref{fig:dark parallelogram}
for dark gray puzzle pieces. These figures are sufficiently general
(they deal with branch points of order five contained in the interior,
on the boundary, or in a corner of a puzzle piece) to demonstrate
that the white, dark red, and light red pieces do actually fit together
to form the puzzle of $\nu'$. The multiplicities chosen for these
figures are the smallest positive integers that satisfy the the balance
condition on the (red) edges of $\mu$. Choosing different multiplicities
(possibly zero) produce essentially the same picture (possibly without
some of the dark red parallelograms).

In order to verify that $\mu'$ also satisfies the balance condition
we observe that $\mu'$ is obtained from $\widetilde{\mu}$ by decreasing
the lengths of its edges without changing their multiplicities, except
in those cases in which two edges are translated to the same segment
and their multiplicities are added. In other words, $\mu'$ is a degeneration
of $\widetilde{\mu}$ and is therefore a rigid honeycomb.
\end{proof}
\begin{thm}
\label{thm:overlay with coefficients>1}Suppose that $\nu_{1}$ and
$\nu_{2}$ are rigid tree honeycombs such that $\Sigma_{\nu_{1}}(\nu_{2})=\sigma\ge1$
and $\Sigma_{\nu_{2}}(\nu_{1})=0$. Let $\{p_{n}\}_{n=0}^{\infty}$
be the sequence defined by \emph{(\ref{eq:the Pell sequence defined})}.
Then, for every $n\ge1$, there exists a rigid tree honeycomb $\mu_{n}$
such that:
\begin{enumerate}
\item $\mu_{n}$ and $p_{n}\nu_{1}+p_{n+1}\nu_{2}$ have the same exit pattern,
and
\item $\mu_{n}$ is compatible with the puzzle of $p_{n-1}\nu_{1}+p_{n}\nu_{2}$. 
\end{enumerate}
\end{thm}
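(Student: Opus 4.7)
My plan is to argue by induction on $n$, using the inflation-deflation construction of Proposition~\ref{prop:inflation-deflation construction} as the inductive engine. For the base case $n=1$, set $\mu_1 := \widehat{\nu}_1$ as supplied by Theorem~\ref{thm:basic inductive construction}; this is a rigid tree honeycomb compatible with the puzzle of $\nu_2 = p_0 \nu_1 + p_1 \nu_2$ whose exit pattern matches $\nu_1 + \sigma \nu_2 = p_1 \nu_1 + p_2 \nu_2$. The proof of that theorem further records the identity $(\mu_1)_{\nu_2} = \nu_1 + \sigma \nu_2$, which I promote to an auxiliary inductive hypothesis so that the next stage has a correctly identified ambient puzzle.

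For the inductive step, suppose $\mu_n$ has been built satisfying conclusions (1), (2), and the auxiliary identity $(\mu_n)_{\nu} = p_n \nu_1 + p_{n+1} \nu_2$, where $\nu := p_{n-1} \nu_1 + p_n \nu_2$ (rigid by Theorem~\ref{thm:rigid overlays}, since $\Sigma_{\nu_2}(\nu_1) = 0$). Apply Proposition~\ref{prop:inflation-deflation construction} to the data $(\nu, \mu_n, f)$ for an appropriate choice of root ray $f$ of $\mu_n$. The construction produces a rigid honeycomb $\mu_{n+1}$ compatible with the puzzle of $\nu' := (\mu_n)_{\nu} = p_n \nu_1 + p_{n+1} \nu_2$, yielding conclusion (2) of the theorem at index $n+1$.

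It remains to verify (a) that $\mu_{n+1}$ is extreme (hence a rigid tree honeycomb, since the root has unit multiplicity by construction), (b) that its exit pattern agrees with that of $p_{n+1}\nu_1 + p_{n+2}\nu_2$, and (c) the auxiliary identity $(\mu_{n+1})_{\nu'} = p_{n+1}\nu_1 + p_{n+2}\nu_2$. For (a), the intermediate honeycomb $\widetilde{\mu}$ produced in step~(ii) of the construction is a rigid tree honeycomb by Proposition~\ref{prop:multiple of a rigid tree, original version}, rooted at the incoming ray of the inflation of $f$; the partial deflation in step~(iii) does not split it into disjoint trees. For (b), I would compute the exit multiplicity of $\mu_{n+1}$ on a typical ray $e$ with $\nu_j(e) = \beta_j$ using the sigma-additivity technique of Remark~\ref{rem:general honeycomb arising from a ray of a rigid honey}. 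Starting from $\mu_n(e) = p_n \beta_1 + p_{n+1}\beta_2$ and using that the relevant sigma invariant accumulates as $\sigma p_n$, one application of the construction gives a new multiplicity that collapses to $p_{n+1}\beta_1 + p_{n+2}\beta_2$ via the Pell recurrence $p_{n+2} = \sigma p_{n+1} - p_n$. The exceptional contributions at the inflation of $f$ are controlled by the identities $p_{n+1}^{2} - 1 = p_n p_{n+2}$ and $p_n p_{n+1} - \sigma = p_{n-1} p_{n+2}$ already recorded in Section~\ref{sec:Degeneration-of-a rigid honey}.

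The main obstacle will be the explicit verification of (c). Support containment and rigidity force $(\mu_{n+1})_{\nu'} = a \nu_1 + b \nu_2$ for some integers $a, b \ge 0$; matching the total weight $p_{n+1}\omega(\nu_1) + p_{n+2}\omega(\nu_2)$ gives one linear equation, and reading off the multiplicity of $(\mu_{n+1})_{\nu'}$ on an edge in the support of $\nu_1$ but not of $\nu_2$ (which exists because $\nu_1 \ne \nu_2$) pins down $a = p_{n+1}$, whence $b = p_{n+2}$. When the supports of $\nu_1$ and $\nu_2$ share many edges, one can alternatively invoke the duality formula $\mu_{n+1}^{*} = ((\mu_{n+1})_{\nu'})^{*} + ((\mu_{n+1})_{\nu'^{*}})^{*}$ from Section~\ref{sec:Honeycombs-compatible-with-a} to separate the two summands via their exit patterns in $\mathcal{M}_*$.
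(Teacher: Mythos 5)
Your overall framework (induction on $n$, base case from Theorem \ref{thm:basic inductive construction}, inductive step via the inflation/partial-deflation construction) matches the paper, but there is a genuine gap at the heart of your step (a): the honeycomb $\mu'$ produced by Proposition \ref{prop:inflation-deflation construction} is \emph{not} the desired $\mu_{n+1}$, and it is not extreme. Its weight alone rules this out: $\omega(\mu')=\omega(\widetilde{\mu})=(p_{n}\alpha_{0}+p_{n+1}\beta_{0})\,\omega(p_{n}\nu_{1}+p_{n+1}\nu_{2})$, which is far larger than $\omega(p_{n+1}\nu_{1}+p_{n+2}\nu_{2})$, and its typical exit multiplicity is $(p_{n}\alpha_{0}+p_{n+1}\beta_{0})(p_{n}\alpha+p_{n+1}\beta)$ --- still carrying $(p_{n},p_{n+1})$ coefficients, not $(p_{n+1},p_{n+2})$. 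Your assertion that ``the partial deflation in step (iii) does not split it into disjoint trees'' is exactly where the argument breaks: the paper shows that ${\rm root}(\mu')=2$, by first exhibiting a nonzero summand $\rho$ (the rigid tree honeycomb supported on the puzzle edges of $\nu'$ and rooted at the incoming ray of the inflation of $e_{0}$, extracted via Lemma \ref{lem:compatible puzzle does not leave the edges of the puzzle once there}), checking $\rho\ne\mu'$, and then using (\ref{eq:roots and exits of degeneration}) to cap the number of roots at two. The desired $\mu_{n+1}$ is the \emph{other} extreme summand: $\mu'=\rho+\gamma\mu_{n+1}$ with $\gamma=p_{n-1}\alpha_{0}+p_{n}\beta_{0}$.

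This is not a cosmetic omission, because the Pell recurrence enters only through the subtraction of $\rho$: the exit multiplicity of $\mu'-\rho$ on a typical outgoing ray is $(p_{n}\alpha_{0}+p_{n+1}\beta_{0})(p_{n}\alpha+p_{n+1}\beta)-\alpha_{0}\alpha-(\beta_{0}+\sigma\alpha_{0})\beta$, and it is this difference that the identities $p_{n+1}^{2}-1=p_{n}p_{n+2}$ and $p_{n}p_{n+1}-\sigma=p_{n-1}p_{n+2}$ convert into $(p_{n-1}\alpha_{0}+p_{n}\beta_{0})(p_{n+1}\alpha+p_{n+2}\beta)$. Your step (b), which applies these identities directly to $\mu'$, therefore cannot be carried out as written. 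A secondary issue: the ray $e_{0}$ cannot be just ``an appropriate root ray''; one needs $\beta_{0}>0$ (i.e., $\nu_{2}$ contributes to that exit) in order to show $\rho\ne\mu'$, hence ${\rm root}(\mu')\ge2$, and such a ray need not be disjoint from the support of $\nu_{1}$ (Figure \ref{fig:same-exits}). Your step (c), identifying $(\mu')_{\nu'}$, is fine in spirit but the correct identity is $(\mu')_{\nu'}=(p_{n}\alpha_{0}+p_{n+1}\beta_{0})(p_{n}\nu_{1}+p_{n+1}\nu_{2})$, again reflecting that $\mu'$ itself is the inflated object, not the tree honeycomb you are after.
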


\begin{proof}
We proceed by induction. The existence of $\mu_{1}=\widehat{\nu}_{1}$
is a consequence of Theorem \ref{thm:basic inductive construction}.
Suppose that $n\in\mathbb{N}$ and that $\mu_{n}$ has been constructed.
Set $\nu=p_{n-1}\nu_{1}+p_{n}\nu_{2}$ and $\nu'=(\mu_{n})_{\nu}=p_{n}\nu_{1}+p_{n+1}\nu_{2}$.
The nonzero exit multiplicity of $\mu_{n}$ corresponding to a typical
ray $e$ is of the form $p_{n}\alpha+p_{n+1}\beta$, where $\alpha$
and $\beta$ are exit multiplicities of $\nu_{1}$ and $\nu_{2}$,
respectively, and $\alpha+\beta>0$. Choose a particular ray $e_{0}$
of $\mu_{n}$ with exit multiplicity $p_{n}\alpha_{0}+p_{n+1}\beta_{0}$
such that $\beta_{0}>0$. We apply the construction above to the measures
$\nu$, $\mu=\mu_{n},$ and the incoming ray $f$ in the inflation
of $e_{0}$. We obtain a honeycomb $\mu'$ compatible with the puzzle
of $\nu'$. The honeycomb $\widetilde{\mu}$ constructed in step (ii)
has exit multiplicities $(p_{n}\alpha_{0}+p_{n+1}\beta_{0})^{2}-1$
and $1$ corresponding to the outgoing and incoming rays of the puzzle
of $\mu$ corresponding to $e_{0}$. The other nonzero exit multiplicities
correspond to the remaining outgoing rays and they are of the form
$(p_{n}\alpha_{0}+p_{n+1}\beta_{0})(p_{n}\alpha+p_{n+1}\beta)$. It
follows, in particular, that
\[
\mu'_{\nu'}=(p_{n}\alpha_{0}+p_{n+1}\beta_{0})(p_{n}\nu_{1}+p_{n+1}\nu_{2}).
\]
Next, observe that $\mu'$ assigns unit multiplicity to the incoming
ray in the inflation of $e_{0}$ in the puzzle of $\nu'$. Denote
by $\rho$ the rigid tree honeycomb supported by the puzzle edges
of $\nu'$ and rooted in the incoming ray corresponding to $e_{0}$.
Lemma \ref{lem:compatible puzzle does not leave the edges of the puzzle once there}
implies that there exists a rigid honeycomb $\mu''$ such that $\mu'=\rho+\mu''$.
Suppose that $f\ne f_{0}$ is an arbitrary ray of $\nu'$ such that
$\nu_{1}(f)=\alpha$ and $\nu_{2}(f)=\beta.$ Then, according to Example
\ref{exa:rigid overlay of two}, the multiplicity that $\rho$ assigns
to the exit ray corresponding to $e\ne e_{0}$ in the puzzle of $\nu'$
is $\alpha_{0}\alpha+(\beta_{0}+\sigma\alpha_{0})\beta$. Therefore
$\rho\ne\mu'$, so $\mu''\ne0$ and thus ${\rm root}(\mu')\ge2$.
Now, $\nu'$ is a degeneration of $\mu'$ and ${\rm exit}(\mu')={\rm exit}(\nu')$,
so (\ref{eq:roots and exits of degeneration}) implies 
\[
2-{\rm root}(\mu')={\rm root}(\nu')-{\rm root}(\mu')={\rm root}(\mu'^{*})-{\rm root}(\nu'^{*})\ge0.
\]
 We conclude that ${\rm root}(\mu')=2$, and thus $\mu''=\gamma\mu'''$
for some rigid tree measure $\mu'''$ and some $\gamma>0$. We conclude
the proof by showing that $\mu'''$ has the same exit pattern as $p_{n+1}\nu_{1}+p_{n+2}\nu_{2}$
and thus $\mu_{n+1}=\mu'''$ satisfies the conclusion of the theorem
with $n$ replaced by $n+1$. We start with a typical ray $e\ne e_{0}$
in the support of $\nu'$ and calculate the density that $\mu''$
assigns to the outgoing ray in the inflation of $\nu'$:
\[
(p_{n}\alpha_{0}+p_{n+1}\beta_{0})(p_{n}\alpha+p_{n+1}\beta)-\alpha_{0}\alpha-(\beta_{0}+\sigma\alpha_{0})\beta=c\alpha+d\beta.
\]
Here 
\begin{align*}
c & =(p_{n}\alpha_{0}+p_{n+1}\beta_{0})p_{n}-\alpha_{0}\\
 & =(p_{n}^{2}-1)\alpha_{0}+p_{n}p_{n+1}\beta_{0}\\
 & =p_{n-1}p_{n+1}\alpha_{0}+p_{n}p_{n+1}\beta_{0}=(p_{n-1}\alpha_{0}+p_{n}\beta_{0})p_{n+1},
\end{align*}
where we used (\ref{eq:p square -1}). Similarly, using (\ref{eq:p square -1})
and (\ref{eq:p_np_n+2-sigma}) we obtain
\begin{align*}
d & =(p_{n}\alpha_{0}+p_{n+1}\beta_{0})p_{n+1}-\beta_{0}-\sigma\alpha_{0}\\
 & =(p_{n}p_{n+1}-\sigma)\alpha_{0}+(p_{n+1}^{2}-1)\beta_{0}\\
 & =p_{n-1}p_{n+2}\alpha_{0}+p_{n}p_{n+2}\beta_{0}=(p_{n-1}\alpha_{0}+p_{n}\beta_{0})p_{n+2}.
\end{align*}
Similar calculations apply to the outgoing ray in the inflation of
$e_{0}$, while the outgoing ray in that inflation is assigned multiplicity
$1-1=0$ by $\mu''$. Thus, the exit pattern of $\mu''$ is the same
as that of $(p_{n-1}\alpha_{0}+p_{n}\beta_{0})(p_{n+1}\nu_{1}+p_{n+2}\nu_{2})$,
and the desired conclusion follows along with the equality $\gamma=p_{n-1}\alpha_{0}+p_{n}\beta_{0}$.
\end{proof}
\begin{rem}
The proof above would be slightly simpler if one could choose $\alpha_{0}=0$
and $\beta_{0}>0$. This however is not always possible because there
are rigid overlays $\nu_{1}+\nu_{2}$ such that $\nu_{1}$ and $\nu_{2}$
assign positive multiplicity to precisely the same rays, as seen in
the two examples in Figure \ref{fig:same-exits} (the edges of $\nu_{2}$
outside the support of $\nu_{1}$ are drawn with dotted lines).
\begin{figure}
\includegraphics[scale=0.3]{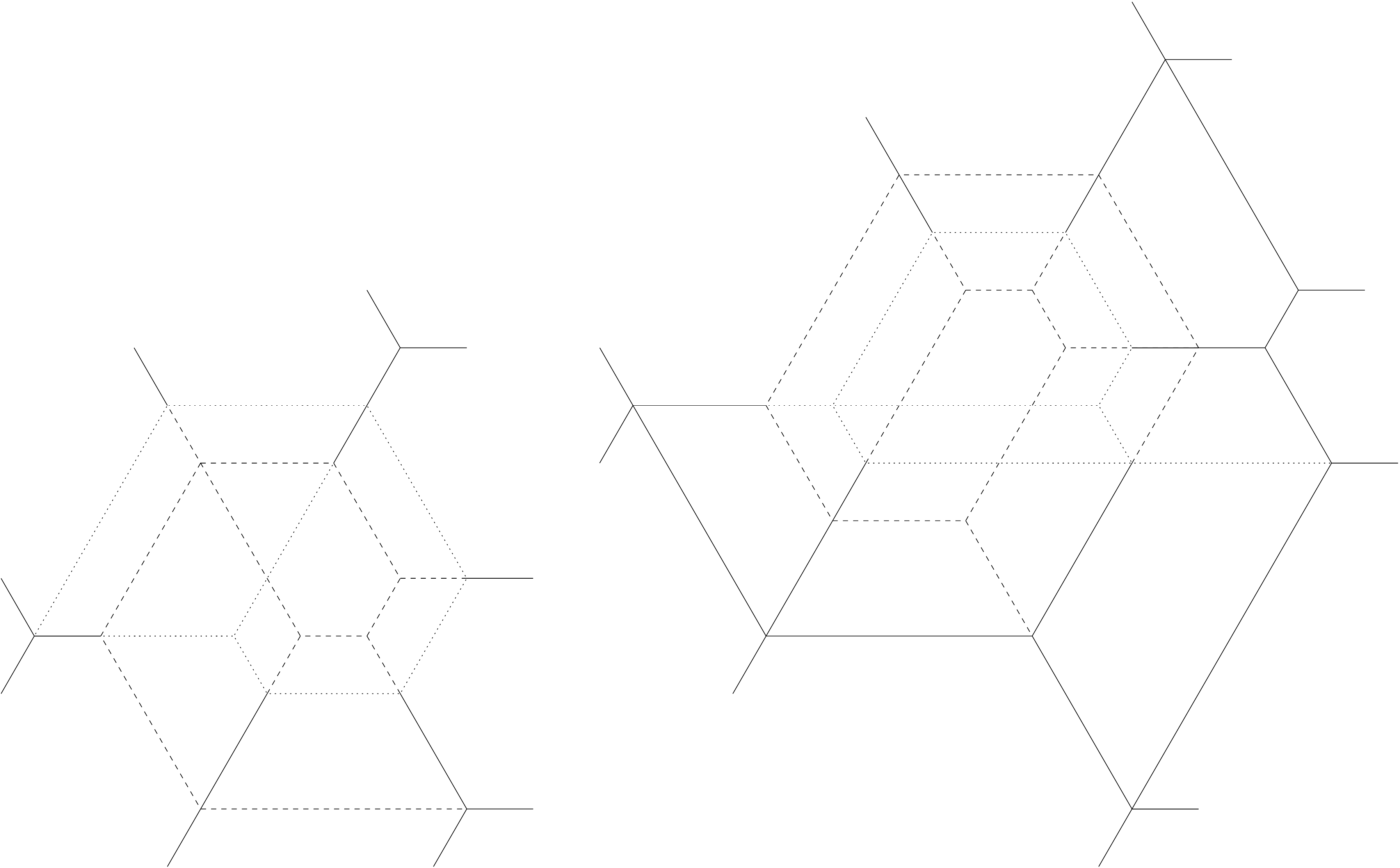}

\caption{\label{fig:same-exits}Rigid overlays where the nonzero exit multiplicities
of $\nu_{1}$ and $\nu_{2}$ correspond to the same rays}

\end{figure}
\end{rem}

\begin{example}
We illustrate the entire process in the proof of Theorem \ref{thm:overlay with coefficients>1}
for the overlay pictured in Figure \ref{fig:overlay example}. In
this example, $\Sigma_{\nu_{1}}(\nu_{2})=1$, and the red lines in
the second part of the picture represent the support of the honeycomb
$\mu_{1}=\widehat{\nu}_{1}$, compatible with the puzzle of $\nu_{2}$.
\begin{figure}
\includegraphics[scale=0.3]{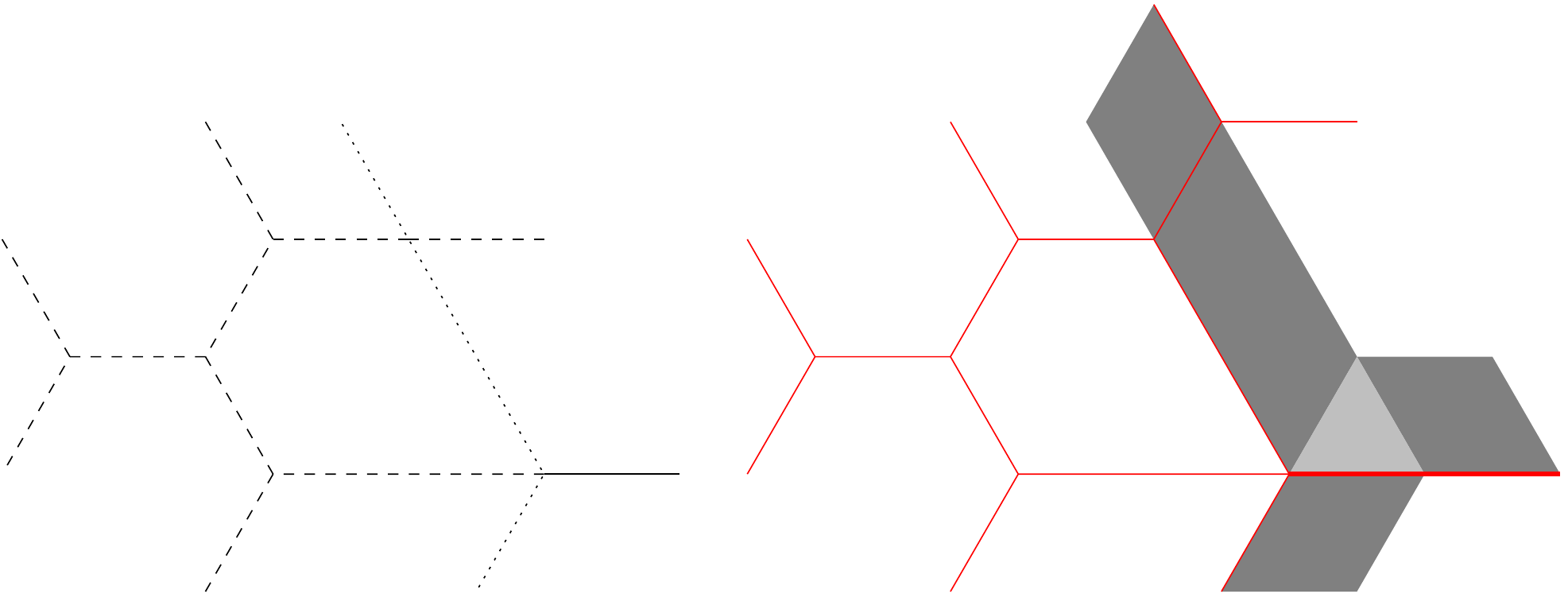}

\caption{\label{fig:overlay example}A simple overlay with $\Sigma_{\nu_{1}}(\nu_{2})=1$}

\end{figure}
 The inflation of $\mu_{1}$, colored as in the proof above is shown
in Figure \ref{fig:existence proof sigma=00003D1}. The support of
$\mu'$ is colored blue and the black dot indicates the incoming ray
$f$. The second part of the picture represents the support of $\mu_{2}$
(in black), and the parts of the support of $\rho$ not covered by
the support of $\mu_{2}$ (in blue). Since $\sigma=1$, the honeycomb
$\mu_{2}$ has the exit pattern of $p_{2}\nu_{1}+p_{3}\nu_{2}=\nu_{1}$,
so $\mu_{2}$ is homologous to $\nu_{1}$. 
\begin{figure}
\includegraphics[scale=0.3]{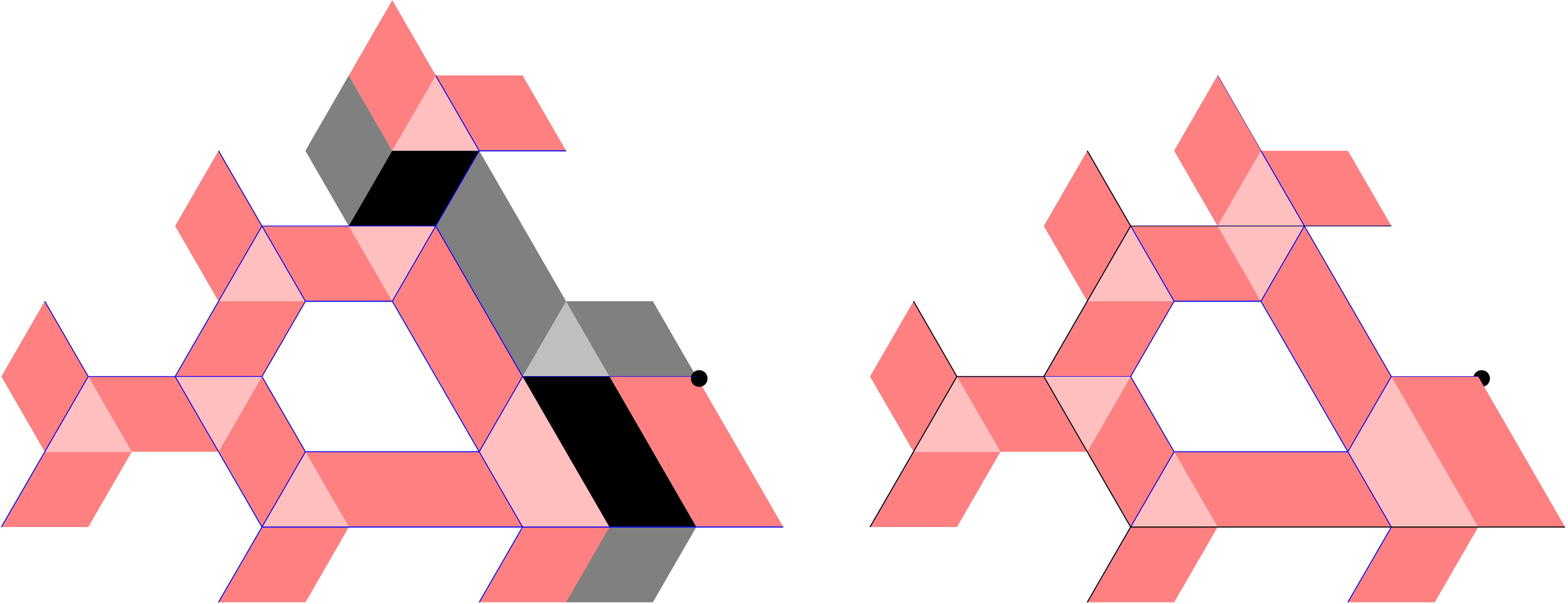}

\caption{\label{fig:existence proof sigma=00003D1}The inflation of $\mu_{1}$,
the support of $\mu''=\widetilde{\mu}_{1}$. Also, the support of
$\mu''$ (black) and the edges of $\rho$ not contained in the support
of $\mu'=\mu_{2}$}

\end{figure}
\end{example}

\begin{cor}
\label{cor:overlay with coef>1 via symmetry}With the notation of
the Theorem\emph{ \ref{thm:overlay with coefficients>1}}, for every
$n\in\mathbb{N}$ there exists a rigid tree honeycomb $\mu_{n}'$
with the same exit pattern as $p_{n+1}\nu_{1}+p_{n}\nu_{2}$.
\end{cor}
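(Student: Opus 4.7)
The plan is to mimic the reflection argument used in Corollary \ref{cor:multiple of a rigid tree, mirror image}, applying Theorem \ref{thm:overlay with coefficients>1} to a suitable mirror image of the data.

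First, let $\nu_1'$ and $\nu_2'$ denote the honeycombs obtained by reflecting $\nu_1$ and $\nu_2$ in a line parallel to $w_1$. Reflection in such a line preserves rigidity and the tree property, and, because the roles of `clockwise' and `counterclockwise' are interchanged, it has the effect of swapping the two arguments of the bilinear form $\Sigma$. More precisely, if we reflect a rigid overlay, each comparison point of the four types in Figure \ref{fig:Calculation-of-sigma} between $\mu$ and $\nu$ becomes a comparison point of the corresponding reflected type between $\nu$ and $\mu$, so $\Sigma_{\nu_1'}(\nu_2') = \Sigma_{\nu_2}(\nu_1) = 0$ and $\Sigma_{\nu_2'}(\nu_1') = \Sigma_{\nu_1}(\nu_2) = \sigma$. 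Thus the pair $(\nu_2',\nu_1')$ satisfies the hypotheses of Theorem \ref{thm:overlay with coefficients>1} (with $\nu_1$ there replaced by $\nu_2'$ and $\nu_2$ replaced by $\nu_1'$).

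Next, fix $n\in\mathbb{N}$ and apply Theorem \ref{thm:overlay with coefficients>1} to the reflected pair. This yields a rigid tree honeycomb $\widetilde{\mu}_n$ whose exit pattern is the same as that of $p_n\nu_2' + p_{n+1}\nu_1'$. Reflecting $\widetilde{\mu}_n$ back across the same horizontal line produces a rigid tree honeycomb $\mu_n'$ in $\mathcal{M}$, and its exit pattern is the reflection of the exit pattern of $p_n\nu_2' + p_{n+1}\nu_1'$, which is the same (up to the relabeling of rays induced by the reflection) as the exit pattern of $p_{n+1}\nu_1 + p_n\nu_2$.

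There is no real obstacle here beyond checking that the reflection in a line parallel to $w_1$ really does swap the two arguments of $\Sigma$; once that is in hand, the argument is a direct symmetry transfer. One could alternatively pass through the $\ast$-dual and appeal to the fact that $\mu$ is rigid if and only if $\mu^\ast$ is, but the reflection argument is the shortest and parallels Corollary \ref{cor:multiple of a rigid tree, mirror image} word for word.
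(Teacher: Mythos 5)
Your proposal is correct and follows exactly the paper's own proof: reflect $\nu_1,\nu_2$ in a line parallel to $w_1$ (which swaps the arguments of $\Sigma$, as the paper also uses in Corollary \ref{cor:pattern of nu1+sigma nu2 comes from rigid tree}), apply Theorem \ref{thm:overlay with coefficients>1} to the reflected pair, and reflect back. The extra detail you give about why reflection interchanges the two arguments of $\Sigma$ is a harmless elaboration of what the paper leaves implicit.
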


\begin{proof}
Reflection in a line parallel to $w_{1}$ changes the roles of $\nu_{1}$
and $\nu_{2}$. Apply Theorem\emph{ }\ref{thm:overlay with coefficients>1}
to these reflected honeycombs to get a honeycomb $\mu{}_{n}$. Finally,
reflect $\mu{}_{n}$ in a line parallel to $w_{1}$ to obtain $\mu'_{n}$.
\end{proof}
\begin{example}
The preceding corollary, applied to the first two overlays in Figure
\ref{fig:overlays with sigma=00003D2} shows that 
\begin{align*}
n,n,n+1|n+1,n,n|n+1,n,n\\
n+1,n+1,n|n,n+1,n+1|n,n+1,n+1\\
n,n+1,n|n,n,n+1|n,n+1,n\\
n+1,n,n+1|n+1,n+1,n|n+1,n,n+1
\end{align*}
are the exit patterns of rigid tree honeycombs. These four families
were already described in \cite[Section 8]{blt}. Similarly, the third
overlay yields the patterns 
\begin{align*}
n,n+1,n+1,n+1|n+1,n+1,n+1,n|2n+2,2n+1\\
n+1,n,n,n|n,n,n,n+1|2n,2n+1.
\end{align*}
 
\begin{figure}
\includegraphics[scale=0.35]{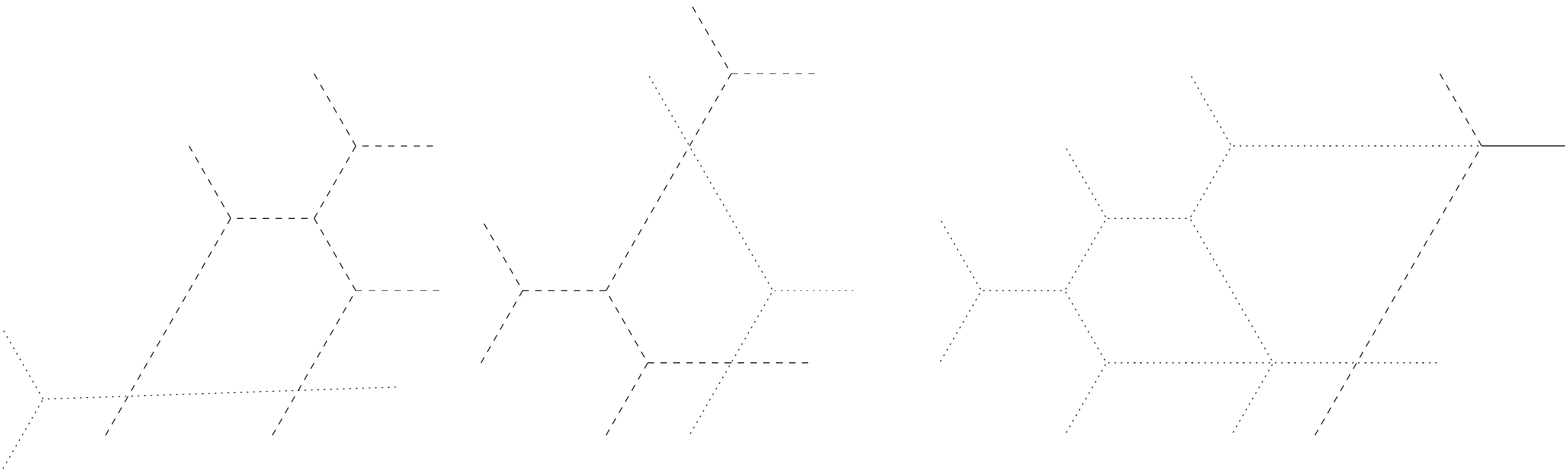}

\caption{\label{fig:overlays with sigma=00003D2}Three rigid overlays with
$\Sigma_{\nu_{1}}(\nu_{2})=2$}

\end{figure}
The overlays in Figure \ref{fig:overlays sigma=00003D3} satisfy $\Sigma_{\nu_{1}}(\nu_{2})=3$.
\begin{figure}
\includegraphics[scale=0.35]{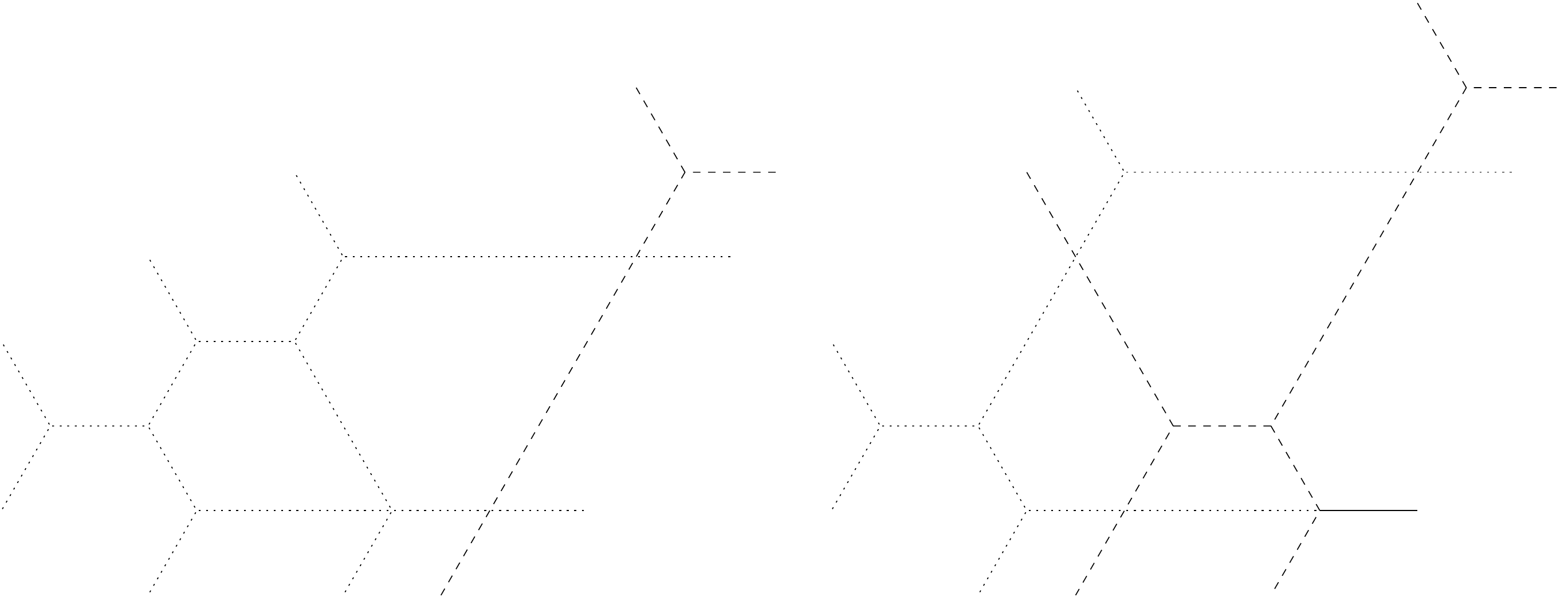}

\caption{\label{fig:overlays sigma=00003D3}Two rigid overlays with $\Sigma_{\nu_{1}}(\nu_{2})=3$}

\end{figure}
 We deduce that 
\begin{align*}
p_{n},p_{n+1},p_{n+1},p_{n+1}|p_{n+1},p_{n+1},p_{n+1},p_{n}|2p_{n+1},p_{n+1},p_{n}\\
p_{n+1},p_{n},p_{n},p_{n}|p_{n},p_{n},p_{n},p_{n+1}|2p_{n},p_{n},p_{n+1}\\
p_{n},p_{n+1},p_{n},p_{n+1}|p_{n+1},p_{n+1},p_{n},p_{n}|p_{n}+p_{n+1},p_{n+1},p_{n} & \text{}\\
p_{n+1},p_{n},p_{n+1},p_{n}|p_{n},p_{n},p_{n+1},p_{n+1}|p_{n}+p_{n+1},p_{n},p_{n+1}
\end{align*}
 are the exit patterns of rigid tree honeycombs, where the sequence
$\{p_{n}\}_{n=0}^{\infty}=\{0,3,8,21,55,144,\dots\}$ is defined by
(\ref{eq:the Pell sequence defined}) with $\sigma=3$.
\end{example}

\begin{rem}
\label{rem:bad conjecture} The preceding two results (combined with
Theorem \ref{thm:basic inductive construction} for $\sigma=1$) could
be paraphrased as follows. Suppose that $\nu$ is a rigid honeycomb
with ${\rm root}(\nu)=2$. If $\Sigma_{\nu}(\nu)=-1$, then there
exists a rigid tree honeycomb $\mu$ that has the same exit pattern
as $\nu$. One may wonder whether one can remove the restriction on
${\rm root}(\nu).$ The answer is negative. If $\nu_{1},\nu_{2}$,
and $\nu_{3}$ are three rigid honeycombs of weight $1$ forming a
clockwise overlay (see Figure \ref{fig:overlay of 3}),
\begin{figure}
\begin{picture}(100,100)
\put(0,0){\includegraphics[scale=.4]{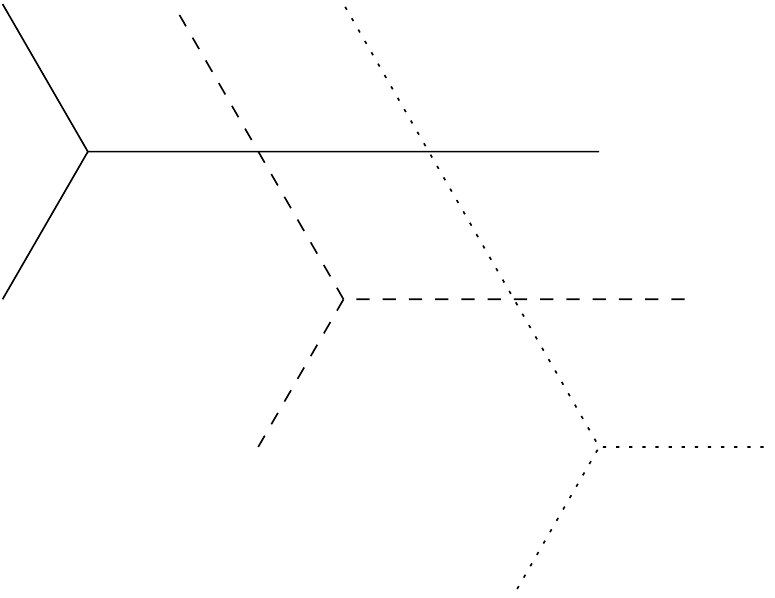}}
{
\put(-3,49){$\nu_1$}

\put(27,31){$\nu_2$}
\put(56,14){$\nu_3$}

}
\end{picture}

\caption{\label{fig:overlay of 3}$\Sigma_{\nu_{j}}(\nu_{i})=0$ for $i<j$
and $\Sigma_{\nu_{j}}(\nu_{i})=1$ for $i>j$}

\end{figure}
 then $\nu=c_{1}\nu_{1}+c_{2}\nu_{2}+c_{3}\nu_{3}$, $c_{1},c_{2},c_{3}\in\mathbb{N},$
satisfies $\Sigma_{\nu}(\nu)=-1$ precisely when.
\[
c_{1}^{2}+c_{2}^{2}+c_{3}^{2}-c_{1}c_{2}-c_{2}c_{3}-c_{1}c_{3}=1.
\]
Rewriting this equation as
\[
(c_{1}-c_{2})^{2}+(c_{2}-c_{3})^{2}+(c_{1}-c_{3})^{2}=2,
\]
we see that, for each positive solution $(c_{1},c_{2},c_{3})$, two
of the $c_{j}$ must be equal and differ from the third by $1$. Among
the resulting solutions, it seen that $n\nu_{1}+(n+1)\nu_{2}+n\nu_{3}$
and $(n+1)\nu_{1}+n\nu_{2}+(n+1)\nu_{3}$ are not the exit patterns
of rigid tree honeycombs for any $n\in\mathbb{N}$. The other solutions
are the exit patterns of rigid tree honeycombs, as seen from Example
\ref{exa:examples of n times riigid} below.
\end{rem}

\begin{thm}
\label{thm:higher multiples of a rigid tree}Suppose that $\mu$ is
a rigid tree honeycomb and that its nonzero exit multiplicities, listed
in counterclockwise order as $\alpha_{1},\dots,\alpha_{k}$ are such
that $\alpha_{1}=\sigma>1$. Let $\{p_{n}\}_{n=0}^{\infty}$ be the
sequence defined by \emph{(\ref{eq:the Pell sequence defined})}.
Then, for every $n\ge2$, there exists a rigid tree honeycomb $\mu_{n}$
such that:
\begin{enumerate}
\item The exit multiplicities of $\mu_{n}$, listed in counterclockwise
order, are 
\[
p_{n+1},p_{n-1},p_{n}\alpha_{2},\dots,p_{n}\alpha_{k}.
\]
 In other words, $\mu_{n}$ has the same exit pattern as $p_{n}\mu$,
except that $p_{n}\alpha_{1}$ is replaced by $p_{n+1}$ and $p_{n-1}$.
\item $\mu_{n}$ is compatible with the puzzle of $p_{n-1}\mu$.
\end{enumerate}
\end{thm}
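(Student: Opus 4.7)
The plan is to proceed by induction on $n \geq 2$, following the pattern of Theorem \ref{thm:overlay with coefficients>1}, with the single extreme honeycomb $\mu$ playing the role that the overlay $\nu_1 + \nu_2$ played there. The splitting of the ray of multiplicity $\alpha_1 = \sigma$ into two rays of multiplicities $p_{n+1}$ and $p_{n-1}$ records the data that was carried by the two summands in the earlier theorem. For the base case $n = 2$, I would apply Proposition \ref{prop:multiple of a rigid tree, original version} directly to $\mu$: the rigid tree honeycomb $\mu_2$ supported in the puzzle edges of $\mu$ and rooted at the incoming ray in the inflation of the ray of multiplicity $\alpha_1 = \sigma$ has exit multiplicities $\sigma^2 - 1, 1, \sigma \alpha_2, \ldots, \sigma \alpha_k$, which is precisely $p_3, p_1, p_2\alpha_2, \ldots, p_2\alpha_k$. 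Its support lies entirely in the boundary of the dark gray parallelograms of the puzzle of $\mu$, so compatibility with the puzzle of $p_1 \mu = \mu$ is automatic.

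For the inductive step, assuming $\mu_n$ has been built, I would set $\nu = p_{n-1}\mu$ and first identify $\nu' := (\mu_n)_\nu$. Since $\nu'$ is rigid, has support contained in the support of $\mu$ (which is extreme), and has weight $\omega(\mu_n) = p_n\omega(\mu)$ (computed from the inductive exit pattern), it must equal $p_n\mu$. Next I would apply the inflation-deflation construction of Proposition \ref{prop:inflation-deflation construction} to the data $(\nu, \mu_n, f)$, where $f$ is the incoming ray in the inflation (in the puzzle of $\mu_n$) of a carefully chosen ray $e_0$ of $\mu_n$. This produces a rigid honeycomb $\mu'$ compatible with the puzzle of $p_n\mu$ and assigning unit multiplicity to the incoming ray in the inflation of $e_0$ in the puzzle of $p_n\mu$. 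Let $\rho$ denote the rigid tree honeycomb supported in the puzzle edges of $p_n\mu$ and rooted at this incoming ray (existence by Theorem \ref{thm:honey in the boundary of puzzles}). Lemma \ref{lem:compatible puzzle does not leave the edges of the puzzle once there} then gives a rigid honeycomb $\mu''$ with $\mu' = \rho + \mu''$; comparing a single exit multiplicity shows $\mu'' \neq 0$, and equation (\ref{eq:roots and exits of degeneration}) applied to the degeneration $\mu' \to p_n\mu$ (which preserves $\mathrm{exit}$) forces $\mathrm{root}(\mu') = 2$. Hence $\mu'' = \gamma \mu_{n+1}$ for a rigid tree honeycomb $\mu_{n+1}$ and an integer $\gamma > 0$.

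What remains is to verify that $\mu_{n+1}$ has the announced exit pattern. I would compute the multiplicity $\mu'$ assigns to each outgoing ray in the puzzle of $p_n\mu$ using the exit pattern of $\widetilde{\mu_n}$ from Proposition \ref{prop:multiple of a rigid tree, original version}, subtract the contribution of $\rho$ (determined as in Example \ref{exa:rigid overlay of two} by the descendance structure in the puzzle of $p_n\mu$), and divide by $\gamma$. The Pell identities $p_{n+1}^2 - 1 = p_n p_{n+2}$ and $p_n p_{n+1} - \sigma = p_{n-1} p_{n+2}$, namely equations (\ref{eq:p square -1}) and (\ref{eq:p_np_n+2-sigma}), then telescope the resulting expressions into the claimed multiplicities $p_{n+1}\alpha_j$ for $j \geq 2$, together with the split multiplicities $p_{n+2}$ and $p_n$. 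Compatibility of $\mu_{n+1} = (\mu' - \rho)/\gamma$ with the puzzle of $p_n\mu$ is inherited from $\mu'$ and $\rho$. The hard part will be the bookkeeping in this last computation --- specifically, choosing $e_0$ so that $\gamma \neq 0$ and both halves of the split ray acquire nonzero multiplicities in $\mu_{n+1}$, and verifying the term-by-term match between the exit formulas of Proposition \ref{prop:multiple of a rigid tree, original version} and the descendance structure of $\rho$ via the Pell identities.
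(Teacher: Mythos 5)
Your base case is right and matches the paper (the honeycomb supported in the puzzle edges of $\mu$, rooted at the incoming ray over the $\alpha_{1}$-ray, has exit pattern $p_{3},p_{1},p_{2}\alpha_{2},\dots,p_{2}\alpha_{k}$). The inductive step, however, transplants the endgame of Theorem \ref{thm:overlay with coefficients>1} into a situation where it fails. The key quantitative facts here are ${\rm root}(\mu'_{\nu'})={\rm root}(p_{n}p_{n+1}\mu)=1$ and ${\rm exit}(\mu')={\rm exit}(\mu'_{\nu'})+1$: the deflation does \emph{not} preserve ${\rm exit}$, contrary to what you assert. Feeding these into (\ref{eq:roots and exits of degeneration}) gives $1-{\rm root}(\mu')=[{\rm root}(\mu'^{*})-{\rm root}(\mu_{\nu'}^{\prime*})]-1\ge0$, hence ${\rm root}(\mu')=1$, not $2$. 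So $\mu'$ is already extreme and one simply sets $\mu_{n+1}=\mu'/p_{n}$; there is no decomposition $\mu'=\rho+\gamma\mu_{n+1}$ to perform. Indeed such a decomposition is impossible: an extreme $\mu'$ dominating the tree honeycomb $\rho$ would force $\rho$ to be proportional to $\mu'$, but $\rho$ is supported in the puzzle edges of $p_{n}\mu$ while $\mu'$ has support inside the white pieces.

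The step you are missing is the one that produces both the extremality and the split multiplicities. The paper carries an extra inductive hypothesis --- that the rays of $\mu_{n}$ are the outgoing rays of the puzzle of $p_{n-1}\mu$ plus exactly one additional ray sitting inside the inflation of $e_{0}$ --- and applies the construction with $f$ equal to the \emph{outgoing} ray of $\mu_{n}$ in that inflation (a ray of $\mu_{n}$ of multiplicity $p_{n+1}$), not an incoming ray of a puzzle. Then $\widetilde{\mu}$ has three consecutive exit multiplicities $p_{n+1}^{2}-1,\,1,\,p_{n+1}p_{n-1}$, and the partial deflation of step (iii) removes precisely the dark gray strip separating the last two, merging them into $1+p_{n-1}p_{n+1}=p_{n}^{2}$ while $p_{n+1}^{2}-1=p_{n}p_{n+2}$ by (\ref{eq:p square -1}). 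This collapse is simultaneously the reason ${\rm exit}$ drops by one under deflation (forcing ${\rm root}(\mu')=1$) and the source of the multiplicities $p_{n+2}$ and $p_{n}$ of $\mu_{n+1}=\mu'/p_{n}$. Without tracking where the extra ray of $\mu_{n}$ sits, you cannot identify which pair of exit multiplicities merges, and the Pell identities you invoke ((\ref{eq:p square -1}) and (\ref{eq:p_np_n+2-sigma}), the latter of which belongs to the proof of Theorem \ref{thm:overlay with coefficients>1}, not this one) have nothing to telescope.
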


\begin{proof}
Let $e_{0}$ be the ray in the support of $\mu$ corresponding to
the multiplicity $\alpha_{0}$. We prove the existence of $\mu_{n}$
satisfying the following additional property: the rays in the support
of $\mu_{n}$ are:
\begin{enumerate}
\item [(i)]the outgoing rays in the puzzle of $p_{n-1}\mu$, and
\item [(ii)]one other ray contained in inflation of $e_{0}$.
\end{enumerate}
The existence of $\mu_{2}$ follows from Theorem \ref{thm:honey in the boundary of puzzles}.
In this case, the additional ray in (ii) is the incoming ray in the
inflation of $e_{0}$. Suppose that $\mu_{n}$ has been constructed
for some $n\ge2$. We apply the basic construction with $p_{n-1}\mu$
in place of $\nu$ and $\mu_{n}$ in place of $\mu$ and with the
outgoing ray in the inflation of $e_{0}$ in place of $f$. The important
observation is that (using notation from part (iii) of the construction,
is that $\widetilde{\mu}$ has consecutive rays with multiplicities
$p_{n+1}^{2}-1,$$1$, and $p_{n+1}p_{n-1}$, and that the rays with
densities $1$ and $p_{n-1}p_{n+1}$ are only separated by a dark
gray strip (a translated part of the inflation of $e_{0}$). It follows
that these three exit multiplicities collapse to two exit multiplicities
$p_{n+1}^{2}-1$ and $p_{n-1}p_{n+1}+1$ of $\mu'$. Since 
\begin{align*}
p_{n+1}^{2}-1 & =p_{n}p_{n+2},\\
p_{n-1}p_{n+1}+1 & =p_{n}p_{n},
\end{align*}
it suffices to show that $\mu'$ is extreme, in which case $\mu_{n+1}=\mu'/p_{n}$
satisfies the conclusion of the theorem with $n+1$ in place of $n$.
To see this, we note that $\nu'=p_{n}\mu$ and $\mu'_{\nu'}=p_{n}p_{n+1}\mu$,
so ${\rm root}(\mu'_{\nu'})=1$ and ${\rm exit}(\mu'_{\nu'})=k$.
An application of (\ref{eq:roots and exits of degeneration}) shows
that
\begin{align*}
1-{\rm root}(\mu') & ={\rm root}(\mu'_{\nu'})-{\rm root}(\mu')\\
 & =[{\rm root}(\mu'^{*})-{\rm root}(\mu_{\nu'}^{\prime*})]-[{\rm exit}(\mu')-{\rm exit}(\mu'_{\nu'})]\\
 & =[{\rm root}(\mu'^{*})-{\rm root}(\mu_{\nu'}^{\prime*})]-1.
\end{align*}
Since ${\rm root}(\mu')\ge1$ and ${\rm root}(\mu'^{*})-{\rm root}(\mu_{\nu'}^{\prime*})>0$,
we conclude that ${\rm root}(\mu')=1$, as desired.
\end{proof}
A reflection argument, like the one used in the proof of Corollary
\ref{thm:overlay with coefficients>1}, immediately yields the following
result.
\begin{cor}
\label{cor:multiple of tree reversed}Under the hypothesis of Theorem
\emph{\ref{thm:higher multiples of a rigid tree}}, for every $n\ge2$
there exists a rigid tree honeycomb $\mu_{n}'$ such that the exit
multiplicities of $\mu_{n}'$, listed in counterclockwise order, are
\[
p_{n-1},p_{n+1},p_{n}\alpha_{2},\dots,p_{n}\alpha_{k}.
\]
\end{cor}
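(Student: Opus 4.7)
The plan is to apply a reflection argument, following the same template as the proof of Corollary \ref{cor:overlay with coef>1 via symmetry}. The key observation is that reflecting a rigid tree honeycomb in a line parallel to $w_{1}$ yields another rigid tree honeycomb (reflection is an isometry that sends $\mathcal{M}$ to itself, preserves the tree structure, and preserves the conditions of Theorem \ref{thm:rigid immersion}), and that this reflection reverses the counterclockwise cyclic order of the exit multiplicities while preserving the distinguished ray of multiplicity $\alpha_{1}=\sigma$ (up to a change in the direction of that ray, which is irrelevant for the argument).

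Specifically, let $\nu$ be the reflection of $\mu$ in a line parallel to $w_{1}$. Then $\nu$ is a rigid tree honeycomb whose exit multiplicities, in counterclockwise order and with the cyclic sequence started at the ray corresponding to $\alpha_{1}$, are
\[
\sigma,\alpha_{k},\alpha_{k-1},\dots,\alpha_{2}.
\]
Applying Theorem \ref{thm:higher multiples of a rigid tree} to $\nu$ produces, for each $n\ge 2$, a rigid tree honeycomb $\widetilde{\mu}_{n}$ whose exit multiplicities in counterclockwise order are
\[
p_{n+1},\,p_{n-1},\,p_{n}\alpha_{k},\,p_{n}\alpha_{k-1},\,\dots,\,p_{n}\alpha_{2}.
\]

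I would then let $\mu_{n}'$ be the reflection of $\widetilde{\mu}_{n}$ in a line parallel to $w_{1}$. This is once more a rigid tree honeycomb, and the second reflection again reverses the counterclockwise cyclic order, so reading the exit multiplicities of $\mu_{n}'$ counterclockwise and starting the cyclic sequence at the appropriate ray gives
\[
p_{n-1},\,p_{n+1},\,p_{n}\alpha_{2},\,p_{n}\alpha_{3},\,\dots,\,p_{n}\alpha_{k},
\]
exactly as required. There is no genuine obstacle: preservation of rigidity and of the tree structure under reflection is automatic, and the only thing to track is the orientation-reversal of the cyclic order, which happens twice and therefore restores the correct listing of multiplicities after both reflections.
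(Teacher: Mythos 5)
Your proposal is correct and is exactly the paper's argument: the paper disposes of this corollary with the one-line remark that ``a reflection argument, like the one used in the proof of Corollary \ref{cor:multiple of a rigid tree, mirror image}, immediately yields the result,'' and your write-up simply spells out that reflection in a line parallel to $w_{1}$, application of Theorem \ref{thm:higher multiples of a rigid tree}, and a second reflection. The bookkeeping of the reversed counterclockwise order is handled correctly.
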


\begin{example}
\label{exa:examples of n times riigid}An application of Theorem \ref{thm:higher multiples of a rigid tree}
and Corollary \ref{cor:multiple of tree reversed} to the rigid tree
honeycomb with exit pattern $1,1,1|2,1|1,1,1$ shows that 
\begin{align*}
n,n,n|n+1,n-1,n|n,n,n\\
n,n,n|n-1,n+1,n|n,n,n
\end{align*}
 are the exit patterns of rigid tree honeycombs. These examples were
first described in \cite[Section 8]{blt}. Another rigid tree honeycomb
has exit pattern $2,2,1|1,3,1|1,1,2,1$. If we apply the results above
with the first $2$ in the role of $\alpha_{1}$, we deduce that 
\begin{align*}
n-1,n+1,2n,n|n,3n,n|n,n,2n,n\\
n+1,n-1,2n,n|n,3n,n|n,n,2n,n
\end{align*}
 are the exit patterns of rigid tree measures. The tree honeycombs
in the first series are represented in Figure \ref{fig:new-series}
\begin{figure}
\includegraphics[scale=0.2]{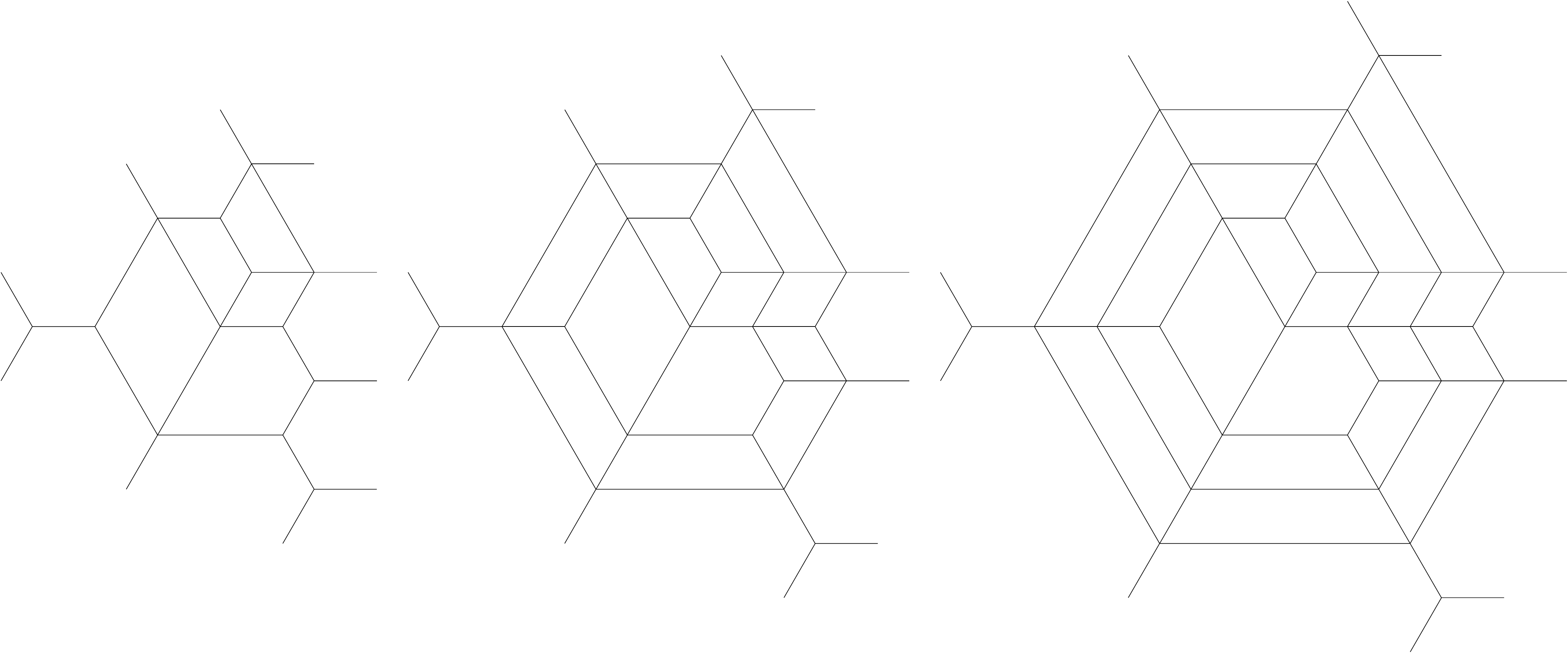}

\caption{\label{fig:new-series}Rigid tree honeycombs with exit pattern $n-1,n+1,2n,n|n,3n,n|n,n,2n,n$,
$n=1,2,3$}

\end{figure}
 for $n=1,2,3$. It is easy to see how further honeycombs in this
series are constructed. The results can also be applied with $3$
in place of $\alpha_{1}$ to generate the exit patterns 
\begin{align*}
2p_{n},2p_{n},p_{n}|p_{n},p_{n+1},p_{n-1},p_{n}|p_{n},p_{n},2p_{n},p_{n},\\
2p_{n},2p_{n},p_{n}|p_{n},p_{n-1},p_{n+1},p_{n}|p_{n},p_{n},2p_{n},p_{n},
\end{align*}
where the numbers $p_{n}$ come from the sequence $0,3,8,21,\dots$.
\end{example}

The following result is extracted from the proof of Theorem \ref{thm:higher multiples of a rigid tree}.
The second part follows from the first using reflection.
\begin{cor}
\label{cor:double multiple}Under the hypothesis of Theorem \emph{\ref{thm:higher multiples of a rigid tree}},
for every $n\ge2$ there exists a rigid tree honeycomb $\tau_{n}$
such that the exit multiplicities of $\tau_{n}$, listed in counterclockwise
order, are 
\[
p_{n+1}^{2}-1,1,p_{n}^{2}-1,p_{n}p_{n+1}\alpha_{2},\dots,p_{n}p_{n+1}\alpha_{k}.
\]
 Similarly, there exists a rigid tree measure $\tau_{n}'$ such that
the exit multiplicities of $\tau_{n}'$, listed in counterclockwise
order, are 
\[
p_{n}^{2}-1,1,p_{n+1}^{2}-1,p_{n}\alpha_{2},\dots,p_{n}\alpha_{k}.
\]
\end{cor}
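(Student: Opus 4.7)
The plan is to read $\tau_n$ directly off an intermediate object that already appears in the proof of Theorem \ref{thm:higher multiples of a rigid tree}, and to obtain $\tau_n'$ by the standard reflection trick used for Corollary \ref{cor:multiple of tree reversed}.

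Recall that the inductive step in the proof of Theorem \ref{thm:higher multiples of a rigid tree} builds $\mu_{n+1}$ from $\mu_n$ by applying the basic construction with $\nu=p_{n-1}\mu$, with $\mu_n$ in the role of the compatible honeycomb, and with $f$ equal to the outgoing ray in the inflation of $e_0$. Step (ii) of that construction produces a rigid tree honeycomb $\widetilde{\mu}$ supported in the union of the puzzle edges of $\mu_n$ and rooted at the incoming ray in the inflation of $f$. The point is that, by Theorem \ref{thm:honey in the boundary of puzzles}, $\widetilde{\mu}$ is already a rigid tree honeycomb in its own right, so nothing further needs to be built to produce $\tau_n$: I would simply set $\tau_n:=\widetilde{\mu}$.

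It then remains only to read off the exit pattern. Since $\mu_n$ is a rigid tree honeycomb whose nonzero exit multiplicities in counterclockwise order are $p_{n+1},p_{n-1},p_n\alpha_2,\dots,p_n\alpha_k$ and since $f$ is the ray carrying multiplicity $p_{n+1}$, Proposition \ref{prop:multiple of a rigid tree, original version} applies verbatim and gives the exit multiplicities of $\widetilde{\mu}$ (in counterclockwise order) as
\[
p_{n+1}^{2}-1,\ 1,\ p_{n+1}p_{n-1},\ p_{n+1}p_{n}\alpha_{2},\ \dots,\ p_{n+1}p_{n}\alpha_{k}.
\]
Shifting the index in the identity (\ref{eq:p square -1}) down by one yields $p_{n+1}p_{n-1}=p_{n}^{2}-1$, and substituting gives exactly the pattern claimed for $\tau_n$.

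For $\tau_n'$, I would reflect as in Corollary \ref{cor:multiple of tree reversed}: let $\bar{\mu}$ be the reflection of $\mu$ in a line parallel to $w_1$, so $\bar\mu$ is a rigid tree honeycomb whose exit multiplicities in counterclockwise order are $\alpha_1,\alpha_k,\alpha_{k-1},\dots,\alpha_2$. Applying the first part of the corollary to $\bar\mu$ produces a rigid tree honeycomb whose exit pattern (counterclockwise) is $p_{n+1}^{2}-1,\,1,\,p_{n}^{2}-1,\,p_{n}p_{n+1}\alpha_{k},\dots,p_{n}p_{n+1}\alpha_{2}$; reflecting that honeycomb back in the same line yields $\tau_n'$ with the desired pattern. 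I do not anticipate any real obstacle: the entire argument reduces to noticing that the intermediate object $\widetilde{\mu}$ of the proof of Theorem \ref{thm:higher multiples of a rigid tree} is itself the sought-after honeycomb, together with the one-line arithmetic identity $p_{n+1}p_{n-1}=p_n^{2}-1$ and the standard reflection manoeuvre.
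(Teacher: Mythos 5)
Your proposal is correct and is essentially the paper's own argument: the paper states only that the result is ``extracted from the proof of Theorem \ref{thm:higher multiples of a rigid tree}'' with the second part obtained by reflection, and what is being extracted is precisely the intermediate honeycomb $\widetilde{\mu}$ of step (ii) of the construction, whose exit pattern you compute correctly via Proposition \ref{prop:multiple of a rigid tree, original version} together with the identity $p_{n-1}p_{n+1}=p_{n}^{2}-1$ from (\ref{eq:p square -1}). Note only that your reflection argument yields the tail $p_{n}p_{n+1}\alpha_{2},\dots,p_{n}p_{n+1}\alpha_{k}$ for $\tau_{n}'$ rather than the $p_{n}\alpha_{2},\dots,p_{n}\alpha_{k}$ printed in the statement; since the weight count and the use made of $\tau_{n}'$ in Proposition \ref{prop:two extreme simple degenerations} both require $\omega(\tau_{n}')=p_{n}p_{n+1}\omega(\mu)$, the printed tail is a typo and your version is the intended one.
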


The following result shows that all the possibilities described in
Theorem \ref{thm:at most two type (B) degenerations} actually arise.
\begin{prop}
\label{prop:two extreme simple degenerations}Each of the honeycombs
$\tau_{n}$ and $\tau'_{n}$ of Corollary\emph{ \ref{cor:double multiple}}
has two simple degenerations that are extreme honeycombs.
\end{prop}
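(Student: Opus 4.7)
I would treat $\tau_n$ in detail; the case of $\tau'_n$ then follows immediately by reflecting the entire argument in a line parallel to $w_1$. The two simple degenerations of $\tau_n$ I propose to exhibit are
\[
\mu^{(1)} := p_{n+1}\mu_n \quad\text{and}\quad \mu^{(2)} := p_n\mu_{n+1},
\]
where $\mu_n$ and $\mu_{n+1}$ are the rigid tree honeycombs supplied by Theorem \ref{thm:higher multiples of a rigid tree}. Both are automatically extreme (scalar multiples of rigid tree honeycombs) and distinct, since for $n\geq 2$ the honeycombs $\mu_n$ and $\mu_{n+1}$ have different exit patterns.

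The first step is a routine exit-pattern check. Multiplying the exits of $\mu_n$ by $p_{n+1}$ and using the identity $p_{n+1}p_{n-1} = p_n^2-1$ from \eqref{eq:p square -1}, one gets exits $p_{n+1}^2,\,p_n^2-1,\,p_np_{n+1}\alpha_2,\dots,p_np_{n+1}\alpha_k$, which is exactly the exit pattern of $\tau_n$ after merging the first two exits $p_{n+1}^2-1$ and $1$. Similarly, multiplying the exits of $\mu_{n+1}$ by $p_n$ and using $p_np_{n+2} = p_{n+1}^2-1$ gives exits $p_{n+1}^2-1,\,p_n^2,\,p_np_{n+1}\alpha_2,\dots,p_np_{n+1}\alpha_k$, matching $\tau_n$ after merging the second and third exits $1$ and $p_n^2-1$. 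In particular, both candidates have weight $p_np_{n+1}\omega(\mu) = \omega(\tau_n)$.

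The crux is to produce both candidates as actual degenerations of $\tau_n$. For $\mu^{(1)}$, comparing Corollary \ref{cor:double multiple} with Proposition \ref{prop:multiple of a rigid tree, original version} shows that $\tau_n$ coincides with the rigid tree honeycomb supported in the puzzle edges of $\mu_n$ and rooted in the incoming ray in the inflation of $\mu_n$'s exit ray of multiplicity $p_{n+1}$. Hence $\tau_n$ is compatible with the puzzle of $\mu_n$, the proof of Proposition \ref{prop:multiple of a rigid tree, original version} identifies $(\tau_n)_{\mu_n} = p_{n+1}\mu_n$, and \eqref{eq:mu* when mu is compatible with the puzzle of nu} yields
\[
\tau_n^* = (p_{n+1}\mu_n)^* + ((\tau_n)_{\mu_n^*})^*,
\]
so $(p_{n+1}\mu_n)^*\le\tau_n^*$ and $p_{n+1}\mu_n$ is a degeneration of $\tau_n$. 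For $\mu^{(2)}$, I would appeal to the inductive step in the proof of Theorem \ref{thm:higher multiples of a rigid tree}: there, $\tau_n$ is precisely the intermediate honeycomb $\widetilde{\mu}$ produced by the basic construction (Proposition \ref{prop:inflation-deflation construction}) applied with $\nu = p_{n-1}\mu$, $\mu = \mu_n$ and $f$ the outgoing ray in the inflation of $e_0$, and the subsequent dark-gray collapse produces $\mu' = p_n\mu_{n+1}$. The last paragraph of the proof of Proposition \ref{prop:inflation-deflation construction} explicitly asserts that $\mu'$ is a degeneration of $\widetilde{\mu}$, so $(p_n\mu_{n+1})^*\le\tau_n^*$ as well.

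Finally, to upgrade each ``degeneration'' to ``simple degeneration'' I would invoke the accounting identity \eqref{eq:roots and exits of degeneration}. Each of $\tau_n,\mu^{(1)},\mu^{(2)}$ has ${\rm root}=1$, and by \eqref{eq:exit(comp)=00003Dexit(nu)+1} together with the analogous property for $\mu_{n+1}$ we have ${\rm exit}(\tau_n) - {\rm exit}(\mu^{(i)}) = 1$ for $i=1,2$; so \eqref{eq:roots and exits of degeneration} and Theorem \ref{thm:exit(mu)=00003Droot(mu) etc} force ${\rm root}(\tau_n^*) - {\rm root}((\mu^{(i)})^*) = 1$, confirming that each $\mu^{(i)}$ is obtained by setting exactly one coefficient to zero in the extreme decomposition of $\tau_n^*$. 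The main obstacle I anticipate is the identification of $\tau_n$ as the intermediate $\widetilde{\mu}$ in the proof of Theorem \ref{thm:higher multiples of a rigid tree} needed for $\mu^{(2)}$: one must match the choice of root ray in that basic construction with the incoming ray in the inflation of the $p_{n+1}$-exit of $\mu_n$, and then verify carefully that the dark-gray collapse merges precisely the pair of rays with multiplicities $1$ and $p_{n-1}p_{n+1}$ into $p_n^2$, rather than creating some other merging.
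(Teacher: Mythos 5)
Your proposal matches the paper's own proof: both exhibit the same two candidates $p_{n+1}\mu_{n}$ and $p_{n}\mu_{n+1}$ and verify, via the identities $p_{n+1}p_{n-1}=p_{n}^{2}-1$ and $p_{n}p_{n+2}=p_{n+1}^{2}-1$, that their exit patterns arise from that of $\tau_{n}$ by merging two adjacent exit multiplicities, with the case of $\tau_{n}'$ handled by reflection. The extra justification you supply (compatibility with the puzzle of $\mu_{n}$, the identification of $\tau_{n}$ with the intermediate honeycomb $\widetilde{\mu}$, and the root/exit accounting) merely fills in what the paper declares immediate, so the approach is essentially identical.
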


\begin{proof}
Fix $n\ge2$ and consider the honeycomb $\tau_{n}$. We know from
the above results that there exist rigid tree honeycombs $\mu_{n}$
and $\mu_{n+1}$ with exit multiplicities
\[
p_{n+1},p_{n-1},p_{n}\alpha_{2},\dots,p_{n}\alpha_{k}
\]
and
\[
p_{n+2},p_{n},p_{n+1}\alpha_{2},\dots,p_{n+1}\alpha_{k},
\]
respectively. Thus the extreme rigid honeycombs $p_{n+1}\mu_{n}$
and $p_{n}\mu_{n+1}$ have exit multiplicities
\[
p_{n+1}^{2},p_{n+1}p_{n-1},p_{n+1}p_{n}\alpha_{2},\dots,p_{n+1}p_{n}\alpha_{k}
\]
and 
\[
p_{n}p_{n+2},p_{n}^{2},p_{n+1}p_{n}\alpha_{2},\dots,p_{n+1}p_{n}\alpha_{k},
\]
respectively. Since $p_{n+1}^{2}=p_{n+1}^{2}-1+1$, $p_{n+1}p_{n-1}=p_{n}^{2}-1$,
$p_{n}p_{n+2}=p_{n+1}^{2}-1,$ and $p_{n}^{2}=p_{n}^{2}-1+1$, we
see immediately that $p_{n+1}\mu_{n}$ and $p_{n}\mu_{n+1}$ are simple
degenerations of $\tau_{n}$. The case of $\tau'_{n}$ is treated
similarly.
\end{proof}
\begin{example}
The smallest illustration of Proposition \ref{prop:two extreme simple degenerations}
arises from the rigid tree honeycomb $\mu$ with exit pattern $1,1,1|2,1|1,1,1$.
For $n=2,$ we obtain the rigid tree honeycomb with exit pattern $6,6,6|3,1,8,6|6,6,6$.
Figure \ref{fig:double case (B)} 
\begin{figure}
\includegraphics[scale=0.4]{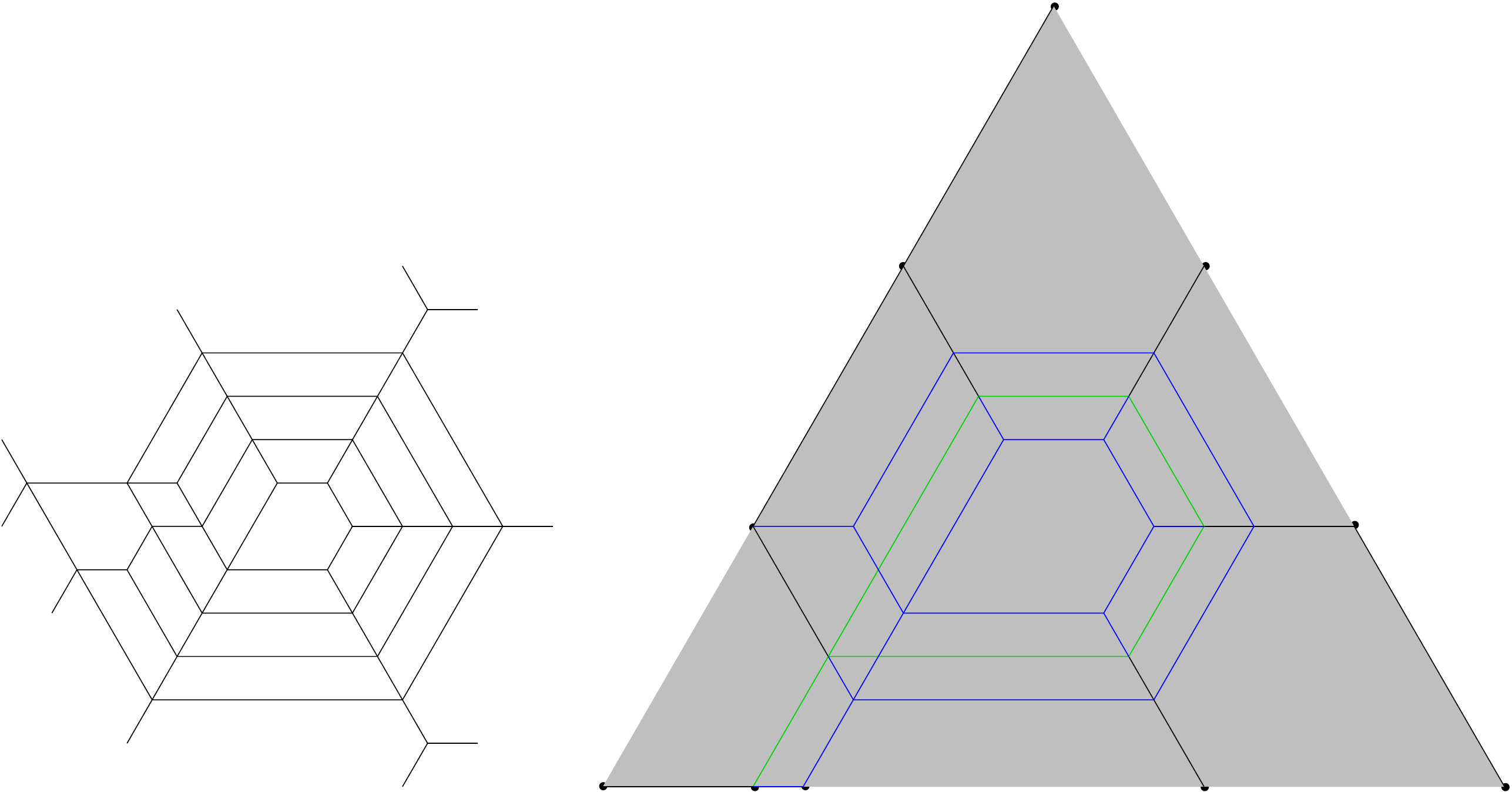}

\caption{\label{fig:double case (B)}A rigid tree honeycomb with exit pattern
$6,6,6|3,1,8,6|6,6,6$ and two extreme summands of its dual that yield
extreme simple degenerations}

\end{figure}
 shows the support of $\tau_{2}^{*}$ along with the supports of the
two extreme summands of $\tau_{2}^{*}$ whose elimination yields the
two extreme simple degenerations of $\tau_{2}$.
\end{example}

\begin{rem}
The simple degenerations $\nu$ of a rigid tree honeycomb fall into
three categories:
\begin{enumerate}
\item [(G)]$\sigma\nu_{1}+\nu_{2}$, where $\nu_{1}$ and $\nu_{2}$ are
distinct tree honeycombs and $\sigma$ is a positive integer. According
to Theorem \ref{thm:there exists a simple simple degeneration}, these
simple degenerations are generic: at most three simple degenerations
are not of this kind.
\item [(A)]$c_{1}\nu_{1}+c_{2}\nu_{2}$, where $\nu_{1}$ and $\nu_{2}$
are distinct tree honeycombs and $c_{1},c_{2}\ge2$ are integers.
\item [(B)]$c\nu$, where $\nu$ is a rigid tree measure and $c$ is a
positive integer.
\end{enumerate}
Thus, in addition to generic simple degenerations, a rigid tree measure
might have
\begin{enumerate}
\item no other degenerations,
\item one degeneration of type (B),
\item one degeneration of type (A),
\item two degenerations of type (B),
\item two degenerations of type (A),
\item one degeneration of type (B) and one of type (A),
\item two degenerations of type (B) and one of type (A),
\item two degenerations of type (A) and one of type (B),
\item three degenerations of type (B),
\item three degenerations of type (A).
\end{enumerate}
We have seen examples in the first four categories, and Theorem \ref{thm:at most two type (B) degenerations}
shows that (7) and (9) are impossible. Extensive experimentation has
not produced any rigid tree honeycombs in categories (5--10) and
we conjecture that no such examples exist.
\end{rem}


\begin{thebibliography}{10}
\bibitem{ab}C. Angiuli and H. Bercovici, The number of extremal components
of a rigid measure, \emph{J. Combin. Theory Ser. A} \textbf{118} (2011),
1925--1938.

\bibitem{bcdlt}H. Bercovici, B. Collins, K. Dykema, W. S. Li, and
D. Timotin, Intersections of Schubert varieties and eigenvalue inequalities
in an arbitrary finite factor, \emph{J. Funct. Anal. }\textbf{258}
(2010), 1579--1627.

\bibitem{blt}H. Bercovici, W. S. Li, and D. Timotin, A family of
reductions for Schubert intersection problems, \emph{J. Algebraic
Combin.} \textbf{33 }(2011), 609--649.

\bibitem{blTR}H. Bercovici, W. S. Li, and L. Truong, Extremal measures
and clockwise overlays, \emph{Discrete Math.} \textbf{315} (2014),
53--64.

\bibitem{danilov}V. I. Danilov and G. A. Koshevoy, Discrete convexity
and Hermitian matrices, \emph{Tr. Mat. Inst. Steklova} \textbf{241}
(2003), 68--89; translation in \emph{Proc. Steklov Inst. Math.} \textbf{241
}(2003), 58--78.

\bibitem{derksen}H. Derksen and J. Weyman, The combinatorics of quiver
representations, \emph{Ann. Inst. Fourier} (Grenoble) \textbf{61}
(2011), 1061--1131.

\bibitem{king-tollu}R. C. King, C. Tollu, and F. Christophe, Factorisation
of Littlewood-Richardson coefficients, \emph{J. Combin. Theory Ser.
A} \textbf{116} (2009), 314--333.

\bibitem{kt}A. Knutson and T. Tao, The honeycomb model of GL$_{n}(\mathbb{C})$
tensor products. I. Proof of the saturation conjecture, \emph{J. Amer.
Math. Soc.} \textbf{12} (1299), 1055--1090.

\bibitem{KTW}A. Knutson, T. Tao, and C. Woodward, The honeycomb model
of GL$_{n}(\mathbb{C})$ tensor products. II. Puzzles determine facets
of the Littlewood-Richardson cone, \emph{J. Amer. Math. Soc.} \textbf{17}
(2004),  19--48.

\bibitem{levq}W. J. LeVeque, \emph{Topics in number theory}, Vol.
1, Addison-Wesley Publishing Co., Inc., Reading, Mass., 1956.
\end{thebibliography}
\end{document}